\renewcommand{\tocsection}[3]{%
  \indentlabel{\@ifnotempty{#2}{\bfseries\ignorespaces#1 #2\quad}}\bfseries#3}
\renewcommand{\tocsubsection}[3]{%
  \indentlabel{\@ifnotempty{#2}{\ignorespaces#1 #2\quad}}#3}
\newcommand\@dotsep{4.5}
\def\@tocline#1#2#3#4#5#6#7{\relax
  \ifnum #1>\c@tocdepth 
  \else
    \par \addpenalty\@secpenalty\addvspace{#2}%
    \begingroup \hyphenpenalty\@M
    \@ifempty{#4}{%
      \@tempdima\csname r@tocindent\number#1\endcsname\relax
    }{%
      \@tempdima#4\relax
    }%
    \parindent\z@ \leftskip#3\relax \advance\leftskip\@tempdima\relax
    \rightskip\@pnumwidth plus1em \parfillskip-\@pnumwidth
    #5\leavevmode\hskip-\@tempdima{#6}\nobreak
    \leaders\hbox{$\m@th\mkern \@dotsep mu\hbox{.}\mkern \@dotsep mu$}\hfill
    \nobreak
    \hbox to\@pnumwidth{\@tocpagenum{\ifnum#1=1\bfseries\fi#7}}\par
    \nobreak
    \endgroup
  \fi}
\renewcommand\csname r@tocindent0\endcsname{0pt}
\def\l@subsection{\@tocline{2}{0pt}{2.5pc}{5pc}{}}
\patchcmd{\@setaddresses}{\indent}{\noindent}{}{}
\patchcmd{\@setaddresses}{\indent}{\noindent}{}{}
\patchcmd{\@setaddresses}{\indent}{\noindent}{}{}
\patchcmd{\@setaddresses}{\indent}{\noindent}{}{}
\newcommand{\mylabel}[2]{#2\def\@currentlabel{#2}\label{#1}}
\newcommand{\C}{\mathcal{C}}
\newcommand{\A}{\mathcal{A}}
\newcommand{\mU}{\mathcal{U}}
\newcommand{\mV}{\mathcal{V}}
\newcommand{\mC}{\mathcal{C}}
\newcommand{\mB}{\mathcal{B}}
\newcommand{\mA}{\mathcal{A}}
\newcommand{\F}{\mathbb{F}}
\newcommand{\N}{\mathbb{N}}
\newcommand{\Fq}{\F_{q}}
\newcommand{\Fm}{\F_{q^m}}
\newcommand{\Fmk}{[n,k]_{q^m/q}}
\newcommand{\Fmkd}{[n,k,d]_{q^m/q}}
\DeclareMathOperator{\GL}{GL}
\DeclareMathOperator{\Gal}{Gal}
\DeclareMathOperator{\rk}{rk}
\DeclareMathOperator{\wt}{wt}
\DeclareMathOperator{\Tr}{Tr}
\DeclareMathOperator{\dd}{d}
\DeclareMathOperator{\Rdef}{Rdef}
\DeclareMathOperator{\rowsp}{rowsp}
\theoremstyle{definition}
\newtheorem{theorem}{Theorem}[section]
\newtheorem{proposition}[theorem]{Proposition}
\newtheorem{corollary}[theorem]{Corollary}
\newtheorem{definition}[theorem]{Definition}
\newtheorem{example}[theorem]{Example}
\newtheorem{remark}[theorem]{Remark}
 \definecolor{light-gray}{gray}{0.90}
\newcommand{\St}{\,:\,}
\title[Evasive Subspaces, Generalized Rank Weights and near MRD codes]{Evasive Subspaces, Generalized Rank Weights \\ and near MRD codes}
\date{}
\author[G. Marino]{Giuseppe Marino}
\address{Giuseppe Marino, \textnormal{Department of Mathematics and Applications ``R. Caccioppoli'',
	University of Naples ``Federico II'',
	Via Cintia, Monte S.Angelo I-80126 Naples, Italy}}
\email{giuseppe.marino@unina.it}
\author[A. Neri]{Alessandro Neri}
\address{Alessandro Neri, \textnormal{Max-Planck-Institute for Mathematics in the Sciences, Inselstraße 22, 04103 Leipzig, Germany}}
\email{alessandro.neri@mis.mpg.de}
\author[R. Trombetti]{Rocco Trombetti}
\address{Rocco Trombetti, \textnormal{Department of Mathematics and Applications ``R. Caccioppoli'',
	University of Naples ``Federico II'',
	Via Cintia, Monte S.Angelo I-80126 Naples, Italy}}
\email{rtrombet@unina.it}
\subjclass[2020]{94B05; 51E20; 94B27; 11T71} 
\keywords{Rank-metric code;  Evasive subspace; Scattered subspace; Generalized rank weight; near MRD code; MRD code.}
\begin{document}

\maketitle

\begin{abstract}
    We revisit and extend the connections between $\Fm$-linear rank-metric codes and evasive $\Fq$-subspaces of $\Fm^k$. We give a unifying  framework in which we prove in an elementary way how the parameters of a rank-metric code are related to special geometric properties of the associated evasive subspace, with a particular focus on the generalized rank weights. In this way, we can also provide alternative and very short proofs of known results on  scattered subspaces. We then use this simplified point of view in order to get a geometric characterization of near MRD codes and a clear bound on their maximal length. Finally we connect the theory of quasi-MRD codes with $h$-scattered subspaces of maximum dimension, extending to all the parameters sets the already known results on MRD codes. 
\end{abstract}

\tableofcontents

\section{Introduction}
\noindent \textbf{Context.} Rank-metric codes were introduced by Delsarte in 1978 \cite{delsarte1978bilinear} and independently by Gabidulin in 1985 \cite{gabidulin1985theory}, but only recently the interest of  researchers increased exponentially due to their applications to network coding \cite{silva2008rank}. However, many other applications have emerged, such as crisscross error correction \cite{roth1991maximum}, code-based cryptography \cite{gabidulin1991ideals}, distributed storage \cite{rawat2013optimal} and space-time coding \cite{liu2002rank}, to name a few. 

Due to many of these applications, rank-metric codes are essentially always considered over finite fields, although they have been studied also over infinite fields \cite{augot2013rank,augot2021rankmetric}, with some interesting implications over Kummer extensions of the rationals \cite{robert2015new}. However, rank-metric codes possess interesting algebraic, geometric and combinatorial properties, which make them fascinating also for pure mathematicians. Indeed, their mathematical study has been progressing in parallel, and connections between rank-metric codes and other mathematical objects have emerged in the last decade. These connections include, but are not limited to, finite semifields \cite{sheekey2016new}, linear sets in finite geometry \cite{lunardon2017mrd}, $q$-analogues in combinatorics \cite{gorla2020rank} and tensors \cite{roth1996tensor}. 

Among the combinatorial and geometric features of a rank-metric code, it is worth to mention its \emph{generalized rank weights}. These invariants of a rank-metric code  generalize the notion of minimum rank distance, and form a strictly increasing sequence of integers (\emph{monotonicity}). Another interesting combinatorial property is that the generalized rank weights of an $\Fm$-linear rank-metric  code completely determine the generalized rank weights of the dual code (\emph{Wei-type duality}). However, generalized rank weights have shown to be important also from an application point of view. In \cite{kurihara2015relative}, it was shown that the security performance and the error correction capability of secure network coding are strongly dependent on the generalized rank weights of the underlying rank-metric code.

As it happens for linear codes in the Hamming metric, also rank-metric codes have a geometric equivalent interpretation. This point of view has been introduced in  detail in \cite{sheekey2019scattered,randrianarisoa2020geometric}, even though  partial correspondences were already highlighted in \cite{sheekey2016new} in some special cases. Formally, to a $k$-dimensional (as $\Fm$-vector space) rank-metric code $\C\subseteq \Fm^n$, one can associate an $n$-dimensional $\Fq$-subspace $\mU$ of $\Fm^k$, called \emph{$q$-system}. Such a space $\mU$ is  defined up to equivalence, and  can be  obtained by taking the $\Fq$-span of the columns of a generator matrix of $\C$. This provides a sort of dual interpretation, where the metric properties of the code can be translated into geometric properties of $\mU$. 

In this geometric direction, $q$-systems have been intensively investigated in recent years, in particular for what concerns $(h,r)$-\emph{evasive subspaces},
 which are $q$-systems $\mU$ with special intersection properties: Every $h$-dimensional $\Fm$-subspace of $\Fm^k$ intersects $\mU$ in an $\Fq$-subspace of $\Fq$-dimension at most $r$.
 Evasive subspaces are generalizations of \emph{$h$-scattered subspaces}, introduced and studied in \cite{blokhuis2000scattered} when $h=1$ and in \cite{csajbok2021generalising} for the general case.
 
 \bigskip
 
\noindent \textbf{Overview.}  In this paper, we first provide a clear and concise theory connecting the study of $q$-systems and the study of rank-metric codes. The main tool is to analyze them including also their {generalized rank weights}, which allow us to give a complete understanding of the parameters of a rank-metric codes and the evasive properties of the associated $q$-system. This is the case of Theorem \ref{thm:evasive_generalizedweights}, which is the main result unifying the theory of rank-metric codes with the one of $q$-systems. This can be read as follows: 
\medskip

\noindent 
\begin{center}Let $\C\subseteq \Fm^n$ be a $k$-dimensional code and $\mU$ be its associated $q$-system. Then 
$\mU$ is $(h,r)$-evasive \emph{if and only if} the $(k-h)$-th generalized rank weight of $\C$ is at least $n-r$ \emph{if and only if} the $(r-h+1)$-th generalized rank weight of $\C^\perp$ is at least $r+2$. 
\end{center}

\medskip

The rest of the paper is dedicated to use this connection in both directions exploiting the interplay between geometry and coding theory: we first obtain geometric results from well-known coding theoretic properties of rank metric codes  and their generalized rank weights, and then we use the geometry of $q$-systems to study new classes of rank-metric codes.
More specifically, we first derive a series of consequences and rediscoveries of geometric results on $h$-scattered subspaces and their links with MRD codes with the aid of Theorem \ref{thm:evasive_generalizedweights}. These are some of the main results presented in \cite{csajbok2021generalising,zini2021scattered,bartoli2021evasive}; see Theorem \ref{thm:hscattered_MRD} and Corollaries \ref{cor:hscattered_bound}, \ref{cor:evasive_bounds} and \ref{cor:maxscatterd_hyperplane_intersection} among the others.
Although these results are already known, the new proofs are significantly shorter and simplified. This has a lot to do with the Wei-type duality of the generalized rank weights of a rank-metric code and its dual.

At a later stage, we focus on near MRD codes: they are $k$-dimensional codes $\C\subseteq \Fm^n$  which are very close to be MRD, having minimum rank distance equal to $n-k$ and meeting the Singleton bound for all the remaining generalized rank weights. Here, we use the opposite direction of the connection,  exploiting the geometric perspective to characterize this family of codes and to derive constraints on their parameters. More precisely, in Theorem \ref{thm:nearMRD_characterization} we show that 
the parameters of a near MRD code satisfy
$$ m \geq k \quad \mbox{ and }  \quad   n \leq \begin{cases} m+2 & \mbox{ if } m=2k-2, \\
 m+1 & \mbox{ if }  m \neq 2k-2,
 \end{cases} $$
characterizing near MRD codes of length $m+2$.

We then conclude by highlighting the connection between quasi-MRD codes introduced in \cite{de2018weight} with the so-called \emph{quasi-maximum $h$-scattered subspaces}. The latter are $h$-scattered $q$-systems of maximum $\Fq$-dimension $n=\lfloor\frac{km}{h+1}\rfloor$, when $(h+1)$ does not divide $km$. This connection is explained in Theorem \ref{thm:quasimaximum_quasiMRD} and it extends the one between maximum $h$-scattered subspaces and MRD codes provided in \cite{zini2021scattered}.

\bigskip

\noindent \textbf{Organization.} The paper is structured as follows. Section \ref{sec:rankmetric} contains the preliminary notions and results about rank-metric codes, with a focus on generalized rank weights, and their geometric interpretation as $q$-systems. In Section \ref{sec:evasive_genweights} we introduce evasive subspaces, and show the main correspondence result (Theorem \ref{thm:evasive_generalizedweights}) between evasive subspaces and rank-metric codes with lower bounded generalized rank weights. Section \ref{sec:consequences} is dedicated to illustrate how we can derive several important geometric properties from this correspondence with substantially simplified arguments. The class of near MRD codes is deeply investigated in Section \ref{sec:nearMRD}, where we also derive bounds on their parameters. Finally, Section \ref{sec:quasi_quasi} explains the link between quasi-MRD codes and quasi-maximum $h$-scattered subspaces.
\bigskip

\noindent \textbf{Notation.} In the following, we fix $\Fq$ to be a finite field with $q$ elements, and $\Fm$ will be the degree $m$ extension of $\Fq$. Moreover, we use $k$ and $n$ to denote two positive integers with $k \leq n$.
Throughout the paper, vectors will be always considered as \textit{row vectors}, unless otherwise specified.

\section{Rank-metric codes}\label{sec:rankmetric}
Rank-metric codes over finite fields can be equivalently represented as sets of $n\times m$ matrices over $\Fq$ or as sets of vectors of length $n$ over $\Fm$. This is because the choice of an $\Fq$-basis of $\Fm$ only affects the matrix representation of vectors in $\Fm^n$, but not its rank. More precisely, different choices of $\Fq$-bases of $\Fm$ lead to equivalent representations of codes in $\Fq^{n\times m}$. However, things change when one considers the natural linearity of the underlying field.  In this paper, we will only consider rank-metric codes as $\Fm$-linear subspaces of $\Fm^n$.

\subsection{Definitions and properties}
On the space of vectors $\Fm^n$ over $\Fm$, we define the \textbf{rank weight} as 
 $$\wt_{\rk}(v)=\dim_{\Fq}(\langle v_1,\ldots,v_n\rangle_{\Fq}), \qquad \mbox{ for any } v=(v_1,\ldots,v_n) \in \Fm^n,$$
 and the \textbf{rank distance} between two vectors is defined as $\dd_{\rk}(u,v)\coloneqq\wt_{\rk}(u-v)$.

\begin{definition}
 An $\Fmkd$ \textbf{(rank-metric) code} $\mC$ is a $k$-dimensional $\Fm$-subspace of $\Fm^n$ equipped with the rank distance. The parameter $d$ is called the \textbf{minimum rank distance} and it is given by
 \begin{align*} d\coloneqq\dd_{\rk}(\mC)&= \min \{\dd_{\rk}(u,v) \St u,v\in\mC, u \neq v \} \\&=\min\{\wt_{\rk}(v) \St v \in \mC, v\neq 0 \}.
 \end{align*}
 When the parameter $d$ is not known or relevant, we will simply write that $\C$ is an $\Fmk$ code.
 
 A \textbf{generator matrix} for $\C$ is a matrix $G \in \Fm^{k\times n}$  such that 
 $$\C=\big\{ vG \St v \in \Fm^k \big\}. $$
\end{definition}

The following is a classical result relating the parameters of a rank-metric code. 

\begin{theorem}[Singleton Bound \textnormal{\cite{delsarte1978bilinear}}]
 Let $\C$ be an $\Fmkd$ code. Then
 \begin{equation}\label{eq:singleton} mk\leq \min\{m(n-d+1), n(m-d+1)\}.\end{equation}
\end{theorem}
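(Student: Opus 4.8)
The plan is to prove the two bounds $mk \leq m(n-d+1)$ and $mk \leq n(m-d+1)$ separately, each by a puncturing/shortening argument analogous to the classical Singleton bound in the Hamming metric, but carried out on both sides of the matrix representation because the rank metric is symmetric under transposition.

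First I would establish $mk \leq m(n-d+1)$, i.e. $k \leq n-d+1$. Take a generator matrix $G \in \Fm^{k \times n}$ of $\C$ and delete $d-1$ columns, obtaining a matrix $G' \in \Fm^{k \times (n-d+1)}$. The key claim is that $G'$ still has rank $k$ over $\Fm$: if not, there is a nonzero $v \in \Fm^k$ with $vG' = 0$, so the codeword $vG$ is supported on the $d-1$ deleted coordinates, hence $\wt_{\rk}(vG) \leq d-1$, contradicting $\dd_{\rk}(\C) = d$ (here I use that the rank weight of a vector is at most the number of its nonzero entries). Therefore $k \leq n-d+1$, which gives the first inequality after multiplying by $m$.

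For the second bound $mk \leq n(m-d+1)$, I would pass to the matrix representation: fixing an $\Fq$-basis of $\Fm$, each codeword $v \in \Fm^n$ corresponds to a matrix $M_v \in \Fq^{n \times m}$ whose $\Fq$-rank equals $\wt_{\rk}(v)$, and $\C$ becomes an $\Fq$-linear space $\mathcal{M} \subseteq \Fq^{n \times m}$ of $\Fq$-dimension $mk$ in which every nonzero matrix has rank at least $d$. Now delete $d-1$ columns from every matrix in $\mathcal{M}$; this is an $\Fq$-linear map $\mathcal{M} \to \Fq^{n \times (m-d+1)}$, and it is injective, since a nonzero matrix in its kernel would be supported on $d-1$ columns and hence have rank at most $d-1$. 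Comparing dimensions, $mk = \dim_{\Fq}\mathcal{M} \leq n(m-d+1)$.

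The argument is essentially routine; the only point requiring a little care — and the place I would spell out explicitly — is the two elementary facts that (i) the rank of a matrix does not exceed its number of nonzero columns (or rows), used to bound the rank weight of a "short-support" codeword, and (ii) that deleting columns is an $\Fm$-linear (resp. $\Fq$-linear) map whose injectivity on $\C$ (resp. on $\mathcal{M}$) follows directly from the minimum distance hypothesis. Combining the two inequalities yields $mk \leq \min\{m(n-d+1),\, n(m-d+1)\}$, which is \eqref{eq:singleton}.
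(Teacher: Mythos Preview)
Your argument is correct: the puncturing on the $\Fm^n$ side gives $k\le n-d+1$, and passing to the $\Fq^{n\times m}$ matrix model and deleting $d-1$ columns gives $mk\le n(m-d+1)$, with the injectivity in each case following exactly from the minimum-distance hypothesis as you explain. Note, however, that the paper does not supply its own proof of this statement; it is quoted as the classical Singleton Bound with a citation to Delsarte's original paper, so there is nothing to compare your approach against. What you have written is essentially the standard proof.
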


A code $\C$ is said to be \textbf{maximum rank distance (MRD)} if its parameters meet the bound in \eqref{eq:singleton} with equality. 

\begin{remark}[Existence of $\Fmk$ MRD codes]
Existence and constructions of $\Fmk$ MRD codes are known for many parameters sets. The first construction of $\Fmk$ MRD codes was provided already in the seminal papers by Delsarte \cite{delsarte1978bilinear} and Gabidulin \cite{gabidulin1985theory}, whenever $n\le m$. These codes are known today as \textbf{Delsarte-Gabidulin codes}. When $n\le m$, another construction was also provided more recently by Sheekey in \cite{sheekey2016new}. Thus, $\Fm$-linear  MRD codes exist for any parameter sets, when $n \leq m$.

For the case $n>m$, additional families can be obtained when $n=tm$ and $k=tk'$ by doing the direct sum of $t$ copies of an $[m,k',m-k'+1]_{q^m/q}$ MRD code. Recently, new constructions have been found geometrically in \cite{bartoli2018maximum,csajbok2017maximum}, for  any $k$, $m$ even and $n=\frac{mk}{2}$, giving $[\frac{mk}{2},k,m-1]_{q^m/q}$ MRD codes. Together with their dual, which are $[\frac{mk}{2},\frac{(m-2)k}{2},3]_{q^m/q}$ MRD codes, these are the only general constructions of $\Fm$-linear MRD codes known so far. 
We conclude mentioning two recent special constructions. A family of $[8,4,3]_{q^4/q}$ MRD codes was found in \cite{BMN}, when $q$ is an odd power of $2$, which cannot be obtained as the direct sum of two $[4,2,3]_{q^4/q}$ MRD codes. In \cite{BMN2}, a family of  $[8,4,4]_{q^6/q}$ MRD codes was constructed for $q=2^h$ and $h \equiv 1 \pmod 6$.
\end{remark}

\begin{definition}\label{def:dualcode}
 Let $\C$ be an  $\Fmkd$  code. Then its \textbf{dual code} $\C^\perp$ is given by
 $$ \C^\perp\coloneqq\left\{ v \in \Fm^n \St cv^\top=0, \, \forall\, c \in \C\right\},$$ where the symbol $v^{\top}$ denotes here the transpose of $v$.
\end{definition}

In this setting, we can consider the equivalence of codes. Let us only take those which are linear. In this case two $\Fmk$ codes $\mC_1$ and $\mC_2$ are \textbf{equivalent} if and only if there exist $A\in \GL(n,q)$ such that 
$$\C_1=\C_2\cdot A=\{vA \St v \in \mC_2\}.$$

We conclude with the notion of degeneracy of a rank-metric code. This can be defined in many equivalent ways. For $\Fmk$ codes we report two equivalent formulations taken from \cite{alfarano2021linear}.

\begin{proposition}[\textnormal{\cite[Proposition 3.2]{alfarano2021linear}}]\label{prop:nondegenerate}
Let $\C$ be an $\Fmk$ code. The following are equivalent.
\begin{enumerate}
    \item The $\F_q$-span of the columns of any generator matrix of $G$ has $\Fq$-dimension $n$.\footnote{Note that the $\Fq$-span of the columns of a  generator matrix of a code $\C$ depends on the choice of the generator matrix. However, its $\Fq$-dimension does not.}
    \item $\dd_{\rk}(\mC^\perp) \ge 2$.
\end{enumerate}
\end{proposition}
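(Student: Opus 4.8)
The plan is to establish the equivalence via the elementary description of rank-one vectors, proving the contrapositive statement that (1) fails precisely when (2) fails. Throughout I fix a generator matrix $G \in \Fm^{k\times n}$ of $\C$, with columns $g_1,\dots,g_n \in \Fm^k$.

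First I would recall the standard characterization of rank-one vectors: a nonzero $v \in \Fm^n$ satisfies $\wt_{\rk}(v)=1$ if and only if $v=\lambda w$ for some $\lambda \in \Fm^*$ and some $w \in \Fq^n\setminus\{0\}$. Indeed, $\wt_{\rk}(v)=1$ means that $\langle v_1,\dots,v_n\rangle_{\Fq}$ is a $1$-dimensional $\Fq$-subspace of $\Fm$, say $\Fq\lambda$ with $\lambda\neq 0$; then $v_i=w_i\lambda$ with $w_i\in\Fq$, and $w=(w_1,\dots,w_n)\neq 0$ because $v\neq 0$. The converse is immediate. Since $\C^\perp$ is an $\Fm$-subspace and $\lambda$ is invertible, $\lambda w \in \C^\perp$ if and only if $w\in\C^\perp$; moreover every nonzero $w\in\Fq^n$ has $\wt_{\rk}(w)=1$. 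Hence $\dd_{\rk}(\C^\perp)\leq 1$, that is the negation of (2), holds exactly when $\C^\perp$ contains a nonzero vector of $\Fq^n$.

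Next I would translate membership in $\C^\perp$ into a linear-algebra condition on $G$. Writing a generic codeword as $uG$ with $u\in\Fm^k$, the condition $uG\,v^\top=0$ for all $u\in\Fm^k$ is equivalent to $G\,v^\top=0$, so $\C^\perp=\{v\in\Fm^n : G\,v^\top=0\}$. For $w=(w_1,\dots,w_n)\in\Fq^n$, the equation $G\,w^\top=0$ is literally $\sum_{i=1}^n w_i g_i=0$, i.e.\ a (possibly trivial) $\Fq$-linear dependence among $g_1,\dots,g_n$. Combining with the previous paragraph, the negation of (2) is equivalent to the existence of a nontrivial $\Fq$-linear relation among the columns of $G$, that is, to $\dim_{\Fq}\langle g_1,\dots,g_n\rangle_{\Fq}<n$.

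It remains only to dispose of the word ``any'' in (1) by checking that this dimension is independent of the chosen generator matrix: any other generator matrix is $MG$ with $M\in\GL(k,q^m)$, and since $\sum_i w_i(Mg_i)=M\big(\sum_i w_i g_i\big)$ with $M$ invertible, a nontrivial $\Fq$-dependence among the columns of $MG$ exists if and only if one exists among the columns of $G$. Thus ``$\dim_{\Fq}\langle g_1,\dots,g_n\rangle_{\Fq}<n$'' is exactly the negation of (1), and chaining the equivalences yields (1)$\Leftrightarrow$(2). There is no genuinely hard step here; the only two points deserving care are the absorption of the scalar $\lambda$ using the $\Fm$-linearity of $\C^\perp$, and the identification of an $\Fq$-rational vector of $\ker G$ with an $\Fq$-linear relation among the columns of $G$.
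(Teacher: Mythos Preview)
Your argument is correct and complete. Note, however, that the paper does not actually give a proof of this proposition: it is stated with a citation to \cite[Proposition 3.2]{alfarano2021linear} and used as a preliminary result, so there is no ``paper's own proof'' to compare against. Your elementary argument via the characterization of rank-one vectors as $\Fm^*$-multiples of nonzero $\Fq$-rational vectors, combined with the identification $\C^\perp=\ker G$, is the standard way to see this equivalence and would serve perfectly well as the omitted proof.
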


\begin{definition}
 An $\Fmk$ code is \textbf{nondegenerate} if it satisfies any of the equivalent conditions in Proposition \ref{prop:nondegenerate}.
\end{definition}

\subsection{Generalized rank weights}

Generalized rank weights have been first introduced and studied by Kurihara-Matsumoto-Uyematsu \cite{kurihara2012new,kurihara2015relative}, Oggier-Sboui \cite{oggier2012existence} and Ducoat \cite{ducoat2015generalized}. They are the counterpart of generalized weights in Hamming metric and are of great  interest due to their combinatorial properties \cite{ravagnani2016generalized,jurrius2017defining,ghorpade2020polymatroid} and their applications to network coding \cite{martinez2016similarities,martinez2017relative}. We recall here the definition and their properties. 

Let $\mA \subseteq \Fm^n$ and $\theta \in \Gal(\Fm/\Fq)$, and denote by $\theta(\mA)$ the image of $\mA$ under the componentwise map $\theta$, that is 
$$\theta(\mA)\coloneqq \{(\theta(a_1),\ldots,\theta(a_n)) \St(a_1,\ldots,a_n) \in \mA\}.$$
We say that $\mA$ is \textbf{Galois closed} if  $\theta(\mA)=\mA$ for every $\theta \in \Gal(\Fm/\Fq)$. Observe that it is enough that $\theta(\mA)=\mA$ for a generator of $\Gal(\Fm/\Fq)$. For instance, we can just consider $\theta$ to be the $q$-Frobenius automorphism. We denote by $\Lambda_q(n,m)$ the set of Galois closed $\Fm$-subspaces of $\Fm^n$.

\begin{definition}
 Let $\C$ be an $\Fmkd$ code, and let $1\leq s \leq k$. The \textbf{$s$-th generalized rank weight} of $\C$ is the integer
 $$ \dd_{\rk,s}(\C)\coloneqq\min \{\dim_{\Fm}(\mA) \St \mA \in  \Lambda_q(n,m), \dim_{\Fm}(\mA\cap \C) \geq s\}.$$
\end{definition}

The following result recaps the two most important properties of generalized rank weights: the monotonicity and the Wei-type duality.

\begin{proposition}[\textnormal{\cite{kurihara2015relative,ducoat2015generalized}}]\label{prop:gen_weights_properties}
 Let $\mC$ be a nondegenerate $\Fmkd$ code. Then
 \begin{enumerate}
     \item (Monotonicity) $d=\dd_{\rk,1}(\mC)<\dd_{\rk,2}(\mC)<\ldots<\dd_{\rk,k}(\mC)=n$.
     \item (Wei-type duality) $\{\dd_{\rk,i}(\mC) \St 1\leq i \leq k\}\cup \{n+1-\dd_{\rk,i}(\mC^\perp) \St 1\leq i \leq n- k\}=\{1,\ldots,n\}$.
 \end{enumerate}
\end{proposition}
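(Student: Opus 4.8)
The plan is to split the statement into the monotonicity chain and the Wei-type duality, and to reduce both to a single \emph{Galois-descent lemma} plus elementary linear algebra. The lemma I would isolate first: the map $W\mapsto\langle W\rangle_{\Fm}$ is a dimension-preserving bijection from the $\Fq$-subspaces of $\Fq^n$ onto $\Lambda_q(n,m)$, with inverse $\mA\mapsto\mA\cap\Fq^n$; in particular the smallest Galois-closed subspace containing a vector $v$ has $\Fm$-dimension $\wt_{\rk}(v)$ (proved via a Moore-matrix computation after fixing an $\Fq$-basis of $\langle v_1,\dots,v_n\rangle_{\Fq}$). Granting this, everything else is bookkeeping with a single nondecreasing ``staircase'' counting function.

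For monotonicity I would argue as follows. Any $\mA\in\Lambda_q(n,m)$ with $\dim_{\Fm}(\mA\cap\C)\ge 1$ contains a nonzero $c\in\C$, hence contains the Galois closure of $c$, of dimension $\wt_{\rk}(c)\ge d$; taking $\mA$ to be the Galois closure of a minimum-weight codeword shows $\dd_{\rk,1}(\C)=d$. Nondegeneracy together with Proposition \ref{prop:nondegenerate} says $\C^\perp$ contains no nonzero vector of $\Fq^n$, so by the lemma the largest Galois-closed subspace contained in $\C^\perp$ is $0$; dualizing, the smallest Galois-closed subspace containing $\C$ is all of $\Fm^n$, giving $\dd_{\rk,k}(\C)\le n$, and the reverse inequality is trivial. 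For the strict inequalities, given $\mA\in\Lambda_q(n,m)$ of dimension $t$ with $\dim_{\Fm}(\mA\cap\C)\ge s+1\ge 2$, I write $\mA=\langle B\rangle_{\Fm}$ with $B=\mA\cap\Fq^n$ of $\Fq$-dimension $t\ge 2$; since the $\Fq$-hyperplanes of $B$ intersect trivially, some hyperplane $H$ fails to contain $\mA\cap\C$, and then $\mA'\coloneqq\langle H\rangle_{\Fm}\in\Lambda_q(n,m)$ has dimension $t-1$ and $\dim_{\Fm}(\mA'\cap\C)\ge\dim_{\Fm}(\mA\cap\C)-1\ge s$. Applying this to an $\mA$ attaining $\dd_{\rk,s+1}(\C)$ yields $\dd_{\rk,s}(\C)\le\dd_{\rk,s+1}(\C)-1$.

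For the Wei-type duality I would set $N_\C(t)\coloneqq\max\{\dim_{\Fm}(\mA\cap\C)\St\mA\in\Lambda_q(n,m),\ \dim_{\Fm}\mA=t\}$ for $0\le t\le n$. The same hyperplane/extension argument shows $N_\C$ is nondecreasing with consecutive increments in $\{0,1\}$, $N_\C(0)=0$, $N_\C(n)=k$, and $\dd_{\rk,s}(\C)=\min\{t\St N_\C(t)\ge s\}$; hence the set of jump points $\{t\in\{1,\dots,n\}\St N_\C(t)>N_\C(t-1)\}$ equals $\{\dd_{\rk,i}(\C)\St 1\le i\le k\}$, and likewise for $\C^\perp$ with $n-k$ jump points. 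The bridge is the elementary identity $\dim_{\Fm}(\mA\cap\C)-\dim_{\Fm}(\mA^\perp\cap\C^\perp)=k-n+\dim_{\Fm}\mA$, obtained by projecting $\C$ onto $\Fm^n/\mA$ and identifying $(\Fm^n/\mA)^{*}$ with $\mA^\perp$, together with the remark that $\mA^\perp\in\Lambda_q(n,m)$ whenever $\mA$ is, since $\theta(\mA)^\perp=\theta(\mA^\perp)$ for all $\theta\in\Gal(\Fm/\Fq)$. Maximizing over $\mA$ of dimension $t$, equivalently over $\mA^\perp$ of dimension $n-t$, gives $N_\C(t)=N_{\C^\perp}(n-t)+k-n+t$, hence $N_\C(t)-N_\C(t-1)=1-\bigl(N_{\C^\perp}(n-t+1)-N_{\C^\perp}(n-t)\bigr)$; since both differences lie in $\{0,1\}$, $t$ is a jump of $N_\C$ iff $n+1-t$ is not a jump of $N_{\C^\perp}$. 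As $t\mapsto n+1-t$ permutes $\{1,\dots,n\}$, this says exactly that $\{\dd_{\rk,i}(\C)\St 1\le i\le k\}$ and $\{n+1-\dd_{\rk,i}(\C^\perp)\St 1\le i\le n-k\}$ partition $\{1,\dots,n\}$.

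I expect the main obstacle to be the Galois-descent lemma: once it is in place (elementary and correct), it simultaneously supplies $\dd_{\rk,1}(\C)=d$, the existence of Galois-closed hyperplanes inside any Galois-closed subspace, and the top value $\dd_{\rk,k}(\C)=n$, and the rest reduces to staircase bookkeeping. A secondary point requiring care is that the duality identity for $N$ must be invoked only for Galois-closed $\mA$, which is precisely why the stability fact $\mA^\perp\in\Lambda_q(n,m)$ is essential rather than a throwaway remark.
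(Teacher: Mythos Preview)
The paper does not supply a proof of this proposition; it is quoted from \cite{kurihara2015relative,ducoat2015generalized} as background. Your argument is correct and is essentially the standard one in those references (and, for part~(2), is the rank-metric transcription of Wei's original staircase proof for Hamming weights). The Galois-descent lemma you isolate is exactly the structural fact used there, usually phrased as: $\mA\in\Lambda_q(n,m)$ if and only if $\mA$ admits a basis in $\Fq^n$, in which case $\dim_{\Fm}\mA=\dim_{\Fq}(\mA\cap\Fq^n)$.

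Two cosmetic remarks. In the strict-monotonicity step you do not actually need to pick $H$ so that $\langle H\rangle_{\Fm}$ fails to contain $\mA\cap\C$: \emph{any} $\Fq$-hyperplane $H$ of $B=\mA\cap\Fq^n$ gives a Galois-closed $\Fm$-hyperplane $\mA'=\langle H\rangle_{\Fm}$ of $\mA$, and intersecting with a hyperplane drops $\dim_{\Fm}(\mA\cap\C)$ by at most one, which is all you need. In the computation of $\dd_{\rk,k}(\C)=n$ the two directions are swapped in your write-up: the bound $\dd_{\rk,k}(\C)\le n$ is the trivial one (take $\mA=\Fm^n$), while your Galois-closure argument is precisely what gives the nontrivial direction $\dd_{\rk,k}(\C)\ge n$. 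Neither point affects the validity of the proof.
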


Other important results are about bounds on the generalized rank weights of an $\Fmk$ code. We recap these bounds in the following proposition.

\begin{proposition}[Bounds \textnormal{\cite{martinez2016similarities}}]\label{prop:bounds_genweights}
 Let $\C$ be an  $\Fmkd$ code and let $1\le s\le k$. Then
 \begin{equation}\label{eq:singbound_genweights}\dd_{\rk,s}(\C) \leq \min\Big\{n-k+s, sm, \frac{m}{n}(n-k)+m(s-1)+1 \Big\}.\end{equation}
\end{proposition}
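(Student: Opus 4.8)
The plan is to bound $\dd_{\rk,s}(\C)$ by each of the three quantities in \eqref{eq:singbound_genweights} separately, using throughout the identification of $\Lambda_q(n,m)$ with the set of $\Fq$-subspaces of $\Fq^n$: every $\mathcal A\in\Lambda_q(n,m)$ is of the form $V\otimes_{\Fq}\Fm$ for a unique $V\le\Fq^n$, with $\dim_{\Fm}\mathcal A=\dim_{\Fq}V$. The first two bounds follow directly from the definition of $\dd_{\rk,s}$. For $\dd_{\rk,s}(\C)\le n-k+s$, take $\mathcal A=\{v\in\Fm^n:v_{n-k+s+1}=\cdots=v_n=0\}$: it is Galois closed of $\Fm$-dimension $n-k+s$, and $\mathcal A\cap\C$ is cut out inside $\C$ by $k-s$ linear conditions, hence has $\Fm$-dimension at least $s$. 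For $\dd_{\rk,s}(\C)\le sm$, pick any $\Fm$-subspace $\mathcal D\le\C$ with $\dim_{\Fm}\mathcal D=s$ and let $\mathcal A=\sum_{i=0}^{m-1}\theta^i(\mathcal D)$ be its Galois closure ($\theta$ the $q$-Frobenius): then $\mathcal A$ is Galois closed, contains $\mathcal D$, and $\dim_{\Fm}\mathcal A\le\sum_i\dim_{\Fm}\theta^i(\mathcal D)=sm$.

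The third bound is the substantial one; clearing denominators, it is equivalent to the Singleton-type inequality $mk\le n\bigl(ms+1-\dd_{\rk,s}(\C)\bigr)$. Since $\dd_{\rk,s}$ is invariant under code equivalence and a degenerate code is equivalent to $\C''\oplus\{0\}^{n-n''}$ for a \emph{nondegenerate} $[n'',k]_{q^m/q}$ code $\C''$ with $n''\le n$, for which $\dd_{\rk,s}(\C)=\dd_{\rk,s}(\C'')$ and $\tfrac{m(n''-k)}{n''}\le\tfrac{m(n-k)}{n}$, I may assume $\C$ nondegenerate. In particular $n\le mk$ and Proposition \ref{prop:gen_weights_properties} applies, and I would proceed by induction on $s$. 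The base case $s=1$ is the classical Singleton bound $mk\le n(m-d+1)$ from \eqref{eq:singleton}, rearranged.

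For the inductive step the main tool is a puncturing argument. Fix $\mathcal A_1\in\Lambda_q(n,m)$ with $\dim_{\Fm}\mathcal A_1=\dd_{\rk,s-1}(\C)$; by strict monotonicity $\dim_{\Fm}(\mathcal A_1\cap\C)=s-1$, and up to equivalence $\mathcal A_1$ is a coordinate subspace, so puncturing $\C$ on the complementary $n-\dd_{\rk,s-1}(\C)$ coordinates gives an $\Fm$-linear code $\C''$ of length $n-\dd_{\rk,s-1}(\C)$ and dimension $k-s+1$. Lifting a nonzero minimum-weight codeword of $\C''$ to $\C$ and enlarging $\mathcal A_1$ by the smallest Galois-closed subspace (of the punctured space) containing that codeword — which has $\Fm$-dimension $\dd_{\rk}(\C'')$ — produces a Galois-closed subspace of $\Fm^n$ meeting $\C$ in $\Fm$-dimension $\ge s$, whence
\[
\dd_{\rk,s}(\C)\ \le\ \dd_{\rk,s-1}(\C)+\dd_{\rk}(\C'')\ \le\ \dd_{\rk,s-1}(\C)+m+1-\frac{m(k-s+1)}{\,n-\dd_{\rk,s-1}(\C)\,},
\]
the last step being \eqref{eq:singleton} applied to $\C''$.

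Finally I would feed the inductive hypothesis $\dd_{\rk,s-1}(\C)\le\frac{m}{n}(n-k)+m(s-2)+1=:B_{s-1}$ into this recursion, aiming at $B_s=B_{s-1}+m$. If $n\le m(s-1)+1$ then $B_s\ge m(s-1)+1\ge n\ge\dd_{\rk,s}(\C)$ and there is nothing to prove; otherwise, with $x=\dd_{\rk,s-1}(\C)\le B_{s-1}$, the claim reduces to $(x-B_{s-1}+1)(n-x)\le m(k-s+1)$. The left-hand side, as a function of $x$, is a downward parabola with roots $B_{s-1}-1$ and $n$; since $B_{s-1}\le m(s-1)\le n-2$ (using $n\le mk$), its vertex lies strictly to the right of $B_{s-1}$, so the function is increasing on $(-\infty,B_{s-1}]$ and is therefore at most its value $n-B_{s-1}$ at $x=B_{s-1}$, and $n-B_{s-1}\le m(k-s+1)$ is once more equivalent to $n\le mk$. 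The delicate point throughout is getting the puncturing to produce an \emph{exact} recursion and checking it telescopes to the bound in the statement with no slack; the three‑term minimum together with the reduction to nondegenerate codes are what make the third bound genuinely nontrivial rather than a routine computation.
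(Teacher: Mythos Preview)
The paper does not give its own proof of this proposition: it is quoted verbatim from \cite{martinez2016similarities} as a background result, with no argument supplied. So there is nothing to compare against, and the relevant question is simply whether your argument stands on its own.

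It does. The first two bounds are handled correctly and in the standard way. For the third bound, your reduction to the nondegenerate case is valid (the generalized rank weights of $\C$ and of the nondegenerate $\C''$ agree, and $x\mapsto m(x-k)/x$ is increasing), the base case $s=1$ is exactly the second half of \eqref{eq:singleton}, and the puncturing step is sound: with $\mathcal A_1$ realised as a coordinate subspace, projecting onto the remaining $n-\dd_{\rk,s-1}(\C)$ coordinates has kernel $\mathcal A_1\cap\C$ of $\Fm$-dimension exactly $s-1$ by strict monotonicity, so the projected code has the claimed dimension $k-s+1$; applying \eqref{eq:singleton} to it and lifting a minimum-weight codeword back yields the recursion $\dd_{\rk,s}(\C)\le \dd_{\rk,s-1}(\C)+\dd_{\rk}(\C'')$. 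The final real-variable estimate is also correct: with $B_{s-1}=\frac{m(n-k)}{n}+m(s-2)+1$, one has $B_{s-1}\le m(s-1)$ because $n\le mk$, and under the case assumption $n\ge m(s-1)+2$ the parabola $(x-B_{s-1}+1)(n-x)$ is increasing on $(-\infty,B_{s-1}]$, so it suffices to check $n-B_{s-1}\le m(k-s+1)$, which unwinds to $n(n-1)\le mk(n-1)$.

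One wording slip: you write ``puncturing $\C$ on the complementary $n-\dd_{\rk,s-1}(\C)$ coordinates'' but then state the resulting length is $n-\dd_{\rk,s-1}(\C)$. What you actually do (and what the rest of the argument uses) is delete the $\dd_{\rk,s-1}(\C)$ coordinates supporting $\mathcal A_1$ and keep the complementary ones. This is purely terminological and does not affect the mathematics.
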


\begin{remark} Here some remarks about Proposition \ref{prop:bounds_genweights}.
\begin{enumerate}
\item Putting $s=1$ in \eqref{eq:singbound_genweights}, we recover the Singleton-like bound of \eqref{eq:singleton}.
\item When $m\geq n$, then the right hand side of \eqref{eq:singbound_genweights} is always equal to $n-k+s$. 
\item The converse is not true. The right hand side of the bound of \eqref{eq:singbound_genweights} can be  equal to $n-k+s$ also when $m<n$. We will see some examples in Section \ref{sec:nearMRD}. 
\end{enumerate}
\end{remark}

\begin{definition}
 Let $s$ be a positive integer.  An $\Fmk$ code $\C$ is  $s$-MRD if 
 $d_{\rk,s}(\C)=n-k+s$.
\end{definition}

Notice that if an $\Fmk$ code $\C$ is $s$-MRD for some $s$, then it is also $s'$-MRD for every $s\leq s'\leq k$.

\begin{remark}\label{rem:MRDvs1MRD}
Observe that a $1$-MRD code is also an MRD code. {However, an MRD code is not necessarily $1$-MRD. This is because we defined MRD codes to be those meeting the minimum of the two bounds in \eqref{eq:singleton} with equality.} Indeed, for us the parameters $n$ and $m$ are not interchangeable, while there is no difference between them when considering $\Fq$-linear rank-metric codes. This is  due to their representation as $\Fq$-subspaces of  $\Fq^{n\times m}$ and to the fact that here the transposition of matrices is an $\Fq$-linear isometry.  
\end{remark}

\begin{remark}
According to Proposition \ref{prop:bounds_genweights}, an $\Fmk$ $s$-MRD code may exist only if 
$$n-k+s \leq \min \Big\{sm,\frac{m}{n}(n-k)+m(s-1)+1 \Big\}.$$
For $1$-MRD codes, this happens only when $m\geq n$, and it is well known that this condition is also sufficient for the existence of such codes; see \cite{delsarte1978bilinear,gabidulin1985theory}. However, for $s\geq 2$, there is very little known about $s$-MRD codes which are not $1$-MRD. In principle, even the condition $m\geq n$ might not be necessary for the existence of an $s$-MRD code when $s\geq 2$. 
This is actually the case sometimes, as we will see later in Corollary \ref{cor:sMRD_exist}.
\end{remark}

\subsection{The geometry of rank-metric codes}
In this subsection we recall the geometric point of view of $\Fm$-linear rank-metric codes via $q$-systems. This aspect was first observed in  \cite{sheekey2019scattered} and independently also in \cite{randrianarisoa2020geometric}. We will use the notation of \cite{alfarano2021linear}, where the connection between rank-metric codes and $q$-systems has been clarified in detail.

\begin{definition}
 An \textbf{$\Fmkd$ system} $\mU$ is an $\Fq$-subspace of $\Fm^k$ of dimension $n$ such that $\langle \mU\rangle_{\Fm}=\Fm^k$. The integer $d$ is defined as
 $$ d\coloneqq n-\max\{\dim_{\Fq}(H\cap \mU) \St H\subseteq \Fm^k,  \dim_{\Fm}(H)=k-1 \}. $$
 For brevity, when the parameters are not relevant, we will simply say that $\mU$ is a \textbf{$q$-system}.
 
 Two  $\Fmkd$ systems $\mU_1$ and $\mU_2$ are said to be \textbf{(linearly) equivalent} if there exists $M \in \GL(k,q^m)$ such that $\mU_1= \mU_2 \cdot M$.
\end{definition}

Let $\mathfrak U(n,k,d)_{q^m/q}$ denote the set of equivalence classes of $\Fmkd$ systems, and let $\mathfrak C(n,k,d)_{q^m/q}$ denote the set of equivalence classes of nondegenerate $\Fmkd$ codes. Then, define the maps
$$\begin{array}{rccc}\Phi: & \mathfrak C(n,k,d)_{q^m/q} &\longrightarrow &\mathfrak U(n,k,d)_{q^m/q} \\
& [\rowsp( u_1^\top \mid \ldots \mid u_n^\top)] & \longmapsto & [\langle u_1, \ldots, u_n\rangle_{\Fq}] \end{array} $$

$$\begin{array}{rccc}\Psi: & \mathfrak U(n,k,d)_{q^m/q} &\longrightarrow &\mathfrak C(n,k,d)_{q^m/q} \\
& [\langle u_1, \ldots, u_n\rangle_{\Fq}] & \longmapsto & [\rowsp( u_1^\top \mid \ldots \mid u_n^\top)] \end{array} $$

\begin{theorem}[\textnormal{\cite{randrianarisoa2020geometric}}]\label{thm:correspondence_codes_systems}
 The maps $\Phi$ and $\Psi$ are well-defined and they are one the inverse of each other. Hence, they define a 1-to-1 correspondence between equivalence classes of $\Fmkd$ codes and equivalence classes of $\Fmkd$ systems. Moreover, under this correspondence that associates a $\Fmkd$ code $\C$ to an $\Fmkd$ system $\mU$, codewords of   $\C$ of rank weight $w$ correspond to $\Fm$-hyperplanes $H$ of $\Fm^k$ with $\dim_{\Fq}(H\cap \mU)=n-w$.
\end{theorem}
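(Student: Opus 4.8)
The plan is to unravel every definition and reduce the statement to elementary linear algebra over $\Fm$ and $\Fq$; the only substantive ingredient is the dictionary between rank weights of codewords and dimensions of hyperplane intersections, and everything else is bookkeeping about how $\GL(k,q^m)$ and $\GL(n,q)$ act. First I would check that $\Phi$ is well-defined. Starting from a nondegenerate $\Fmkd$ code $\C$ with generator matrix $G=(u_1^\top\mid\cdots\mid u_n^\top)$, the condition $\rk(G)=k$ gives $\langle u_1,\dots,u_n\rangle_{\Fm}=\Fm^k$, and Proposition \ref{prop:nondegenerate} gives $\dim_{\Fq}\langle u_1,\dots,u_n\rangle_{\Fq}=n$, so $\mU\coloneqq\langle u_1,\dots,u_n\rangle_{\Fq}$ is genuinely an $\Fmk$ system. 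Two generator matrices of $\C$ differ by left multiplication by some $A\in\GL(k,q^m)$, which replaces each $u_i$ by $u_iA^\top$ and hence $\mU$ by $\mU\cdot A^\top$, an equivalent system; and replacing $\C$ by $\C\cdot B$ with $B\in\GL(n,q)$ amounts to replacing $G$ by $GB$, whose columns are $\Fq$-combinations of $u_1,\dots,u_n$ that still form a basis of $\mU$, so the class $[\mU]$ does not move. I would postpone the claim that the third parameter $d$ matches on both sides (equivalently, that $\Phi$ lands in $\mathfrak U(n,k,d)_{q^m/q}$) until the weight dictionary is in place.

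Next I would treat $\Psi$ symmetrically: from an $\Fmkd$ system $\mU$ with $\Fq$-basis $u_1,\dots,u_n$, the matrix $G=(u_1^\top\mid\cdots\mid u_n^\top)$ has rank $k$ because $\langle\mU\rangle_{\Fm}=\Fm^k$, so $\rowsp(G)$ is a $k$-dimensional code whose column $\Fq$-span is $\mU$, hence nondegenerate by Proposition \ref{prop:nondegenerate}. Changing the $\Fq$-basis of $\mU$ multiplies $G$ on the right by an element of $\GL(n,q)$, yielding an equivalent code, and replacing $\mU$ by $\mU\cdot M$ with $M\in\GL(k,q^m)$ multiplies $G$ on the left by $M^\top$, leaving $\rowsp(G)$ unchanged; so $\Psi$ is well-defined. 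That $\Psi\circ\Phi$ and $\Phi\circ\Psi$ are the identity maps is then immediate from comparing the two constructions.

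For the weight dictionary I would use the bilinear pairing $(v,u)\mapsto vu^\top$ on $\Fm^k$. A codeword of $\C=\rowsp(G)$ is $c=vG$ with $j$-th entry $vu_j^\top$, so $\wt_{\rk}(c)$ equals the $\Fq$-dimension of the image of the $\Fq$-linear map $\varphi_v\colon\mU\to\Fm$, $u\mapsto vu^\top$ (the $u_j$ span $\mU$, so the image is exactly $\langle vu_1^\top,\dots,vu_n^\top\rangle_{\Fq}$). By rank--nullity this dimension is $n-\dim_{\Fq}(\mU\cap H_v)$, where $H_v=\{u\in\Fm^k: vu^\top=0\}$ is an $\Fm$-hyperplane whenever $v\neq 0$. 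Conversely every $\Fm$-hyperplane of $\Fm^k$ is of the form $H_v$, for $v$ unique up to an $\Fm^\times$-scalar, and scaling $v$ (hence $c$) by $\Fm^\times$ does not change the rank weight; thus nonzero codewords of rank weight $w$ correspond to hyperplanes $H$ with $\dim_{\Fq}(H\cap\mU)=n-w$. Minimizing over $w$ gives $\dd_{\rk}(\C)=n-\max\{\dim_{\Fq}(H\cap\mU): H\subseteq\Fm^k,\ \dim_{\Fm}H=k-1\}$, which is exactly the $d$ attached to $\mU$; this completes the postponed part of the well-definedness of $\Phi$ and $\Psi$ and proves the final assertion.

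There is no real obstacle here — the argument is entirely linear algebra — but two points need care: keeping the transposes straight and watching the side on which each group acts, so that $\GL(n,q)$ fixes $[\mU]$ while $\GL(k,q^m)$ only moves it within its equivalence class; and observing that the codeword-to-hyperplane correspondence is a genuine bijection only after quotienting the nonzero codewords by $\Fm^\times$, which is harmless precisely because homothetic codewords have equal rank weight.
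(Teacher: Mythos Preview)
Your argument is correct, but note that the paper does not actually supply a proof of this theorem: it is stated with a citation to \cite{randrianarisoa2020geometric} and then used as a black box. So there is no ``paper's own proof'' to compare against; your write-up simply fills in the standard linear-algebra verification that the cited result encapsulates.

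That said, the proof you give is exactly the expected one and matches the spirit of the surrounding discussion in the paper (in particular the formula \eqref{eq:rank_weight} just after the theorem statement is precisely your rank--nullity computation $\wt_{\rk}(vG)=n-\dim_{\Fq}(\mU\cap H_v)$). The two caveats you flag --- tracking which side $\GL(n,q)$ and $\GL(k,q^m)$ act on, and the fact that the codeword/hyperplane correspondence is only bijective after modding out by $\Fm^\times$ --- are the right places to be careful, and you handle both correctly.
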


As stated in Theorem \ref{thm:correspondence_codes_systems}, the correspondence is not only between codes and systems, but it also induces a relation between codewords and hyperplanes. 

For a given  $\Fmkd$ system $\mU$ choose any $\Fq$-basis $(g_1,\ldots g_n)$, where the $g_i$'s belong to $\Fm^k$. Now, put them as  columns of a matrix $G=(g_1 \mid \ldots \mid g_n)\in \Fm^{k \times n}$. The code corresponding to $\mU$ is $\C=\rowsp(G)$. 
At this point, any nonzero codeword of $\C$ is of the form $vG$, for any $v\in\Fm^k$. Then, we have
\begin{equation}\label{eq:rank_weight}\wt_{\rk}(vG)=n-\dim_{\Fq}((\langle v\rangle_{\Fm})^\perp\cap \mU).\end{equation}

\begin{remark}
Observe that in the special case that $n=m$, we have that $[n,k]_{q^n/q}$ codes can be equivalently represented as $\F_{q^n}$-subspaces of the space of $q$-linearized polynomials over $\F_{q^n}$ of $q$-degree smaller than $n$, that is
$$ \mathcal L_{q,n}[x] \coloneqq \left\{ a_0x+ a_1x^q+\ldots+ a_{n-1}x^{q^{n-1}} \St a_i \in \F_{q^n} \right\}. $$
From any code $\tilde{\C}=\langle f_1(x), \ldots, f_k(x)\rangle_{\F_{q^n}}$ in this representation, then one can get a vector representation $\C$ as $[n,k]_{q^n/q}$ code by choosing an $\Fq$-basis $\mB=(\beta_1,\ldots, \beta_n)$ of $\F_{q^n}$ and defining 
$$\C:= \left\{ (f(\beta_1),\ldots, f(\beta_n)) \St f \in \tilde {\C} \right\}\subseteq \F_{q^n}^n.$$
However, independently of the choice of the $\Fq$-basis $\mB$, an $[n,k]_{q^n/q}$ system $\mU\in \Phi([\C])$ is given by
$$\mU =\left\{(f_1(\alpha),f_2(\alpha),\ldots, f_k(\alpha)) : \alpha \in \F_{q^n}\right\}. $$
Note that this special $q$-system was already studied in connection with the code $\tilde{\C}$ by Sheekey and Van de Voorde in \cite{sheekey2020rank}, where they used the notation $\mU_{\tilde{\C}}$.
\end{remark}

In order to illustrate the usefulness of the interplay between rank-metric codes and $q$-systems, we now give a notion of duality of $q$-systems using a coding theoretic perspective. Notice that this may be seen as a reinterpretation of the notion of Delsarte dual  of an $\Fmk$ system introduced in \cite{csajbok2021generalising}.

\begin{definition}\label{def:Delsartedual}
Let $1\leq k \leq n-1$ and let $\mU$ be an $\Fmkd$ system with $d\geq 2$. We define the \textbf{rank-metric dual} of $\mU$ to be any $[n,n-k]_{q^m/q}$ system in $\Phi([\C^\perp]),$
where $\C$ is any $\Fmkd$ code in $\Psi([\mU])$. We will denote it by $\mU^\perp$.
\end{definition}

\begin{remark}
Observe that $\mU^{\perp}$ is well-defined up to equivalence of $q$-systems, since it does not depend on the choice of the code $\C\in \Psi([\mU])$. This is because two codes $\C_1,\,  \C_2$ are equivalent if and only if $\C_1^\perp$ and $\C_2^\perp$ are equivalent.
Furthermore, the assumption on $d$ being at least $2$ implies, by Proposition \ref{prop:nondegenerate} that $\C^\perp$ is nondegenerate, and hence $\Phi([\C^\perp])$ is well-defined. In other words, we have the following diagram:

$$  \begin{tikzcd}[row sep=3em,column sep=5em]
\bigcup\limits_{d\geq 2} \mathfrak C(n,k,d)_{q^m/q}  \arrow[r,shift right,swap,"\Phi"]  \arrow[d,shift right,swap,"\perp"] & \bigcup\limits_{d\geq 2} \mathfrak U(n,k,d)_{q^m/q} \arrow[l,shift right,swap,"\Psi"] \arrow[d,shift right,swap,"\perp"]  \\%
\bigcup\limits_{d\geq 2} \mathfrak C(n,n-k,d)_{q^m/q} \arrow[u,shift right,swap,"\perp"] \arrow[r,shift right,swap,"\Phi"]  & \bigcup\limits_{d\geq 2} \mathfrak U(n,n-k,d)_{q^m/q} \arrow[l,shift right,swap,"\Psi"] \arrow[u,shift right,swap,"\perp"]
\end{tikzcd}
$$
\end{remark}

\begin{remark}
Observe that, in the case $n=m$, Definition \ref{def:Delsartedual} coincides with the \textbf{Delsarte dual} defined in \cite{csajbok2021generalising}. Indeed, although only stated for an MRD code, the result in \cite[Theorem 4.12]{csajbok2021generalising} can be extended to the more relaxed hypothesis that the code $\C$ and its dual $\C^\perp$ are both nondegenerate, Furthermore, we need to point out that the duality considered in that result is defined on the space $\mathcal L_{q,n}[x]$ and it  coincides with the one of Definition \ref{def:dualcode} when transforming codes in  $\mathcal L_{q,n}[x]$ to codes in $\Fm^m$ 
using an $\Fq$-basis $\mathcal B=(\beta_1,\ldots,\beta_m)$ of $\Fm$ which is self-dual with respect to the trace bilinear form, that is, such that
$$ \Tr_{\Fm/\Fq}(\beta_i,\beta_j)=\begin{cases} 1 & \mbox{ if } i=j \\
0 & \mbox{ if } i \neq j.\end{cases}$$
See \cite[Theorem 5.5]{neri2021twisted} for more details. 
\end{remark}

\medskip 
We conclude this section by recalling how the geometric point of view  allows us to characterize the generalized rank weights of an $\Fmk$  code via its associated $q$-system.

\begin{theorem}[\textnormal{\cite{randrianarisoa2020geometric}}]\label{thm:gen_weights_geometric}
 Let $\mC$ be a nondegenerate $\Fmk$  code and let $\mU\in\Phi([\C])$ be any of the $\Fmk$ systems associated. Then
 $$ \dd_{\rk,r}(\C)=n-\max\{\dim_{\Fq}(\mU\cap W) \St W\subseteq \Fm^k,  \dim_{\Fm}(W)=k-r\}.$$
\end{theorem}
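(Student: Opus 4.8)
The plan is to derive this from Theorem \ref{thm:correspondence_codes_systems} together with the definition of generalized rank weights, translating Galois-closed subspaces of $\Fm^n$ on the code side into $\Fm$-subspaces of $\Fm^k$ on the system side. First I would recall the structural fact that a Galois closed $\Fm$-subspace $\mA\in\Lambda_q(n,m)$ of $\Fm$-dimension $j$ is precisely of the form $\mA = \langle S\rangle_{\Fm}$ for some $\Fq$-subspace $S\subseteq\Fq^n$ of $\Fq$-dimension $j$; equivalently $\mA = V\otimes_{\Fq}\Fm$ where $V = \mA\cap\Fq^n$. This is a standard Galois-descent statement, and it lets me identify the set of Galois closed subspaces of $\Fm$-codimension $r$ with the set of $\Fm$-codimension-$r$ spaces spanned by an $\Fq$-rational subspace, hence (taking a basis) with $\Fm$-hyperplane-type intersections iterated $r$ times. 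The cleanest route, though, is to relate $\dd_{\rk,r}(\C)$ directly to hyperplane sections of $\mU$.

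Second, I would set $\mU\in\Phi([\C])$ with $\Fq$-basis $u_1,\dots,u_n$ of $\Fm^k$, and $G = (u_1^\top\mid\dots\mid u_n^\top)$ the corresponding generator matrix, so $\C = \rowsp(G)$. For a subspace $W\subseteq\Fm^k$ with $\dim_{\Fm}(W) = k-r$, pick $v_1,\dots,v_r\in\Fm^k$ spanning $W^{\perp}$ (the orthogonal complement under the standard bilinear form), and consider the Galois closure $\mA$ of $\langle v_1 G,\dots, v_r G\rangle_{\Fm}$ inside $\Fm^n$. Using \eqref{eq:rank_weight} in the form $\wt_{\rk}(vG) = n - \dim_{\Fq}(\langle v\rangle_{\Fm}^{\perp}\cap\mU)$, and more generally the observation that $\dim_{\Fm}(\mA\cap\C)\ge$ the number of independent rows that survive, I want to show the two maximization problems match: a Galois closed $\mA$ of $\Fm$-dimension $n-(n-w)$ meeting $\C$ in dimension $\ge r$ corresponds to an $\Fm$-subspace $W$ of dimension $k-r$ with $\dim_{\Fq}(\mU\cap W) = w$. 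The key computation is: for $W = \bigcap_{i=1}^r \langle v_i\rangle_{\Fm}^{\perp}$, one has $\dim_{\Fq}(\mU\cap W) = n - \dim_{\Fq}\big(\langle v_1 G,\dots,v_r G\rangle_{\Fq}\big)$, because $x\in\mU\cap W$ iff the coordinate vector of $x$ w.r.t.\ the basis $u_i$ is $\Fq$-orthogonal to every $v_i G$; and the Galois closure of $\langle v_1 G,\dots,v_r G\rangle_{\Fm}$ has $\Fm$-dimension equal to $\dim_{\Fq}\langle v_1 G,\dots,v_r G\rangle_{\Fq}$ exactly when one can choose the $v_i$ so that the rows $v_i G$ are $\Fq$-independent, which is possible precisely because $\dim_{\Fm}(\mA\cap\C) = r$ forces an $\Fq$-rational generating set of size $r$.

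Third, I would run the inequality both ways. For ``$\ge$'': given the optimal $W$ with $\dim_{\Fq}(\mU\cap W) = w$ achieving the max, build $\mA$ as above of $\Fm$-dimension at most $n-w$ with $\dim_{\Fm}(\mA\cap\C)\ge r$, so $\dd_{\rk,r}(\C)\le n-w$, i.e.\ $\dd_{\rk,r}(\C)\le n - \max(\cdots)$. For ``$\le$'': given an optimal Galois closed $\mA$ with $\dim_{\Fm}(\mA) = \dd_{\rk,r}(\C)$ and $\dim_{\Fm}(\mA\cap\C)\ge r$, use Galois closedness to write $\mA\cap\C$ (which is itself Galois closed, being an intersection of two Galois closed spaces) as spanned by $\Fq$-rational, hence $\Fq$-independent, codewords $v_1 G,\dots,v_r G$; then $W := \bigcap_i\langle v_i\rangle_{\Fm}^{\perp}$ has $\dim_{\Fm}(W)\ge k-r$ and $\dim_{\Fq}(\mU\cap W) = n - \dim_{\Fq}\langle v_1 G,\dots,v_r G\rangle_{\Fq} \ge n - \dim_{\Fm}(\mA) = n - \dd_{\rk,r}(\C)$, which after possibly shrinking $W$ to dimension exactly $k-r$ (this cannot increase $\dim_{\Fq}(\mU\cap W)$, and by a dimension count it does not decrease it below $n-\dd_{\rk,r}(\C)$) gives the reverse bound.

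The main obstacle I expect is the bookkeeping around Galois closure: specifically, proving that $\mA\cap\C$ being Galois closed of $\Fm$-dimension $\ge r$ yields $r$ codewords whose preimages $v_i$ can be chosen $\Fm$-independent (so that $W$ has the right dimension), and dually that the Galois closure of $\langle v_1G,\dots,v_rG\rangle_{\Fm}$ does not jump in dimension — this is where the identity $\dim_{\Fm}(\text{Galois closure of }V) = \dim_{\Fq}(V\cap\Fq^n)$ and the careful choice of an $\Fq$-rational basis must be invoked. Everything else is linear algebra over the pair $(\Fq,\Fm)$ plus Theorem \ref{thm:correspondence_codes_systems}; the $r=1$ case is exactly the hyperplane statement already built into the definition of an $\Fmkd$ system, which serves as a useful sanity check.
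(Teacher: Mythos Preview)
The paper does not actually prove Theorem~\ref{thm:gen_weights_geometric}; it is quoted from \cite{randrianarisoa2020geometric} and used as a black box, so there is no argument here to compare against. Your overall strategy --- identifying Galois closed subspaces with $\Fq$-rational ones and translating across $G$ --- is the standard one, and your ``$\ge$'' direction is essentially correct once the notation is repaired.

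There is, however, a real gap in your ``$\le$'' direction. You claim that $\mA\cap\C$ is Galois closed ``being an intersection of two Galois closed spaces'', but $\C$ is \emph{not} Galois closed: a generic $\Fm$-subspace of $\Fm^n$ is not Frobenius-stable, and in fact $\C\cap\Fq^n$ may be $\{0\}$. So you cannot produce $\Fq$-rational codewords $v_iG$. Relatedly, the identity $\dim_{\Fq}(\mU\cap W)=n-\dim_{\Fq}\langle v_1G,\dots,v_rG\rangle_{\Fq}$ is not correct as written: the $\Fq$-span of $r$ vectors in $\Fm^n$ has $\Fq$-dimension at most $r$, which is not the quantity you need. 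What actually appears on the right is the $\Fm$-dimension of the Galois closure of $\langle v_1G,\dots,v_rG\rangle_{\Fm}$, equivalently the $\Fq$-dimension of the span in $\Fq^n$ of all the $\Fq$-components of the $v_iG$.

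The fix is that you do not need $\Fq$-rationality of the $v_iG$ at all. Choose any $\Fm$-independent $v_1,\dots,v_r$ with $v_iG\in\mA\cap\C$ and set $W=\bigcap_i\langle v_i\rangle_{\Fm}^{\perp}$. Writing $\mA=S\otimes_{\Fq}\Fm$ with $S\subseteq\Fq^n$, each $v_iG$ is an $\Fm$-combination of vectors of $S$, so for $\lambda\in S^{\perp}\subseteq\Fq^n$ one has $(v_iG)\lambda^\top=0$ for all $i$; hence $G\lambda^\top\in\mU\cap W$ and $\dim_{\Fq}(\mU\cap W)\ge n-\dim_{\Fq}S=n-\dim_{\Fm}(\mA)$. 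This gives the desired inequality without ever invoking Galois closedness of $\mA\cap\C$.
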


We conclude this section with an illustrative example.

\begin{example}\label{exa:detailed}
 Let $q=2$, $m=3$ $k=2$ and $n=4$, and let us consider the field $\F_{2^3}=\F_2(\alpha)$, where $\alpha^3+\alpha+1=0$. Let $\C$ be the $[4,2,d]_{8/2}$ code whose generator matrix is 
 $$ G=\begin{pmatrix} 1 & 0 & \alpha & 0 \\
 0 & 1  & 0 &\alpha^2\end{pmatrix}.$$
 It is immediate to see that the columns of $G$ are $\F_2$-linearly independent, and hence the code $\C$ is nondegenerate.  One can compute a representative in $\Phi([\C])$ by taking the $\F_2$-subspace generated by the columns of $G$, obtaining the $[4,2,d]_{8/2}$ system
 $$\mU= \left\{(\lambda_1+\lambda_3\alpha, \lambda_2+\lambda_4\alpha^2) \St \lambda_1,\lambda_2,\lambda_3,\lambda_4 \in \F_2 \right\}\subseteq \F_8^2.$$
 The minimum distance $d$ of $\C$ can be  obtained by direct computation, observing that any scalar multiple of any of the two rows has rank weight $2$, and any nonzero linear combination of them can only increase the rank weight. On the other hand, it is also possible to find the value $d$ from $\mU$. The hyperplanes in $\F_8^2$ are one-dimensional $\F_8$-subspaces and 
 $$ d=4-\max \left\{ \dim_{\F_2}(\langle v\rangle_{\F_8} \cap \mU) \St v \in \F_8^2 \setminus \{0\} \right\}.$$
 We can immediately see that taking the hyperplane $H=\langle (0,1)\rangle_{\F_8}$, we have $\dim_{\F_2}(\mU\cap H)=2$, giving $d\leq 2$. However, $d=1$ if and only if there exists a vector $v\in\F_8^2$ such that $\langle  v\rangle_{\F_8}\subseteq \mU$. This is not possible, since the projections of $\mU$ on the first and on the second components are not surjective. Thus, $d=2$.
 
 Straightforward computations show that $\C^\perp$ is generated by the matrix 
 $$H=\begin{pmatrix} \alpha & 0 & 1 & 0 \\
 0 & \alpha^2 & 0 & 1
 \end{pmatrix}$$
 This implies that a representative for $\mU^\perp$ is the $\F_2$-subspace generated by the columns of $H$, that is,
 $$\mU^\perp= \left\{(\lambda_1\alpha+\lambda_3, \lambda_2\alpha^2+\lambda_4) \St \lambda_1,\lambda_2,\lambda_3,\lambda_4 \in \F_2 \right\}=\mU.$$ 
 Hence, we have that $\C^\perp$ and $\C$ are both $[4,2,2]_{8/2}$ nondegenerate codes. This also implies that $\dd_{\rk,2}(\C)=\dd_{\rk,2}(\C^\perp)=4$, and we  can see that the properties   of Proposition \ref{prop:gen_weights_properties} are satisfied: we have monotonicity and 
 $$ \{ \dd_{\rk,1}(\C), \dd_{\rk,2}(\C)\}=\{2,4\}, \qquad \{5-\dd_{\rk,2}(\C^\perp), 5-\dd_{\rk,1}(\C^\perp) \}=\{1,3\}. $$
 
\end{example}


\section{Evasive subspaces and generalized weights}\label{sec:evasive_genweights}

Evasive subspaces have been recently studied in \cite{bartoli2021evasive} as a special class of \emph{evasive sets}. The latter have been  introduced in \cite{pudlak2004pseudorandom} in connection with pseudorandomness and for constructing Ramsey graphs. Apart from being a special class of evasive sets, evasive subspace can also be seen as their $q$-analogues. Moreover, they are the natural generalization of $h$-scattered subapces, which were introduced first in \cite{blokhuis2000scattered} for $h=1$, and then studied in \cite{csajbok2021generalising} for arbitrary $h\in \mathbb{N}$.

Some first connections between evasive subspaces and rank-metric codes have been already observed in \cite{bartoli2021evasive}, in particular for what concerns the special class of $h$-scattered subspaces. This section is dedicated to Theorem \ref{thm:evasive_generalizedweights}, a  fundamental result which represents the foundations for the interplay between evasive subspaces and rank-metric codes.

\begin{definition}
Let $h,r,k,n$ be nonnegative integers such that $h<k \leq n$. An $\Fmk$ system $\mU$ is said to be  \textbf{$(h,r)$-evasive} if for every $h$-dimensional $\Fm$-subspace $W$ of $\Fm^k$ we have $\dim_{\Fq}(\mU\cap W)\leq r$. When $h=r$, an $(h,h)$-evasive $\Fmk$ system will be also called \textbf{$h$-scattered}. If moreover $h=1$, we will simply say that it is \textbf{scattered}.
\end{definition}

\begin{remark} Here some properties of evasive $q$-systems.
 \begin{enumerate}
     \item If $r<h$ there is no $(h,r)$-evasive $\Fmk$ system.
     \item If $\mU$ is an $(h,r)$-evasive $\Fmk$ system and $h>0$, then $\mU$ is also $(h-1,r-1)$-evasive.
 \end{enumerate}
\end{remark}

The following result gives a precise description of the connections between evasive subspaces and rank-metric codes.

\begin{theorem}\label{thm:evasive_generalizedweights}
Let $\mC$ be a nondegenerate  $\Fmk$ code, and let $\mU\in\Phi([\mC])$. Then, the following are equivalent
\begin{enumerate}
 \item $\mU$ is  $(h,r)$-evasive.
 \item $\dd_{\rk,k-h}(\C) \geq n-r$.
 \item $\dd_{\rk,r-h+1}(\C^\perp)\geq r+2$.
\end{enumerate}

  In particular, $\dd_{\rk,k-h}(\C) = n-r$  if and only if $\mU$ is $(h,r)$-evasive but not $(h,r-1)$-evasive if and only if  $\dd_{\rk,r-h+1}(\C^\perp)\geq r+2 \geq \dd_{\rk,r-h}(\C^\perp)+2$.
\end{theorem}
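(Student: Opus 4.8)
The plan is to deduce everything from Theorem~\ref{thm:gen_weights_geometric} together with the Wei-type duality of Proposition~\ref{prop:gen_weights_properties}. The key observation is simply to translate the definition of $(h,r)$-evasiveness into a statement about $\max\{\dim_{\Fq}(\mU\cap W) \St \dim_{\Fm}(W)=k-h\}$, which is exactly the quantity appearing in the geometric formula for $\dd_{\rk,k-h}(\C)$.

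First I would prove the equivalence $(1)\Leftrightarrow(2)$. Writing $W$ for an arbitrary $\Fm$-subspace of $\Fm^k$ of dimension $k-h$ and setting $h'=k-h$, the system $\mU$ is $(h,r)$-evasive precisely when $\dim_{\Fq}(\mU\cap W)\le r$ for all such $W$, i.e.\ when $\max\{\dim_{\Fq}(\mU\cap W) \St W\subseteq\Fm^k,\ \dim_{\Fm}(W)=k-h\}\le r$. By Theorem~\ref{thm:gen_weights_geometric} this maximum equals $n-\dd_{\rk,k-h}(\C)$, so the condition reads $n-\dd_{\rk,k-h}(\C)\le r$, that is, $\dd_{\rk,k-h}(\C)\ge n-r$. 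This also immediately gives the sharp version: $\mU$ is $(h,r)$-evasive but not $(h,r-1)$-evasive exactly when that maximum equals $r$, i.e.\ when $\dd_{\rk,k-h}(\C)=n-r$.

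Next I would prove $(2)\Leftrightarrow(3)$ via Wei-type duality. Since $\C$ is nondegenerate and $\mU$ being $(h,r)$-evasive forces $r\ge h\ge 1$ (so $\C^\perp$ is also nondegenerate when the relevant index makes sense — one should check the edge case $h=r$ and $n-k$ small), Proposition~\ref{prop:gen_weights_properties}(2) says the sets $\{\dd_{\rk,i}(\C) \St 1\le i\le k\}$ and $\{n+1-\dd_{\rk,j}(\C^\perp) \St 1\le j\le n-k\}$ partition $\{1,\dots,n\}$. The monotonicity of both sequences means that $\dd_{\rk,k-h}(\C)\ge n-r$ is equivalent to saying that the interval $\{n-r,\dots,n\}$ of size $r+1$ contains at least $h+1$ of the values $\dd_{\rk,i}(\C)$ (namely those with index in $\{k-h,\dots,k\}$), hence at most $(r+1)-(h+1)=r-h$ of the dual values $n+1-\dd_{\rk,j}(\C^\perp)$ lie in $\{n-r,\dots,n\}$. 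Translating, at most $r-h$ indices $j$ satisfy $\dd_{\rk,j}(\C^\perp)\le r+1$, which by monotonicity is exactly $\dd_{\rk,r-h+1}(\C^\perp)\ge r+2$. Running this counting argument with $r$ replaced by $r-1$ gives the sharpened equivalence involving $\dd_{\rk,r-h}(\C^\perp)+2$.

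The main obstacle I anticipate is bookkeeping at the boundary: making sure the index $r-h+1$ is in the legal range $1\le r-h+1\le n-k$ (equivalently $r\le n-k+h-1$), handling the case $r=h$ (scattered) where $\mU\cap W$ can be forced to have small dimension, and confirming $\C^\perp$ is nondegenerate so that Proposition~\ref{prop:gen_weights_properties} applies — one may need to note that if $\C^\perp$ is degenerate then $\dd_{\rk,1}(\C)=n-\dim$ of something, and that $(h,r)$-evasiveness with $r<n$ already precludes the degenerate situation in the range we care about. Once these small range checks are dispatched, the argument is a clean two-step: geometry (Theorem~\ref{thm:gen_weights_geometric}) for $(1)\Leftrightarrow(2)$, and an elementary interval-counting consequence of Wei-type duality for $(2)\Leftrightarrow(3)$.
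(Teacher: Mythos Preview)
Your proposal is correct and follows essentially the same route as the paper: the equivalence $(1)\Leftrightarrow(2)$ is exactly the translation via Theorem~\ref{thm:gen_weights_geometric}, and $(2)\Leftrightarrow(3)$ is the same interval-counting argument using monotonicity and Wei-type duality from Proposition~\ref{prop:gen_weights_properties}. One minor note: your worry about $\C^\perp$ being nondegenerate is unnecessary here, since Proposition~\ref{prop:gen_weights_properties} as stated only requires $\C$ to be nondegenerate, and the paper likewise does not address the index-range edge cases you flag.
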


\begin{proof}
 The equivalence between (1) and (2) follows from the definition of evasive subspace and from Theorem \ref{thm:gen_weights_geometric}. 
 Now, by Proposition \ref{prop:gen_weights_properties}(1), we have $\dd_{\rk,k-h}(\C) \geq n-r$ if and only if $\{\dd_{\rk,k-h+i}(\C) \St 0\leq i \leq h\} \subseteq \{n-r,\ldots,n\}$, or in other words
 $$|\{\dd_{\rk,i}(\C) \St 1\leq i \leq k\}\cap\{n-r,\ldots,n\}|\geq h+1.$$  
 By Proposition \ref{prop:gen_weights_properties}(2), this is true if and only if
 $$|\{n+1-\dd_{\rk,i}(\C^\perp) \St 1\leq i \leq n-k\}\cap\{n-r,\ldots,n\}|\leq r-h.$$
 Again using Proposition \ref{prop:gen_weights_properties}(2), this is equivalent to say that 
 $$|\{n+1-\dd_{\rk,i}(\C^\perp) \St 1\leq i \leq n-k\}\cap\{1,\ldots,n-r-1\}|\geq n-k-r+h.$$
 In other words,
 $\{n+1-d_{\rk,i}(\C^\perp) \St r-h+1 \leq i \leq n-k\} \subseteq \{1,\ldots,n-r-1\},$
 which is in turn equivalent to say (by Proposition \ref{prop:gen_weights_properties}(1)) that $n+1-d_{\rk,r-h+1}(\C^\perp)\le n-r-1$, that is condition (3).
 
 Finally, the equivalence between the fact that $\dd_{\rk,k-h}(\C) = n-r$ and that $\mU$ is  $(h,r)$-evasive but not $(h,r-1)$-evasive immediately follows from the first part of the statement. In the same way, we can obtain that these are in turn equivalent to say that $\dd_{\rk,r-h+1}(\C^\perp)\geq r+2$ and $\dd_{\rk,r-h}(\C^\perp)\leq r$.
\end{proof}

We conclude also this section with an illustrative example.

\begin{example}\label{exa:duality}
 Let $q=2$, $m=3$, $k=2$ and $n=5$, and let us consider the field $\F_{2^3}=\F_2(\alpha)$, where $\alpha^3+\alpha+1=0$. Let $\C$ be the $[5,2,d]_{8/2}$ code whose generator matrix is 
 $$ G=\begin{pmatrix} 1 & 0 & \alpha & \alpha^2 & 1\\
 0 & 1  & 1 & 0 &\alpha\end{pmatrix}.$$
 The code $\C$ is nondegnerate, since the columns of $G$ are $\F_2$-linearly independent, and a $[5,2,d]_{8/2}$ system associated to $\C$ is given by the $\F_2$-subspace generated by the columns, that is
 \begin{align*}\mU&=\left\{(\lambda_1 + \lambda_3\alpha + \lambda_4 \alpha^2 +\lambda_5, \lambda_2+\lambda_3+\lambda_5\alpha ) \St \lambda_1, \lambda_2, \lambda_3,\lambda_4,\lambda_5 \in \F_2 \right\}\\
 &= \left\{(\lambda_1 + \lambda_3\alpha + \lambda_4 \alpha^2, \lambda_2+\lambda_5\alpha ) \St \lambda_1, \lambda_2, \lambda_3,\lambda_4,\lambda_5 \in \F_2 \right\} \subseteq \F_8^2.
 \end{align*}
 It is immediate to see that $\mU$ is $(1,3)$-evasive, and hence $\dd_{\rk}(\C)=2$. Moreover, being $\C$ nondegenerate, we also have $\dd_{\rk,2}(\C)=5$.

 Furthermore, from the change of basis of $\mU$, we get that we can consider an equivalent code $\C'$ to $\C$ whose generator matrix is obtained as
 $$ G'=\begin{pmatrix}  1 & 0 & \alpha & \alpha^2 & 0\\
 0 & 1  & 0 & 0 &\alpha
 \end{pmatrix}.$$
 Now, let us consider the dual code $\C'^\perp$ which is equivalent to $\C^\perp$. A generator matrix for $\C^\perp$ is given by
 $$ H'=\begin{pmatrix} \alpha & 0 & 1 & 0 & 0\\
 \alpha^2 & 0  & 0 & 1 & 0  \\ 
 0 & \alpha & 0 & 0 & 1\end{pmatrix}$$
 Thus, a representative $\mU^\perp$ of the rank-metric dual of $\mU$ is given by the $\F_2$-subspace of the columns of $H$, that is,
 $$ \mU^\perp=\left\{ (\lambda_1\alpha+\lambda_3, \lambda_1\alpha^2 +\lambda_4, \lambda_2\alpha+\lambda_5) \St \lambda_1, \lambda_2, \lambda_3,\lambda_4,\lambda_5 \in \F_2 \right\}.$$
 Using the Wei-type duality in Proposition \ref{prop:gen_weights_properties}(2), we deduce that 
 $$ \dd_{\rk}(\C^\perp)=2, \qquad  \dd_{\rk,2}(\C^\perp)=3, \qquad \dd_{\rk,3}(\C^\perp)=5.$$
 Finally, by applying Theorem \ref{thm:evasive_generalizedweights} to the code $\C^\perp$, we deduce that 
 $\mU^\perp$ is $(1,2)$-evasive and $(2,3)$-evasive.
\end{example}


\section{Consequences of Theorem \ref{thm:evasive_generalizedweights}} \label{sec:consequences}

Theorem \ref{thm:evasive_generalizedweights} has multiple consequences, which we  carefully describe in this section. Several of these results are reinterpretations of recent findings, while some  are new. However, our approach unifies the  techniques used in a single framework and simplifies significantly all the proofs previously proposed.

\subsection{Connections between codes and evasive subspaces}
The first implication of Theorem \ref{thm:evasive_generalizedweights} is that it allows to characterize codes associated to $h$-scattered subspaces, which turn out to be $(k-h)$-MRD.  

\begin{corollary}\label{cor:hscattered_sMRD}
 Let $\C$ be a nondegenerate $\Fmk$  code and let $\mU\in\Phi([\C])$. Then $\mU$ is an $h$-scattered $\Fmk$ system if and only if $\C$ is $(k-h)$-MRD.
\end{corollary}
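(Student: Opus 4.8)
The plan is to deduce this corollary immediately from Theorem \ref{thm:evasive_generalizedweights} combined with the Singleton-type bound of Proposition \ref{prop:bounds_genweights}, with essentially no extra work. The starting observation is definitional: $\mU$ is $h$-scattered precisely when it is $(h,h)$-evasive. So the first step is to specialize the equivalence $(1)\Leftrightarrow(2)$ of Theorem \ref{thm:evasive_generalizedweights} to the value $r=h$, which gives that $\mU$ is $(h,h)$-evasive if and only if $\dd_{\rk,k-h}(\C)\ge n-h$.

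The second step is to compare this with the upper bound coming from Proposition \ref{prop:bounds_genweights} applied with $s=k-h$: among the three quantities appearing in the minimum on the right-hand side of \eqref{eq:singbound_genweights}, the first one equals $n-k+s=n-h$, hence in any case $\dd_{\rk,k-h}(\C)\le n-h$. Therefore the inequality $\dd_{\rk,k-h}(\C)\ge n-h$ obtained in the first step is equivalent to the equality $\dd_{\rk,k-h}(\C)=n-h=n-k+(k-h)$, which is exactly the definition of $\C$ being $(k-h)$-MRD. Chaining the two equivalences finishes the argument; note that both directions come for free since Theorem \ref{thm:evasive_generalizedweights} is itself an equivalence.

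I do not expect any real obstacle here — the content is entirely carried by Theorem \ref{thm:evasive_generalizedweights}. The only points worth a quick sanity check are the boundary cases and the implicit ranges of the indices: the hypothesis $h<k$ built into the notion of $h$-scattered $\Fmk$ system guarantees $1\le k-h\le k$, so $\dd_{\rk,k-h}(\C)$ is well-defined; and in the extremal case $h=0$ both conditions are trivially satisfied for a nondegenerate code, since a $0$-scattered system imposes no constraint and $\dd_{\rk,k}(\C)=n$ by Proposition \ref{prop:gen_weights_properties}(1), so the statement remains consistent there as well.
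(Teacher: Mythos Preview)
Your proof is correct and matches the paper's intended argument: the paper states this as an immediate corollary of Theorem \ref{thm:evasive_generalizedweights} without further proof, and your specialization $r=h$ together with the Singleton-type upper bound $\dd_{\rk,k-h}(\C)\le n-h$ is exactly the trivial completion. The boundary checks you include are fine and not needed beyond what you wrote.
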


Theorem \ref{thm:evasive_generalizedweights} also implies the existence of nontrivial $s$-MRD codes.

\begin{corollary}[Existence of $s$-MRD codes]\label{cor:sMRD_exist} Assume that $km$ is even. Then, for every $k\leq n \leq \frac{km}{2}$ there exists nondegenerate $\Fmk$ codes that are $(k-1)$-MRD.  
\end{corollary}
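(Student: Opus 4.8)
The plan is to build the required $(k-1)$-MRD codes geometrically, via Corollary~\ref{cor:hscattered_sMRD}: an $\Fmk$ code is $(k-1)$-MRD if and only if its associated $q$-system is $1$-scattered, i.e.\ scattered. So the task reduces to producing, for every $k\le n\le \frac{km}{2}$ (with $km$ even), a nondegenerate scattered $\Fmk$ system $\mU\subseteq\Fm^k$. The starting point is the well-known fact that scattered subspaces of maximum dimension $\frac{km}{2}$ exist whenever $km$ is even; this is exactly the ``maximum scattered'' case, and such subspaces are guaranteed by the constructions recalled in the preliminaries (the geometric MRD families with $m$ even and $n=\frac{mk}{2}$, or equivalently the Delsarte--Gabidulin route when one recasts parameters). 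Granting one such maximum scattered $\Fmk$ system $\mW$ of dimension $\frac{km}{2}$, the remaining work is a dimension-reduction argument.

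The key step is: \textbf{any $\Fq$-subspace of a scattered $\Fmk$ system is again scattered}, and if we cut down carefully it stays nondegenerate (i.e.\ still $\Fm$-spans $\Fm^k$). The first half is immediate from the definition — being scattered means every $\Fm$-hyperplane (indeed every $1$-dimensional $\Fm$-subspace) meets the system in an $\Fq$-space of dimension $\le 1$, and this property is clearly inherited by subspaces. So for any $n$ with $k\le n\le\frac{km}{2}$ I would take $\mU$ to be an $n$-dimensional $\Fq$-subspace of $\mW$; it is automatically scattered. The only thing to verify is that $\mU$ can be chosen with $\langle\mU\rangle_{\Fm}=\Fm^k$, which is where the hypothesis $n\ge k$ is used: since $\mW$ spans $\Fm^k$ over $\Fm$, one can pick $k$ vectors of $\mW$ that form an $\Fm$-basis of $\Fm^k$, and then extend this $\Fq$-linearly independent set to an $n$-dimensional $\Fq$-subspace of $\mW$ (possible because $n\le\dim_{\Fq}\mW=\frac{km}{2}$). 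This $\mU$ is a nondegenerate scattered $\Fmk$ system.

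Finally, translate back: by Theorem~\ref{thm:correspondence_codes_systems} the code $\C=\Psi([\mU])$ is a nondegenerate $\Fmk$ code, and by Corollary~\ref{cor:hscattered_sMRD} (with $h=1$) the fact that $\mU$ is scattered gives precisely that $\C$ is $(k-1)$-MRD. Since $n$ was an arbitrary integer in $[k,\frac{km}{2}]$, this proves the statement.

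\textbf{Main obstacle.} The one genuine input that is not a triviality is the existence of a maximum scattered $\Fmk$ system when $km$ is even; everything after that is elementary inheritance and a spanning-set extension argument. If one wants the proof to be self-contained rather than citing the geometric MRD constructions, the honest fallback is to invoke a Delsarte--Gabidulin $[km/2,\,\cdot\,]$-type object and verify its scatteredness, but within the framework of this paper it is cleanest to simply cite the known maximum-scattered existence result and then do the two-line subspace/extension argument above. A minor point to be careful about is the degenerate-looking boundary $n=k$, where $\mU$ is forced to be $\Fm$-free of rank $k$ — it is still scattered (trivially) and the corresponding code is the whole space $\Fm^k$, whose only nonzero generalized weight positions make it vacuously $(k-1)$-MRD; this edge case should be mentioned but causes no trouble.
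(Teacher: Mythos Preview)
Your proposal is correct and follows essentially the same approach as the paper: start from a maximum scattered $[\tfrac{km}{2},k]_{q^m/q}$ system (citing the known existence result, Theorem~\ref{thm:existence_maxScattered}), pass to an $n$-dimensional $\Fq$-subspace which is automatically still scattered, and then invoke Corollary~\ref{cor:hscattered_sMRD} with $h=1$. You are in fact slightly more careful than the paper in explicitly arranging that the chosen subspace still $\Fm$-spans $\Fm^k$ (so that it is a genuine $q$-system and the associated code is nondegenerate); the paper's proof leaves this implicit.
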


\begin{proof}
 The existence of a scattered $[\frac{km}{2},k]_{q^m/q}$ system $\mU$  has been proved in \cite{csajbok2017maximum} (see also Theorem \ref{thm:existence_maxScattered}). Clearly, any $n$-dimensional $\Fq$-subspace of $\mU$ is still scattered. The existence of an $\Fmk$ $(k-1)$-MRD then follows from Corollary \ref{cor:hscattered_sMRD}.
\end{proof}

\begin{corollary}
Assume  $0<k<n$, let $\mU$ be a nondegenerate $\Fmkd$ system with $d\geq 2$ and let $\mU^{\perp}$ be the rank-metric dual, which is an $[n,n-k]_{q^m/q}$ system. Then 
$\mU$ is $(h,r)$-evasive if and only if $\mU^{\perp}$ is $(n-k-r+h-1,n-r-2)$ evasive.
\end{corollary}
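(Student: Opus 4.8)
The plan is to deduce this corollary directly from Theorem~\ref{thm:evasive_generalizedweights} applied simultaneously to $\C$ and to $\C^\perp$, using that the rank-metric dual $\mU^\perp$ is by definition a $q$-system in $\Phi([\C^\perp])$. Since $\mU$ is nondegenerate with $d\geq 2$, Proposition~\ref{prop:nondegenerate} guarantees that $\C$ and $\C^\perp$ are both nondegenerate, so Theorem~\ref{thm:evasive_generalizedweights} is applicable to each of them.

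First I would translate the hypothesis. By Theorem~\ref{thm:evasive_generalizedweights} (equivalence of (1) and (3) read for the code $\C$), the statement ``$\mU$ is $(h,r)$-evasive'' is equivalent to $\dd_{\rk,r-h+1}(\C^\perp)\geq r+2$. Next I would interpret the desired conclusion. We want ``$\mU^\perp$ is $(h',r')$-evasive'' with $h'=n-k-r+h-1$ and $r'=n-r-2$; here $\mU^\perp\in\Phi([\C^\perp])$ and $\C^\perp$ has length $n$ and dimension $n-k$, playing the role of ``$\C$'' in Theorem~\ref{thm:evasive_generalizedweights}. Applying the equivalence of (1) and (2) of that theorem to the code $\C^\perp$ with parameters $(h',r')$, the condition ``$\mU^\perp$ is $(h',r')$-evasive'' becomes $\dd_{\rk,(n-k)-h'}(\C^\perp)\geq n-r'$. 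Substituting $h'=n-k-r+h-1$ gives $(n-k)-h' = r-h+1$, and substituting $r'=n-r-2$ gives $n-r' = r+2$. Hence the conclusion is exactly $\dd_{\rk,r-h+1}(\C^\perp)\geq r+2$, which is the very same statement we obtained from the hypothesis. Therefore the two conditions are literally equivalent, and the corollary follows.

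The only points that need a little care are the bookkeeping of indices and a check that the indices fall in the admissible ranges where Theorem~\ref{thm:evasive_generalizedweights} and the generalized rank weights are defined: we need $h'<n-k\leq n$ (i.e. $h'$ is a valid ``$h$'' for a code of dimension $n-k$), which amounts to $n-k-r+h-1<n-k$, equivalent to $h\leq r$; but $h\leq r$ is forced whenever $\mU$ is $(h,r)$-evasive (and in the degenerate case $r<h$ both sides of the claimed equivalence are vacuously false, so there is nothing to prove). Similarly $r-h+1\geq 1$ is automatic. I do not foresee a genuine obstacle here: the whole argument is a symmetry statement, a direct unwinding of Wei-type duality already packaged inside Theorem~\ref{thm:evasive_generalizedweights}. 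The main thing to get right is simply presenting the index substitution cleanly so the reader sees that hypothesis and conclusion collapse to the identical inequality on $\dd_{\rk,r-h+1}(\C^\perp)$.
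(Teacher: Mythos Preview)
Your proposal is correct and follows exactly the approach of the paper: the paper's proof simply says the parameters ``can be derived from the equivalent properties in Theorem~\ref{thm:evasive_generalizedweights},'' and your argument is the explicit unwinding of that derivation via the substitutions $(n-k)-h'=r-h+1$ and $n-r'=r+2$. Your additional bookkeeping on the admissible index ranges is a welcome clarification that the paper omits.
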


\begin{proof}
  Since $d\geq 2$, we have that  $\mU^{\perp}$ is well-defined and it is an $[n,n-k]_{q^m/q}$ system. The parameters  can be derived from the equivalent properties in Theorem \ref{thm:evasive_generalizedweights}
\end{proof}

\subsection{Bounds on evasive subspaces}

Other important results can be derived from Theorem \ref{thm:evasive_generalizedweights}. Here, we focus on inequalities among their parameters. The first one, is the known upper bound on the maximum rank of an $h$-scattered subspace. Our  proof is immediate and only consists of few lines. 

\begin{corollary}[$h$-Scattered Bound \textnormal{\cite[Theorem 2.3]{csajbok2021generalising}}]\label{cor:hscattered_bound}
Let $\mU$ be an $h$-scattered $\Fmk$ system. Then
$$ n\leq \frac{km}{h+1}.$$
\end{corollary}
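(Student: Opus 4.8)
The plan is to deduce the bound directly from Theorem~\ref{thm:evasive_generalizedweights} together with the Singleton-type bound on generalized rank weights (Proposition~\ref{prop:bounds_genweights}). First I would let $\C\in\Psi([\mU])$ be a nondegenerate $\Fmk$ code associated to $\mU$; such a code exists and is nondegenerate precisely because $\mU$ is a $q$-system. Since $\mU$ is $h$-scattered, it is $(h,h)$-evasive, so applying Theorem~\ref{thm:evasive_generalizedweights} with $r=h$ gives $\dd_{\rk,k-h}(\C)\geq n-h$.

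Next I would feed this inequality into the bound of Proposition~\ref{prop:bounds_genweights} with $s=k-h$. The relevant term on the right-hand side of \eqref{eq:singbound_genweights} is $\frac{m}{n}(n-k)+m(s-1)+1$, which for $s=k-h$ reads $\frac{m}{n}(n-k)+m(k-h-1)+1$. Combining,
$$ n-h \;\leq\; \dd_{\rk,k-h}(\C) \;\leq\; \frac{m}{n}(n-k)+m(k-h-1)+1. $$
Rearranging, $n-h-1 \leq m - \frac{mk}{n} + m(k-h-1) = m(k-h) - \frac{mk}{n}$, i.e. $n - h - 1 \leq m(k-h) - \frac{mk}{n}$. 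Multiplying through by $n>0$ and collecting terms yields $n^2 - n(h+1) \leq mn(k-h) - mk$, hence $n^2 - n\big(m(k-h) + h + 1\big) + mk \leq 0$.

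Finally I would observe that this quadratic inequality in $n$ forces $n$ to be small; the cleanest route is to note that it is equivalent to $(h+1)n \leq m(k-h)n - n^2 + mk + n$, and more directly one checks that $n^2 \leq mn(k-h) - mk + n(h+1) \leq mn(k-h)$ would already essentially give the claim, but to get the exact constant one argues as follows: from $n-h-1 \leq m(k-h) - \frac{mk}{n}$ and $\frac{mk}{n} \geq \frac{mk}{n}$, rewrite as $n + \frac{mk}{n} \leq m(k-h) + h + 1$; since by AM–GM $n + \frac{mk}{n} \geq 2\sqrt{mk}$ this is not quite what we want, so instead I would simply solve the quadratic: $n \leq \frac{1}{2}\Big( m(k-h)+h+1 + \sqrt{(m(k-h)+h+1)^2 - 4mk}\Big)$, and a direct estimate shows the right-hand side is at most $\frac{km}{h+1}$. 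The cleanest packaging, which I would actually write, avoids the quadratic formula entirely: rewrite $n^2 - n\big(m(k-h)+h+1\big) + mk \leq 0$ as $n\big(m(k-h)+h+1 - n\big) \geq mk$, so $m(k-h)+h+1-n > 0$ and moreover $n \leq \frac{mk}{\,m(k-h)+h+1-n\,}$; since the denominator is a positive integer, if it is at least $h+1$ we are done, and the remaining cases $n \geq m(k-h)+1, \dots, m(k-h)+h$ are checked directly against $n \leq \frac{km}{h+1}$ to be impossible (or absorbed into the trivial range).

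I expect the only genuinely delicate point to be this last arithmetic manipulation turning the quadratic inequality $n^2 - n(m(k-h)+h+1) + mk \le 0$ into the clean statement $n \le \frac{km}{h+1}$; everything before it is a mechanical combination of Theorem~\ref{thm:evasive_generalizedweights} and Proposition~\ref{prop:bounds_genweights}. In the write-up I would likely present the shortest version: set $s=k-h$, get $n-h \le \dd_{\rk,s}(\C) \le \frac{m}{n}(n-k)+m(k-h-1)+1$, clear denominators, and simplify to $(h+1)n \le km$, checking the algebra carefully since this is where an off-by-one error would hide.
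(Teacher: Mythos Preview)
There is a genuine gap. Your derivation up to the quadratic inequality
\[
n^2 - n\big(m(k-h)+h+1\big) + mk \le 0
\]
is correct, but your final claim that this ``simplifies to $(h+1)n\le km$'' is false: the quadratic inequality is strictly \emph{weaker} than the $h$-scattered bound. For instance, take $h=1$, $k=3$, $m=4$. Then $\frac{km}{h+1}=6$, while your quadratic becomes $n^2-10n+12\le 0$, whose larger root is $(10+\sqrt{52})/2\approx 8.6$, so it only forces $n\le 8$. Similarly, for $h=1$, $k=4$, $m=4$ the quadratic allows $n$ up to $12$, while the correct bound is $n\le 8$. No amount of algebraic massaging will recover the lost information: you have used condition~(2) of Theorem~\ref{thm:evasive_generalizedweights} together with the third term of Proposition~\ref{prop:bounds_genweights}, and that combination is simply not sharp enough.

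The fix---and what the paper does---is to use condition~(3) of Theorem~\ref{thm:evasive_generalizedweights} instead: since $\mU$ is $(h,h)$-evasive, one gets $\dd_{\rk,1}(\C^\perp)\ge h+2$. Now apply the Singleton bound \eqref{eq:singleton} to the \emph{dual} code $\C^\perp$, which is an $[n,n-k]_{q^m/q}$ code:
\[
m(n-k)\le n\big(m-\dd_{\rk}(\C^\perp)+1\big)\le n(m-h-1),
\]
and this rearranges in one line to $n(h+1)\le km$. The passage through the dual code is what makes the algebra collapse; bounding $\dd_{\rk,k-h}(\C)$ directly cannot achieve this.
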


\begin{proof}
 Let $\C\in \Psi([\mU])$ be any (nondegenerate) $\Fmk$ code associated to $\mU$. By Theorem \ref{thm:evasive_generalizedweights}, we have $\dd_{\rk}(\C^\perp)=\dd_{\rk,1}(\mC^\perp)\geq h+2$. On the other hand, by  \eqref{eq:singleton}, we also have 
 $$m(n-k)\leq n(m-\dd(\C^\perp)+1).$$
 Combining the two inequalities, we obtain the desired result.
\end{proof}

We now derive bounds on the dimension  for the special case of $(k-1,r)$-evasive subspaces. The following result is essentially \cite[Theorem 4.2]{bartoli2021evasive}.

\begin{corollary}\label{cor:k-1evasive}
 Let $\mU$ be a $(k-1,r)$-evasive $\Fmk$ system. Then
 $$ km \leq n(m-n+r+1).$$
\end{corollary}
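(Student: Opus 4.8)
The plan is to mimic the proof strategy just used for Corollary~\ref{cor:hscattered_bound}, passing through a nondegenerate code associated to $\mU$, translating the evasive hypothesis into a lower bound on a generalized rank weight of the dual code via Theorem~\ref{thm:evasive_generalizedweights}, and then invoking the Singleton-type bound of Proposition~\ref{prop:bounds_genweights}. Concretely, let $\C\in\Psi([\mU])$ be any nondegenerate $[n,k]_{q^m/q}$ code associated to $\mU$. Applying Theorem~\ref{thm:evasive_generalizedweights} with $h=k-1$, the hypothesis that $\mU$ is $(k-1,r)$-evasive is equivalent to $\dd_{\rk,r-k+2}(\C^\perp)\geq r+2$.

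Next I would feed this into the bound of Proposition~\ref{prop:bounds_genweights} applied to the dual code $\C^\perp$, which is an $[n,n-k]_{q^m/q}$ code. Taking $s=r-k+2$ in \eqref{eq:singbound_genweights} gives
$$\dd_{\rk,r-k+2}(\C^\perp)\leq \frac{m}{n}\big(n-(n-k)\big)+m(r-k+2-1)+1=\frac{mk}{n}+m(r-k+1)+1.$$
Combining this with the lower bound $\dd_{\rk,r-k+2}(\C^\perp)\geq r+2$ yields
$$r+2\leq \frac{mk}{n}+m(r-k+1)+1,$$
and multiplying through by $n$ and rearranging gives $km\leq n(m-n+r+1)$, as desired. (One should check the corner cases: if $r-k+2\leq 0$ then $\mU$ is not $(k-1,r)$-evasive at all by the remark that there is no $(h,r)$-evasive system when $r<h$, so we may assume $r\geq k-1$; and $r-k+2\leq n-k$ must hold for the generalized weight to be defined, which is automatic since $r\leq n$, with the boundary handled by $\dd_{\rk,n-k}(\C^\perp)=n$.)

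The only genuinely delicate point is making sure one selects the correct one of the three terms in the minimum of \eqref{eq:singbound_genweights}—namely the third, ``$\tfrac{m}{n}(n-k')+m(s-1)+1$'' term with $k'=n-k$—and checks that the argument does not secretly require $m\geq n$ (it does not, since we are using exactly the term that is relevant when $m<n$). Everything else is routine algebra, so I expect no real obstacle; this is a short corollary in the same spirit as the preceding ones, and the main "work" is bookkeeping the index shifts in Theorem~\ref{thm:evasive_generalizedweights} correctly.
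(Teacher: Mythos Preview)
Your final algebraic step is wrong, and the error is structural rather than a slip: the inequality you derive goes the wrong way. From
\[
r+2 \;\le\; \frac{mk}{n}+m(r-k+1)+1
\]
you get, after multiplying by $n$ and collecting terms,
\[
mk \;\ge\; n\big[(r+1)-m(r-k+1)\big],
\]
which is a \emph{lower} bound on $km$, not the desired upper bound $km\le n(m-n+r+1)$. (For a quick sanity check, take $k=2$, $m=3$, $n=5$, $r=2$: your displayed inequality reads $3\le 4.2$, which holds, while the target $6\le 5$ fails.) The problem is that you chose condition~(3) of Theorem~\ref{thm:evasive_generalizedweights} and applied the third term of \eqref{eq:singbound_genweights} to $\C^\perp$; that combination simply does not produce the claimed bound.

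The paper instead uses condition~(2) of Theorem~\ref{thm:evasive_generalizedweights}: for $h=k-1$ this is exactly $\dd_{\rk,1}(\C)=\dd_{\rk}(\C)\ge n-r$, a bound on the \emph{primal} minimum distance. Feeding $d\ge n-r$ into the Singleton bound \eqref{eq:singleton} (equivalently, the third term of \eqref{eq:singbound_genweights} for $\C$ with $s=1$) gives
\[
mk \;\le\; n(m-d+1) \;\le\; n\big(m-(n-r)+1\big)=n(m-n+r+1),
\]
which is the statement. So the fix is simply to switch from the dual formulation to the primal one; no new ideas are needed, but the choice of which equivalence in Theorem~\ref{thm:evasive_generalizedweights} to use is not interchangeable here.
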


\begin{proof}
 It immediately follows combining Theorem \ref{thm:evasive_generalizedweights} with the Singleton bound in \eqref{eq:singleton}.
\end{proof}

Furthermore, we can also deduce bounds on general $(h,r)$-evasive subspaces.

\begin{corollary}[Evasive Bounds]\label{cor:evasive_bounds}
Let $\mU$ be an $(h,r)$-evasive $\Fmk$ system. The following hold.
\begin{enumerate}\setlength\itemsep{1em}
    \item $r \geq \max\{h,h-1+\frac{h+1}{m-1}\}.$
    \item  $n\leq km-hm+r$ (see \cite[Theorem 4.3]{bartoli2021evasive}). 
    \item If $r\leq \frac{m}{m-1}h$, then
$$ n\leq \frac{km}{r+1-m(r-h)}.$$
\end{enumerate}

\end{corollary}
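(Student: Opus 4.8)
The plan is to derive all three inequalities in Corollary \ref{cor:evasive_bounds} directly from Theorem \ref{thm:evasive_generalizedweights} combined with the generalized-weight bounds of Proposition \ref{prop:bounds_genweights}, applied either to $\C$ or to its dual $\C^\perp$. Throughout, fix a nondegenerate $\Fmk$ code $\C$ with $\mU\in\Phi([\C])$; by Theorem \ref{thm:evasive_generalizedweights}, the hypothesis that $\mU$ is $(h,r)$-evasive is equivalent to $\dd_{\rk,k-h}(\C)\geq n-r$, and also to $\dd_{\rk,r-h+1}(\C^\perp)\geq r+2$. These two reformulations are exactly the leverage points: each part of the corollary will come from imposing the relevant upper bound of \eqref{eq:singbound_genweights} on one of these two generalized weights.

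For part (1), I would feed $s=r-h+1$ and the code $\C^\perp$ (which has length $n$ and dimension $n-k$) into Proposition \ref{prop:bounds_genweights}: this gives $\dd_{\rk,r-h+1}(\C^\perp)\leq \min\{\,n-(n-k)+(r-h+1),\ (r-h+1)m,\ \tfrac{m}{n}k+m(r-h)+1\,\}$. Since $\dd_{\rk,r-h+1}(\C^\perp)\geq r+2$, comparing $r+2$ with each of the three terms yields constraints. The term $k+r-h+1\geq r+2$ just says $k\geq h+1$, which holds by hypothesis and gives nothing. The term $(r-h+1)m\geq r+2$ rearranges (using $m\geq 2$, and a short manipulation separating the $r-h$ and the constants) to $r-h\geq \tfrac{2-m}{m-1}$, i.e.\ $r\geq h$ once one checks the integrality; actually the cleaner route to $r\geq h$ is simply the trivial remark already recorded after the definition of evasive systems, so I would cite that and then use the middle term to squeeze out the stronger bound. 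Most importantly, the third term gives $\tfrac{m}{n}k+m(r-h)+1\geq r+2$; but I also need to eliminate $n$ here --- and this is where part (2) comes in, so I would prove (2) first and substitute. Alternatively, rewriting $r\geq h-1+\tfrac{h+1}{m-1}$ is equivalent to $(r-h+1)(m-1)\geq h+1$, i.e.\ to $(r-h+1)m\geq (r-h+1)+(h+1)=r+2$, which is precisely the middle-term inequality above --- so part (1)'s second bound is literally just "$\dd_{\rk,r-h+1}(\C^\perp)\leq (r-h+1)m$ together with $\dd_{\rk,r-h+1}(\C^\perp)\geq r+2$." That makes (1) a two-line argument with no reference to $n$ at all.

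For part (2), I would instead use the first reformulation: $\dd_{\rk,k-h}(\C)\geq n-r$, and apply the middle term of \eqref{eq:singbound_genweights} to $\C$ itself with $s=k-h$, giving $\dd_{\rk,k-h}(\C)\leq (k-h)m$. Hence $n-r\leq (k-h)m=km-hm$, which is exactly $n\leq km-hm+r$. For part (3), I would again use $\dd_{\rk,r-h+1}(\C^\perp)\geq r+2$ but now invoke the \emph{third} term of \eqref{eq:singbound_genweights} applied to $\C^\perp$ (length $n$, dimension $n-k$, $s=r-h+1$): $\dd_{\rk,r-h+1}(\C^\perp)\leq \tfrac{m}{n}(n-(n-k))+m(r-h)+1=\tfrac{km}{n}+m(r-h)+1$. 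Combining with $\dd_{\rk,r-h+1}(\C^\perp)\geq r+2$ gives $\tfrac{km}{n}\geq r+1-m(r-h)$, and under the hypothesis $r\leq \tfrac{m}{m-1}h$ the right-hand side $r+1-m(r-h)$ is positive (this needs a quick check: $r+1-m(r-h)>0 \iff m(r-h)<r+1 \iff$ roughly $r-h<\tfrac{r+1}{m}$, which follows from $r\le\tfrac{m}{m-1}h$ after rearrangement), so one may divide and obtain $n\leq \tfrac{km}{r+1-m(r-h)}$.

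The only genuinely delicate point is the positivity check needed in part (3) to justify dividing by $r+1-m(r-h)$ without flipping the inequality; I would handle it by the explicit rearrangement $r\le \tfrac{m}{m-1}h \Rightarrow r(m-1)\le mh \Rightarrow rm - r \le mh \Rightarrow m(r-h)\le r < r+1$, hence $r+1-m(r-h)\geq 1>0$. Everything else is immediate substitution of the three bounds of Proposition \ref{prop:bounds_genweights} into the two equivalent conditions of Theorem \ref{thm:evasive_generalizedweights}, so the proof will be only a handful of lines per item. I would also remark that (2), specialized or combined with (1), recovers Corollary \ref{cor:hscattered_bound} when $h=r$ and Corollary \ref{cor:k-1evasive} when $h=k-1$, to stress the unifying nature of the argument.
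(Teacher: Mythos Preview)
Your proposal is correct and follows essentially the same route as the paper: each item is obtained by pairing one of the two equivalent reformulations of $(h,r)$-evasiveness from Theorem \ref{thm:evasive_generalizedweights} with the appropriate term of the minimum in Proposition \ref{prop:bounds_genweights}, applied to $\C$ for part (2) and to $\C^\perp$ for parts (1) and (3). If anything, your write-up is slightly more careful than the paper's, since you explicitly verify the positivity of $r+1-m(r-h)$ in part (3) before dividing, whereas the paper leaves this implicit.
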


\begin{proof}
 Let $\mU$ be an $(h,r)$-evasive subspace, and let $\C \in \Psi([\mU])$ be any (nondegenerate) $\Fmk$ code associated. 
 \begin{enumerate}
\item By Theorem \ref{thm:evasive_generalizedweights}, we have $\dd_{\rk,r-h+1}(\mC^\perp)\geq r+2$. On the other hand, by  Proposition \ref{prop:bounds_genweights}, we also have $\dd_{\rk,r-h+1}(\mC^\perp)\le (r-h+1)m$. Combining the two bounds we obtain the desired result.
\item By Theorem \ref{thm:evasive_generalizedweights}, we have $\dd_{\rk,k-h}(\mC)\geq n-r$. On the other hand, by Propostion \ref{prop:bounds_genweights}, we also have 
 $\dd_{\rk,k-h}(\C^\perp)\leq(k-h)m$, and combining the inequalities we obtain the desired bound.
\item By Theorem \ref{thm:evasive_generalizedweights}, we have $\dd_{\rk,r-h+1}(\mC^\perp)\geq r+2$. On the other hand, by  Proposition \ref{prop:bounds_genweights}, we also have 
 $$\dd_{\rk,r-h+1}(\C^\perp)\leq\frac{m}{n}k+m(r-h)+1 $$
 Combining the two inequalities, we obtain the desired result.
 \end{enumerate}
\end{proof}

We conclude with the following nonexistence result for $h$-scattered subspaces.

\begin{corollary}\label{cor:existence_hscattered}
 If $m<h+2$, there are no $h$-scattered $\Fmk$ systems.
\end{corollary}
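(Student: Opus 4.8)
The plan is to derive the nonexistence of $h$-scattered $\Fmk$ systems when $m<h+2$ directly from Corollary \ref{cor:evasive_bounds}(1), specialized to the scattered case $r=h$. Indeed, an $h$-scattered system is by definition $(h,h)$-evasive, so if such a system existed we could apply part (1) of Corollary \ref{cor:evasive_bounds} with $r=h$.

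The key step is the following: substituting $r=h$ into the inequality $r \geq h-1+\frac{h+1}{m-1}$ gives $h \geq h-1+\frac{h+1}{m-1}$, i.e. $\frac{h+1}{m-1}\leq 1$, which forces $m-1\geq h+1$, that is $m\geq h+2$. Contrapositively, if $m<h+2$, no $(h,h)$-evasive system can exist, hence no $h$-scattered $\Fmk$ system exists. One should only be mildly careful about the edge cases: we need $m\geq 2$ for $\Fm$ to be a proper extension (so $m-1\geq 1$ and the division is legitimate), and the statement is vacuous or trivial when $h\geq k$ since there are no $h$-dimensional proper subspaces to test; but the interesting content is exactly the regime $h<k\leq n$ with $2\leq m<h+2$, where the argument above applies verbatim.

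Alternatively, and perhaps even more transparently, one can invoke Corollary \ref{cor:hscattered_bound}: an $h$-scattered $\Fmk$ system satisfies $n\leq \frac{km}{h+1}$, while nondegeneracy and $\langle\mU\rangle_{\Fm}=\Fm^k$ force $n\geq k$ (a $q$-system spanning $\Fm^k$ has $\Fq$-dimension at least $k$). Combining, $k\leq \frac{km}{h+1}$ gives $h+1\leq m$, i.e. $m\geq h+1$. This weaker bound alone does not quite reach $m\geq h+2$, so the sharper route through the evasive bound of Corollary \ref{cor:evasive_bounds}(1) — which ultimately rests on the generalized-weight bound $\dd_{\rk,1}(\C^\perp)\leq m$ applied to $\dd_{\rk,1}(\C^\perp)\geq h+2$ from Theorem \ref{thm:evasive_generalizedweights} — is the one to use. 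In fact this makes the whole proof a one-liner: pick $\C\in\Psi([\mU])$, note $h+2\leq \dd_{\rk}(\C^\perp)=\dd_{\rk,1}(\C^\perp)\leq m$ by Theorem \ref{thm:evasive_generalizedweights} and \eqref{eq:singleton} (equivalently Proposition \ref{prop:bounds_genweights} with $s=1$), contradicting $m<h+2$.

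I do not anticipate any real obstacle here; the only thing to get right is to phrase the contradiction cleanly and to make sure the hypotheses of Theorem \ref{thm:evasive_generalizedweights} are in force, namely that $\mU$ (hence $\C$) is nondegenerate, which is part of the standing definition of a $q$-system, and that $\C^\perp$ is itself nondegenerate — but this last point is not even needed, since we only use the Singleton bound on $\dd_{\rk}(\C^\perp)$, which holds unconditionally for any nonzero code. So the proof reduces to: assume a $h$-scattered $\Fmk$ system $\mU$ exists, take any $\C\in\Psi([\mU])$, apply Theorem \ref{thm:evasive_generalizedweights} with $r=h$ to get $\dd_{\rk,1}(\C^\perp)\geq h+2$, and contradict the Singleton bound $\dd_{\rk,1}(\C^\perp)\leq m$.
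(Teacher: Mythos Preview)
Your proof is correct and follows exactly the paper's approach: the paper's proof is the one-liner ``It follows directly from Corollary~\ref{cor:evasive_bounds}(1),'' and your substitution $r=h$ into $r\geq h-1+\frac{h+1}{m-1}$ to force $m\geq h+2$ is precisely that. Your additional unpacking via Theorem~\ref{thm:evasive_generalizedweights} and the Singleton bound is just the proof of Corollary~\ref{cor:evasive_bounds}(1) made explicit, so it is the same argument at a finer level of detail.
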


\begin{proof}
 It follows directly from Corollary \ref{cor:evasive_bounds}(1).
\end{proof}

\subsection{Maximum $h$-scattered subspaces and MRD codes}
In this section we derive already known connections between MRD codes and maximum $h$-scattered subspaces of $\Fm^k$. However, although the results are already known, using our approach we can derive them in a very simple and elementary way, showing how powerful is the result of Theorem \ref{thm:evasive_generalizedweights}.

An $\Fmk$ system is said to be \textbf{maximum $h$-scattered} if it is $h$-scattered and meets the bound of Corollary \ref{cor:hscattered_bound} with equality, that is,
$$ n=\frac{km}{h+1}.$$
We point out that maximum $h$-scattered $\Fmk$ systems do exist.

\begin{theorem}\label{thm:existence_maxScattered} Let $k,m$ be positive integers and let $q$ be a prime power. The following hold.
\begin{enumerate}
 \item There exist maximum $(k-1)$-scattered $[m,k,m-k+1]_{q^m/q}$ systems (see \cite{delsarte1978bilinear}).
  \item Assume that $km$ is even. Then, there exist maximum scattered $[\frac{km}{2},k,]_{q^m/q}$ systems (see \cite{blokhuis2000scattered,bartoli2018maximum} for the first partial results and \cite{csajbok2017maximum} for the complete proof).
  \end{enumerate}
\end{theorem}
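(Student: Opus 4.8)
The plan is to deduce both statements from the code–system dictionary, using Corollary \ref{cor:hscattered_sMRD} together with the $h$-scattered bound of Corollary \ref{cor:hscattered_bound}, thereby reducing each claim to the existence of a suitable MRD (respectively scattered) object that is already available in the literature.

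For item (1), I would first observe that Corollary \ref{cor:hscattered_bound} with $h=k-1$ forces every $(k-1)$-scattered $[n,k]_{q^m/q}$ system to satisfy $n\leq m$; hence a maximum $(k-1)$-scattered system is precisely one of dimension $n=m=\frac{km}{(k-1)+1}$. By Corollary \ref{cor:hscattered_sMRD}, an $[m,k]_{q^m/q}$ system $\mU$ is $(k-1)$-scattered exactly when the associated code $\C\in\Psi([\mU])$ is $(k-(k-1))=1$-MRD, and when $n=m$ being $1$-MRD coincides with being MRD, with minimum rank distance $m-k+1$. Such codes are the Delsarte–Gabidulin codes, which exist for all $k\leq m$ by \cite{delsarte1978bilinear,gabidulin1985theory} and are nondegenerate; pushing one of them through $\Phi$ yields the desired maximum $(k-1)$-scattered $[m,k,m-k+1]_{q^m/q}$ system. (One should only double-check the trivial boundary case $k=m$, where a maximum $(m-1)$-scattered $[m,m]_{q^m/q}$ system is nothing but a nondegenerate one.)

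For item (2), maximality means $n=\frac{km}{2}=\frac{km}{1+1}$, so by Corollary \ref{cor:hscattered_sMRD} we are asking for a scattered $[\frac{km}{2},k]_{q^m/q}$ system, equivalently for a nondegenerate $(k-1)$-MRD $[\frac{km}{2},k]_{q^m/q}$ code. When $m$ is even this is exactly what is produced in \cite{csajbok2017maximum} (with earlier partial cases in \cite{blokhuis2000scattered,bartoli2018maximum}), where maximum scattered $\Fq$-subspaces of $\Fm^k$ of dimension $\frac{km}{2}$ are constructed directly. When $m$ is odd, $km$ even forces $k$ even, say $k=2k'$; since $m\geq 2$, item (1) supplies a maximum scattered $[m,2]_{q^m/q}$ system $\mathcal W$, and I would set $\mU=\mathcal W\oplus\cdots\oplus\mathcal W\subseteq(\Fm^2)^{k'}=\Fm^k$, the $k'$-fold block-diagonal sum. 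Then $\dim_{\Fq}\mU=k'm=\frac{km}{2}$ and $\langle\mU\rangle_{\Fm}=\Fm^k$, and scatteredness is immediate: for $v=(v_1,\dots,v_{k'})\neq 0$ the $\Fq$-subspace $\{\lambda\in\Fm:\lambda v\in\mU\}$ equals $\bigcap_{i:\,v_i\neq 0}\{\lambda:\lambda v_i\in\mathcal W\}$, an intersection of $\Fq$-subspaces each of dimension at most $1$ because $\mathcal W$ is scattered.

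The only genuinely hard input is the even-$m$ case of item (2): there is no shortcut through the code correspondence that circumvents the nontrivial geometric construction of \cite{csajbok2017maximum}, so at that point the argument is necessarily a citation. Everything else — the reduction to MRD codes in item (1), and the block-diagonal trick settling the odd-$m$ case of item (2) — is a purely formal consequence of the dictionary established in the previous sections.
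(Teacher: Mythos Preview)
Your proposal is correct. Note, however, that the paper does not actually \emph{prove} Theorem \ref{thm:existence_maxScattered}: it is stated as a summary of existing literature, with the subsequent paragraph only indicating that Part (1) corresponds to Delsarte--Gabidulin codes and that Part (2) was settled for $k$ even by Blokhuis--Lavrauw and for $k$ odd (hence $m$ even) by Bartoli et al.\ and then Csajb\'ok et al. Your write-up therefore provides more than the paper does: you explicitly pass through the code--system dictionary (Corollaries \ref{cor:hscattered_sMRD} and \ref{cor:hscattered_bound}) to identify the required objects, and you give a self-contained direct-sum argument for the even-$k$ case rather than merely citing \cite{blokhuis2000scattered}.

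The one cosmetic difference is your case split in Part (2): you divide according to the parity of $m$ (even $m$: cite; odd $m$, even $k$: direct sum), whereas the paper's commentary divides according to the parity of $k$ (even $k$: Blokhuis--Lavrauw, which is essentially your direct-sum construction; odd $k$, even $m$: the harder geometric construction). Both partitions cover all cases with $km$ even, and in both the genuinely nontrivial input is the same --- the construction for $k$ odd and $m$ even --- so the distinction is purely organizational. Your handling of the boundary case $k=m$ in Part (1) is also a nice touch; the paper's later Corollary \ref{cor:existence_hscattered} tacitly assumes $k<n$, so it does not conflict with the trivial existence of $(m-1)$-scattered $[m,m]_{q^m/q}$ systems.
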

It is important to remark the fact that both the existence results are equipped with a constructive proof. Part (1) is the case of the well-known Gabidulin codes (see also \cite{gabidulin1985theory}), even though other constructions are known \cite{sheekey2016new}. Part (2) was first proved by Blokhuis and Lavrauw in \cite{blokhuis2000scattered} when $k$ is even. The case $k$ odd and $m$ even was first addressed in \cite{bartoli2018maximum} for almost all the parameters, and then completed in \cite{csajbok2017maximum}.

In light of Theorem \ref{thm:evasive_generalizedweights}, we can deduce a further connection between the largest $h$-scattered subspaces and special rank-metric codes.

\begin{theorem}[\textnormal{\cite[Theorem 3.2]{zini2021scattered}}]\label{thm:hscattered_MRD}
 Suppose that $h+1$ divides $km$ and let $n:=\frac{km}{h+1}$. Let $\mU$ be an $\Fmk$ system and let $\mC\in \Psi([\mU])$ be any of its associated $\Fmk$ codes. Then, $\mU$ is maximum $h$-scattered if and only if $\mC$ is an MRD code.
\end{theorem}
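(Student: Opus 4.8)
The plan is to translate everything into the language of generalized rank weights via Theorem~\ref{thm:evasive_generalizedweights} and Corollary~\ref{cor:hscattered_sMRD}, and then check that, under the hypothesis $n=\frac{km}{h+1}$, being $(k-h)$-MRD is equivalent to being MRD. First I would invoke Corollary~\ref{cor:hscattered_sMRD}: a nondegenerate $\Fmk$ code $\C$ with associated $q$-system $\mU$ is such that $\mU$ is $h$-scattered if and only if $\C$ is $(k-h)$-MRD, i.e. $\dd_{\rk,k-h}(\C)=n-k+(k-h)=n-h$. By the monotonicity in Proposition~\ref{prop:gen_weights_properties}(1), this forces $\dd_{\rk,k-h+i}(\C)=n-h+i$ for all $0\le i\le h$, so in particular the top weights are all as large as possible. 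The remaining content is to see that when $n=\frac{km}{h+1}$, this is equivalent to $\dd_{\rk,1}(\C)=\dd_{\rk}(\C)=n-k+1$, i.e. $\C$ is MRD (note that for such parameters $n\le m$ need not hold, so I should check MRD means meeting \eqref{eq:singleton}; but when $h$-scattered systems of this size exist one has $n\le \frac{km}{h+1}\le\frac{km}{2}$, and more to the point the bound \eqref{eq:singbound_genweights} will do the bookkeeping).

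The key computation is the following. Suppose $\mU$ is maximum $h$-scattered, so $\dd_{\rk,k-h}(\C)=n-h$ with $n=\frac{km}{h+1}$. I want to deduce $\dd_{\rk,j}(\C)=n-k+j$ for every $1\le j\le k$. By Proposition~\ref{prop:bounds_genweights} each $\dd_{\rk,j}(\C)\le n-k+j$, and by monotonicity $\dd_{\rk,j}(\C)\le \dd_{\rk,k-h}(\C)-(k-h-j)=n-h-(k-h-j)=n-k+j$ for $j\le k-h$ as well, so the only thing to rule out is a strict drop somewhere below index $k-h$. If $\dd_{\rk,j_0}(\C)\le n-k+j_0-1$ for some $j_0\le k-h$, then by monotonicity $\dd_{\rk,j}(\C)\le n-k+j-1$ for all $j\le j_0$, which by Theorem~\ref{thm:evasive_generalizedweights} (applied with $h'=k-j_0$, $r'=k-1$, say) or directly by the Wei-type duality of Proposition~\ref{prop:gen_weights_properties}(2) pushes up the dual weights $\dd_{\rk,i}(\C^\perp)$; combining this with the third (``$\frac{m}{n}$-type'') term of the bound \eqref{eq:singbound_genweights} applied to $\C^\perp$, and using $n=\frac{km}{h+1}$ to make the inequality tight, yields a contradiction. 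Conversely, if $\C$ is MRD then $\dd_{\rk,1}(\C)=n-k+1$, and by monotonicity and the upper bound $\dd_{\rk,j}(\C)\le n-k+j$ one is squeezed into $\dd_{\rk,j}(\C)=n-k+j$ for all $j$, in particular $j=k-h$, so $\C$ is $(k-h)$-MRD and $\mU$ is $h$-scattered by Corollary~\ref{cor:hscattered_sMRD}; since it has the maximal dimension $n=\frac{km}{h+1}$ allowed by Corollary~\ref{cor:hscattered_bound}, it is maximum $h$-scattered.

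I expect the main obstacle to be the forward direction: showing that maximum $h$-scattered (which a priori only controls $\dd_{\rk,k-h}$) forces \emph{all} lower generalized rank weights to be extremal. The cleanest route is probably to dualize — translate $\dd_{\rk,k-h}(\C)=n-h$ through Theorem~\ref{thm:evasive_generalizedweights} into the statement that $\mU^\perp$ is a $(k-h-1)$-scattered, equivalently the condition $\dd_{\rk,1}(\C^\perp)\ge h+2$ together with the length being exactly on the $h$-scattered bound, and then observe that equality in Corollary~\ref{cor:hscattered_bound} forces the Singleton bound \eqref{eq:singleton} for $\C^\perp$ to be met with equality, i.e. $\C^\perp$ is MRD with $\dd_{\rk}(\C^\perp)=h+2$; MRD-ness of $\C^\perp$ is equivalent to MRD-ness of $\C$, closing the loop. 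This makes the Wei-type duality do all the heavy lifting and keeps the argument to a few lines, matching the paper's stated philosophy.
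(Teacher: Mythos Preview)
Your third paragraph is exactly the paper's proof: Theorem~\ref{thm:evasive_generalizedweights} gives that $\mU$ is $h$-scattered iff $\dd_{\rk}(\C^\perp)\ge h+2$; with $n=\frac{km}{h+1}$ the Singleton bound \eqref{eq:singleton} applied to the $[n,n-k]_{q^m/q}$ code $\C^\perp$ reads $m(n-k)\le n(m-\dd_{\rk}(\C^\perp)+1)$, i.e.\ $\dd_{\rk}(\C^\perp)\le h+2$; hence $h$-scattered is equivalent to $\C^\perp$ being MRD, which (by Delsarte/Gabidulin) is equivalent to $\C$ being MRD. That is the entire argument, and you correctly identified it as the clean route.

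The approach you develop in your first two paragraphs, however, has a genuine gap: you repeatedly identify ``$\C$ is MRD'' with ``$\dd_{\rk,1}(\C)=n-k+1$'' (i.e.\ $1$-MRD). This holds only when $n\le m$, whereas for maximum $h$-scattered systems with $h<k-1$ one has $n=\frac{km}{h+1}>m$ (e.g.\ already for $h=1$, $k\ge 3$). In that regime an MRD code $\C$ has $\dd_{\rk}(\C)=m-h$, \emph{not} $n-k+1$, and the bound $\dd_{\rk,j}(\C)\le n-k+j$ from \eqref{eq:singbound_genweights} is \emph{not} attained for small $j$ (indeed $\dd_{\rk,1}(\C)\le m<n-k+1$). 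So both your forward goal ``deduce $\dd_{\rk,j}(\C)=n-k+j$ for all $j$'' and your converse step ``$\C$ MRD $\Rightarrow$ $\dd_{\rk,1}(\C)=n-k+1$'' fail precisely in the interesting cases. The fix is exactly what you sketch in the last paragraph: pass to $\C^\perp$, where (because $h+2\le k+1$) the binding term in \eqref{eq:singleton} is always $n(m-d+1)$ and MRD genuinely coincides with $\dd_{\rk}(\C^\perp)=h+2$.
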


\begin{proof}
  By Theorem \ref{thm:evasive_generalizedweights}, we have that ${\mU}$ is $h$-scattered if and only if $\dd_{\rk}(\C^{\perp})\geq h+2$. Moreover, the $\Fq$-dimension of ${\mU}$ is $n=\frac{km}{h+1}$ if and only if $\C^{\perp}$ is a $[\frac{km}{h+1},\frac{km}{h+1}-k]_{q^m/q}$ code. On the other hand, by \eqref{eq:singleton}, any $[\frac{km}{h+1},\frac{km}{h+1}-k]_{q^m/q}$ has minimum distance at most $h+2$, and it has minimum distance exactly $h+2$ if and only if \eqref{eq:singleton} is met with equality. By \cite{delsarte1978bilinear,gabidulin1985theory}, $\C^\perp$ is MRD if and only if $\C$ is MRD, and this concludes the proof.
\end{proof}

From this, we immediately derive a very short proof of the following result.

\begin{corollary}[\textnormal{\cite[Theorem 2.7]{csajbok2021generalising}}]\label{cor:maxscatterd_hyperplane_intersection}
 Let ${\mU}$ be a maximum $h$-scattered $\Fmk$ system. Then, for any $\Fm$-hyperplane $H \subseteq \Fm^k$, we have
 $$ \frac{km}{h+1}-m \leq \dim_{\Fq}(\mU\cap H) \leq \frac{km}{h+1}-m+h.$$
\end{corollary}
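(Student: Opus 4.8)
The plan is to translate the statement about hyperplane intersections of $\mU$ into a statement about the rank weights of a codeword, and then apply Theorem \ref{thm:hscattered_MRD} together with the geometric correspondence of Theorem \ref{thm:correspondence_codes_systems}. First I would set $n = \frac{km}{h+1}$ and pick any nondegenerate $\Fmk$ code $\C \in \Psi([\mU])$; by Theorem \ref{thm:hscattered_MRD}, $\C$ is MRD, and since $n = \frac{km}{h+1} \le km$ one checks from \eqref{eq:singleton} that $\dd_{\rk}(\C) = n - k + 1$ (this is the relevant branch of the Singleton bound when $m \ge n$, i.e. when $h+1 \ge k$; in general one notes the MRD condition forces $\dd_{\rk}(\C)$ to meet the minimum of the two bounds, and the hyperplane-intersection bound below is exactly what the correspondence yields regardless of which branch is active).

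Next I would invoke Theorem \ref{thm:correspondence_codes_systems}: codewords of $\C$ of rank weight $w$ correspond exactly to $\Fm$-hyperplanes $H$ of $\Fm^k$ with $\dim_{\Fq}(H \cap \mU) = n - w$. Thus $\dim_{\Fq}(\mU \cap H) = n - w$ for $w$ ranging over the rank weights of nonzero codewords, so I need to bound the possible nonzero rank weights of $\C$ from above and below. The lower bound is just the minimum distance: $w \ge \dd_{\rk}(\C)$, which gives $\dim_{\Fq}(\mU \cap H) \le n - \dd_{\rk}(\C)$. For the upper bound on $w$, I would use that $\mU$ is $h$-scattered, hence $(h,h)$-evasive, so every $\Fm$-hyperplane — being the intersection shrinking argument, or more directly: apply Theorem \ref{thm:evasive_generalizedweights} with the appropriate parameters to control $\dd_{\rk,1}(\C) = \dd_{\rk}(\C)$ and also note that $n \le km - hm + h$ by Corollary \ref{cor:evasive_bounds}(2) with $r = h$ — ultimately $w \le n$ trivially, but I need $w \le m + \dd_{\rk}(\C) - 1$. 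This last inequality is where the hypothesis $h$-scattered (equivalently, the Singleton-type bound on $\dd_{\rk}$ applied to the MRD code $\C$) enters: an MRD code of this length has all its codeword weights confined to an interval of length $m$, which one extracts from $mk \le m(n - \dd + 1)$ type reasoning, or cleaner, from the dual: $\dd_{\rk}(\C^\perp) = h + 2$ by Theorem \ref{thm:hscattered_MRD}'s proof, and weights of $\C$ avoid a large gap forced by Wei duality.

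Concretely, the cleanest route: $\dd_{\rk}(\C) = n - k + 1 = \frac{km}{h+1} - k + 1$, so $\dim_{\Fq}(\mU \cap H) \le n - \dd_{\rk}(\C) = k - 1$... which is not quite the claimed $\frac{km}{h+1} - m + h$ — so I must instead argue the minimum weight controls the \emph{upper} bound on intersection and the \emph{maximum} weight (which is $n$, attained, giving intersection $0$? no) controls the lower bound. Reconsidering: the claimed lower bound $\frac{km}{h+1} - m$ says $\dim_{\Fq}(\mU \cap H) \ge n - m$, i.e. every codeword has weight $w \le m$; and the upper bound says $w \ge n - \frac{km}{h+1} + m - h = m - h$... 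Hmm, so I actually need: \emph{every} nonzero codeword of the MRD code $\C$ has rank weight $w$ with $m - h \le w \le m$. The upper bound $w \le m$ is automatic since rank weight of a vector in $\Fm^n$ is at most $m$. The lower bound $w \ge m - h$ is the real content: it says $\dd_{\rk}(\C) \ge m - h$, which combined with $\dd_{\rk}(\C) = n - k + 1$ needs checking, but more robustly it follows from $\dd_{\rk}(\C^\perp) = h+2$ via the general fact that an MRD code and its dual have complementary weight behavior, or directly from Theorem \ref{thm:evasive_generalizedweights} applied with $r = h$. The main obstacle I anticipate is precisely pinning down this lower bound $\dd_{\rk}(\C) \ge m - h$ cleanly — i.e., showing no codeword of the MRD code $\C$ has rank weight below $m - h$ — and I would handle it by noting that a weight-$w$ codeword corresponds to an $H$ with $\dim_{\Fq}(\mU \cap H) = n - w$, and if $w < m - h$ then $\dim_{\Fq}(\mU \cap H) > n - m + h$; one then derives a contradiction with the $h$-scattered property by a dimension count inside $H \cong \Fm^{k-1}$, since $\mU \cap H$ is then too big to be $(h-1,h)$-evasive — but $\mU \cap H$ inherits $(h,h)$-evasiveness in $\Fm^{k-1}$-subspaces, and the $h$-scattered bound (Corollary \ref{cor:hscattered_bound}) applied to the span $\langle \mU \cap H\rangle_{\Fm} \subseteq H$ gives $\dim_{\Fq}(\mU \cap H) \le \frac{(k-1)m}{h+1} \cdot \frac{\dim\langle\mU\cap H\rangle}{k-1}$, forcing $\dim_{\Fq}(\mU \cap H) \le \frac{km}{h+1} - m + h$, which is exactly the desired lower bound on the complement.
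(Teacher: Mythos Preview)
Your overall strategy is exactly the paper's: invoke Theorem \ref{thm:hscattered_MRD} to conclude that $\C$ is MRD, then use the codeword--hyperplane correspondence of Theorem \ref{thm:correspondence_codes_systems} (equation \eqref{eq:rank_weight}) to translate the range of rank weights of nonzero codewords into the range of intersection dimensions. You also correctly identify that the upper bound $w \le m$ (giving $\dim_{\Fq}(\mU\cap H) \ge n - m$) is automatic.

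The gap is in computing $\dd_{\rk}(\C)$. You repeatedly write $\dd_{\rk}(\C) = n - k + 1$, which is the wrong branch of \eqref{eq:singleton} for these parameters. Since $h < k$ is built into the definition of an $(h,r)$-evasive system, we have $n = \frac{km}{h+1} \ge m$, so the active branch of \eqref{eq:singleton} is $mk \le n(m-d+1)$, giving
\[
d \;\le\; m - \frac{km}{n} + 1 \;=\; m - (h+1) + 1 \;=\; m - h.
\]
Because $\C$ is MRD, equality holds: $\dd_{\rk}(\C) = m - h$. That single computation is the whole content of the lower weight bound, and the paper's proof is literally just this plus the correspondence. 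Your confusion stems from mixing the two branches (note $n-k+1 = m-h$ only when $k = h+1$, i.e.\ $n=m$).

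Consequently, your final paragraph --- applying Corollary \ref{cor:hscattered_bound} to $\mU \cap H$ inside $H$ --- is unnecessary, and as written it is not rigorous: $\mU \cap H$ need not span $H$ over $\Fm$, so it is not automatically a $[\ast, k-1]_{q^m/q}$ system, and the inequality you display with the factor $\frac{\dim\langle\mU\cap H\rangle}{k-1}$ is not justified. Simply delete that detour and compute the minimum distance as above.
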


\begin{proof}
 By Theorem \ref{thm:hscattered_MRD}, ${\mU}$ is maximum $h$-scattered if and only if any $[\frac{km}{h+1},k]_{q^m/q}$ code $\C\in\Psi([\mU])$ is MRD. Now, every codeword of $\C$ has rank at most $m$, and at least its minimum distance, which by \eqref{eq:singleton} is $m-h$. Now, by the correspondence of Theorem \ref{thm:correspondence_codes_systems} (see also \eqref{eq:rank_weight}), this means that every $\Fm$-hyperplane $H \subseteq \Fm^k$ is such that 
 $$ \frac{km}{h+1}-m \leq \dim_{\Fq}(\mU\cap H) \leq \frac{km}{h+1}-m+h.$$
\end{proof}

\begin{remark}\label{rem:maxhscattered_evasive}
From the result of Corollary \ref{cor:maxscatterd_hyperplane_intersection}, we can immediately deduce that any  maximum $h$-scattered  $[\frac{km}{h+1},k]_{q^m/q}$ system is also $(k-1,\frac{km}{h+1}-m+h)$-evasive.
\end{remark}

\section{Near MRD codes}\label{sec:nearMRD}

As for codes in the Hamming metric, we can study codes in the rank metric which are as close as possible to be $1$-MRD. This could result in a gain on the length of a code, without loosing too much on the distance properties. We have already seen in Corollary \ref{cor:sMRD_exist}  that  $(k-1)$-MRD codes can be much longer than $1$-MRD codes.
Here, we introduce the notion of near MRD codes, in analogy with their counterparts in the Hamming metric given by near MDS codes. These codes have already been studied in \cite{de2018dually} under the name of $2$-AMRD codes. However, in order to be more consistent with the literature in Hamming-metric coding theory, we prefer to call them \emph{near MRD codes}. 

\subsection{Geometric characterization and bounds}

Here we give a definition of near MRD codes in analogous way to what is done for near MDS codes in the Hamming metric. With the aid of Theorem \ref{thm:evasive_generalizedweights}, we then analyze geometrically what these codes correspond to. This geometric point of view will help us in deriving very interesting results on the parameters of near MRD codes. 

\begin{definition}
 An $\Fmk$ code $\C$ is called \textbf{near MRD} if  $\dd_{\rk}(\C)=n-k$ and $\dd_{\rk,s}(\C)=n-k+s$ for every $2\leq s\leq k$.
\end{definition}

Observe that, due to the monotonicity of the generalized rank weights (Proposition  \ref{prop:gen_weights_properties}(2)) and their bounds  (Proposition \ref{prop:bounds_genweights}), it is enough to consider just the first two generalized weights. In other words, an $\Fmk$ code $\C$ is near MRD if and only if $\dd_{\rk}(\C)=n-k$ and $\dd_{\rk,2}(\C)=n-k+2$.

\begin{example}
Let $q=2$ and $m=3$, and consider $\F_{8}=\F_2(\alpha)$, where $\alpha^3+\alpha+1=0$. Consider the $[4,2]_{2^3/2}$ code $\C$ studied in Example  \ref{exa:detailed}, whose generator matrix is
$$ G=\begin{pmatrix} 1 & 0 & \alpha & 0 \\ 0 & 1 & 0 & \alpha^2
\end{pmatrix}.$$
We have seen before that $\dd_{\rk}(\C)=2$ and  $\dd_{\rk,2}(\C)=4$. Therefore, $\C$ is a near MRD code.  
\end{example}

We now give a geometric characterization for near MRD codes. 

\begin{proposition}\label{prop:nearMRD}
 Let $\C$ be a nondegenerate $\Fmk$  code and let $\mU\in\Phi([\C])$. The following are equivalent.
 \begin{enumerate}
     \item $\C$ is near MRD.
     \item $\dd_{\rk}(\C)+\dd_{\rk}(\C^\perp)=n$, that is, $\C$ is \textit{dually almost MRD}.\footnote{The notion of dually almost MRD codes was introduced and studied in \cite{de2018dually}.}
     \item \begin{enumerate}
         \item $\mU$ is $(k-2)$-scattered,
         \item $\mU$ is not $(k-1)$-scattered, and
         \item $\mU$ is $(k-1,k)$-evasive.
     \end{enumerate}
 \end{enumerate}
\end{proposition}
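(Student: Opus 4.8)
The plan is to establish the equivalences by translating each condition through Theorem~\ref{thm:evasive_generalizedweights}, which relates evasive properties of $\mU$ to generalized rank weights of $\C$ and $\C^\perp$. Since $\C$ is near MRD if and only if $\dd_{\rk}(\C)=n-k$ and $\dd_{\rk,2}(\C)=n-k+2$ (by the monotonicity in Proposition~\ref{prop:gen_weights_properties}(1) and the bounds in Proposition~\ref{prop:bounds_genweights}, as noted after the definition), I would take this two-weight reformulation as the working characterization of (1) throughout.

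\medskip

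First I would prove (1)$\iff$(3). Apply Theorem~\ref{thm:evasive_generalizedweights} with the appropriate parameters: $\mU$ is $(k-2)$-scattered means $(h,r)=(k-2,k-2)$, which is equivalent to $\dd_{\rk,k-h}(\C)=\dd_{\rk,2}(\C)\geq n-r=n-k+2$; combined with the Singleton-type bound $\dd_{\rk,2}(\C)\leq n-k+2$ this forces equality, i.e.\ (3a) $\iff$ $\dd_{\rk,2}(\C)=n-k+2$. Similarly, $\mU$ being $(k-1)$-scattered corresponds via the theorem to $\dd_{\rk,1}(\C)\geq n-(k-1)=n-k+1$, so (3b), the failure of $(k-1)$-scatteredness, is equivalent to $\dd_{\rk}(\C)\leq n-k$. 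For (3c), $(k-1,k)$-evasive corresponds to $\dd_{\rk,1}(\C)\geq n-k$. Putting these together: (3a)$+$(3b)$+$(3c) says $\dd_{\rk,2}(\C)=n-k+2$ and $n-k\leq\dd_{\rk}(\C)\leq n-k$, i.e.\ $\dd_{\rk}(\C)=n-k$ and $\dd_{\rk,2}(\C)=n-k+2$, which is exactly (1). I should double-check the parameter bookkeeping in the $(h,r)$ substitutions, since getting the indices $k-h$, $r-h+1$ right is where an off-by-one slip would hide --- that is the main place to be careful.

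\medskip

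Next I would prove (1)$\iff$(2). Use the Wei-type duality of Proposition~\ref{prop:gen_weights_properties}(2): the set $\{\dd_{\rk,i}(\C)\}\cup\{n+1-\dd_{\rk,j}(\C^\perp)\}$ partitions $\{1,\ldots,n\}$. If $\C$ is near MRD, its generalized weights are exactly $\{n-k, n-k+2, n-k+3,\ldots,n\}$, which misses only $n-k+1$ from $\{n-k,\ldots,n\}$ and misses $\{1,\ldots,n-k-1\}$ entirely; hence the complementary values $n+1-\dd_{\rk,j}(\C^\perp)$ must be exactly $\{1,\ldots,n-k-1\}\cup\{n-k+1\}$, forcing $\dd_{\rk,1}(\C^\perp)=n+1-(n-k+1)=k$... let me instead read off $\dd_{\rk}(\C^\perp)$: the largest value taken by $n+1-\dd_{\rk,j}(\C^\perp)$ is $n-k+1$, corresponding to the smallest $\dd_{\rk,j}(\C^\perp)$, so $\dd_{\rk}(\C^\perp)=\dd_{\rk,1}(\C^\perp)=n+1-(n-k+1)+\text{(correction)}$; I would carry out this short computation carefully to conclude $\dd_{\rk}(\C^\perp)=k$, hence $\dd_{\rk}(\C)+\dd_{\rk}(\C^\perp)=(n-k)+k=n$, giving (2). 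Conversely, if $\dd_{\rk}(\C)+\dd_{\rk}(\C^\perp)=n$, I would combine the Singleton bound $\dd_{\rk}(\C)\leq n-k+1$ with Wei duality: $\dd_{\rk}(\C^\perp)=n-\dd_{\rk}(\C)\geq k-1$, and since $n+1-\dd_{\rk,1}(\C^\perp)$ is a value in the $\C$-part's complement, one deduces $n-k+1\notin\{\dd_{\rk,i}(\C)\}$, which together with monotonicity and the bounds pins down $\dd_{\rk}(\C)=n-k$ and $\dd_{\rk,2}(\C)=n-k+2$.

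\medskip

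The main obstacle I anticipate is not conceptual but combinatorial: keeping the index substitutions in Theorem~\ref{thm:evasive_generalizedweights} and the set-complement arguments in the Wei duality perfectly consistent, since every equivalence here rests on an exact matching of which integers appear as generalized weights. Once the dictionary is set up cleanly, each implication is one or two lines. I would therefore write the proof by first recording the reformulation of (1), then handling (1)$\iff$(3) via three invocations of the theorem, and finally (1)$\iff$(2) via one application of Wei duality plus the Singleton bound.
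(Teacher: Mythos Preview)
Your approach is correct and is exactly the paper's: (1)$\iff$(3) comes from three direct applications of Theorem~\ref{thm:evasive_generalizedweights}, and (1)$\iff$(2) from Wei-type duality and the Singleton bound (the paper just cites Proposition~\ref{prop:gen_weights_properties} and \cite{de2018dually}). The only spot to tighten is your $(2)\Rightarrow(1)$ sketch: to pin down $\dd_{\rk}(\C)=n-k$ exactly, apply the same argument symmetrically to $\C^\perp$ to get $\dd_{\rk}(\C^\perp)\leq k$, which together with $\dd_{\rk}(\C)\leq n-k$ and the hypothesis forces both equalities.
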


\begin{proof}
 The equivalence between (1) and (2) derives immediately from Proposition \ref{prop:gen_weights_properties}; see also \cite[Proposition 3.4]{de2018dually}. The equivalence between (1) and (3) follows instead from Theorem \ref{thm:evasive_generalizedweights}.
\end{proof}

In the case one aims to construct codes of length $n\leq m$, then it is well-known that one can easily construct MRD codes. Hence, it is certainly of more significance to construct longer codes, whose parameters are not excluded by Proposition \ref{prop:BoundNearMRD}. In such a case, one can just check only two out of the three conditions on the associated $\Fmk$ system $\mU\in\Phi([\C])$ of Proposition \ref{prop:nearMRD}. Indeed, by \eqref{eq:singleton}, an $[m+i,k]_{q^m/q}$ system with $i \ge 1$ cannot be $(k-1)$-scattered, and hence one only needs to verify the remaining two conditions. Using these conditions, we get the following result.

\begin{proposition}\label{prop:BoundNearMRD}
Let $\C$ be a nondegenerate $\Fmk$ near MRD code. We have
$$n\leq \min \bigg\{\frac{km}{k-1}, m+t \bigg\},$$
where $t=\max\{ i \St i^2+(m-k-1)i-m \leq 0\}$. 
In other words,
$$n \leq \min \bigg\{\frac{km}{k-1},m+\frac{k-m+1+\sqrt{m^2-2(k-1)m+(k+1)^2}}{2}\bigg\}.$$
\end{proposition}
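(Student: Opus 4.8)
The strategy is to translate the near MRD property of $\C$ into two constraints, each coming from an instance of the Singleton-type bound, and then optimize. By Proposition \ref{prop:nearMRD} (equivalence of (1) and (3)), if $\C$ is near MRD then its associated $q$-system $\mU$ is both $(k-2)$-scattered and $(k-1,k)$-evasive. Each of these geometric properties, fed into Corollary \ref{cor:hscattered_bound} respectively Corollary \ref{cor:k-1evasive}, will yield one of the two terms in the minimum.

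\textbf{First bound.} Applying Corollary \ref{cor:hscattered_bound} with $h=k-2$: since $\mU$ is $(k-2)$-scattered and $(k-2)+1=k-1$, we immediately get
$$ n \leq \frac{km}{k-1}. $$
This gives the first entry of the minimum with essentially no work.

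\textbf{Second bound.} For the second entry, apply Corollary \ref{cor:k-1evasive} with $r=k$: since $\mU$ is $(k-1,k)$-evasive, we obtain
$$ km \leq n(m-n+k+1). $$
Rearranging, this reads $n^2 - (m-k-1)n - km + km \le \ldots$; more carefully, set $n = m+i$ (the interesting regime is $n \ge m$, the case $n \le m$ being covered trivially by $n \le m \le \frac{km}{k-1}$ for $k \ge 2$, or one handles it directly). Substituting $n=m+i$ into $km \le n(m-n+k+1)=(m+i)(k+1-i)$ and expanding gives, after simplification, the quadratic inequality $i^2 + (m-k-1)i - m \le 0$ in the integer $i$. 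Hence $i \le t := \max\{i : i^2+(m-k-1)i - m \le 0\}$, i.e. $n \le m+t$. Solving the quadratic $i^2+(m-k-1)i-m=0$ by the quadratic formula yields the positive root $\frac{k-m+1+\sqrt{(m-k-1)^2+4m}}{2} = \frac{k-m+1+\sqrt{m^2-2(k-1)m+(k+1)^2}}{2}$ (expanding $(m-k-1)^2+4m = m^2 - 2(k-1)m + (k+1)^2$ confirms the radicand), and $t$ is the floor of this root; dropping the floor gives the weaker but cleaner closed-form bound stated in the proposition.

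\textbf{Main obstacle.} The only genuinely delicate point is verifying that both geometric conditions from Proposition \ref{prop:nearMRD}(3) are actually available and correctly matched to the two corollaries — in particular checking that the hypotheses of Corollary \ref{cor:k-1evasive} (namely $\mU$ being $(k-1,r)$-evasive with the right $r=k$) are met, and handling the edge case $n \le m$ so that the claimed minimum is valid in all regimes. The algebra reducing $(m+i)(k+1-i) \ge km$ to $i^2+(m-k-1)i-m \le 0$ and then identifying the radicand is routine; once done, taking the minimum of the two derived bounds $n \le \frac{km}{k-1}$ and $n \le m+t$ completes the proof.
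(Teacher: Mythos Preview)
Your proof is correct and follows exactly the paper's approach: the paper's proof consists of the single sentence ``Use Corollary \ref{cor:hscattered_bound} and Corollary \ref{cor:k-1evasive},'' and you have correctly unpacked this by invoking Proposition \ref{prop:nearMRD}(3) to obtain the $(k-2)$-scattered and $(k-1,k)$-evasive properties, then applying the two corollaries and carrying out the algebra. The substitution $n=m+i$ and the reduction to the quadratic are routine and match the statement; your parenthetical about the case $n\le m$ is unnecessary (the quadratic argument already covers all integers $i$, since the product of the roots is $-m<0$) but not wrong.
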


\begin{proof}
 Use Corollary \ref{cor:hscattered_bound} and Corollary \ref{cor:k-1evasive}.
\end{proof}

We are now ready to show the main result of this section, which provides clean bounds on the parameters $n,m,k$  of a $[n,k,n-k]_{q^m/q}$ near MRD code, and geometrically characterize the longest near MRD codes. 

\begin{theorem}\label{thm:nearMRD_characterization}
 Let $\C$ be an $\Fmk$ near MRD code. Then $m \geq k$ and 
 $$ n \leq \begin{cases} m+2 & \mbox{ if } m=2k-2, \\
 m+1 & \mbox{ if }  m \neq 2k-2.
 \end{cases}$$
 Moreover, $\C$ is a $[2k,k,k]_{q^{2k-2}/q}$ near MRD code if and only if any $\mU \in \Phi([\C])$ is a maximum $(k-2)$-scattered subspace. 
\end{theorem}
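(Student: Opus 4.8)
The strategy is to combine the two bounds recorded in Proposition~\ref{prop:BoundNearMRD} and analyze when they can be satisfied simultaneously. First I would establish $m\geq k$: by Proposition~\ref{prop:nearMRD}, a near MRD code corresponds to a $(k-2)$-scattered $\Fmk$ system $\mU$, so Corollary~\ref{cor:existence_hscattered} applied with $h=k-2$ forces $m\geq h+2=k$. Next I would extract the length bound. We know $n\leq m+t$ with $t=\max\{i : i^2+(m-k-1)i-m\leq 0\}$. The plan is to show that the polynomial $p(i)=i^2+(m-k-1)i-m$ satisfies $p(1)=-k<0$ always (so $t\geq 1$, i.e.\ $n\leq m+1$ is at least conceivable), while $p(3)=9+3(m-k-1)-m=2m-3k+6$, and $p(2)=4+2(m-k-1)-m=m-2k+2$. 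So $t\geq 3$ requires $2m-3k+6\leq 0$, i.e.\ $m\leq \tfrac{3k-6}{2}$; but combined with the other bound $n\leq \frac{km}{k-1}$ one checks this regime is essentially incompatible with $n\geq m+3$ — I would carry out the short inequality chase $m+3\leq \frac{km}{k-1}$ versus $m\leq\frac{3k-6}{2}$ to rule it out (for $k\geq 3$; the small cases $k\leq 2$ are trivial since then $n\leq k\leq m$ and there is nothing to prove). Hence $t\leq 2$, giving $n\leq m+2$, and $n=m+2$ forces $t=2$, i.e.\ $p(2)\leq 0$, i.e.\ $m\leq 2k-2$.

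Then I would pin down the boundary case. When $n=m+2$, the bound $n\leq\frac{km}{k-1}$ reads $m+2\leq\frac{km}{k-1}$, equivalently $(m+2)(k-1)\leq km$, equivalently $mk-m+2k-2\leq km$, equivalently $2k-2\leq m$. Together with $m\leq 2k-2$ from the previous paragraph this forces $m=2k-2$, which is exactly the claimed dichotomy: $n\leq m+2$ if $m=2k-2$ and $n\leq m+1$ otherwise.

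For the final equivalence, suppose $\C$ is a $[2k,k,k]_{q^{2k-2}/q}$ near MRD code (so $n=2k$, $m=2k-2$, and indeed $n-k=k=d$, consistent with $m=2k-2$). Let $\mU\in\Phi([\C])$. By Proposition~\ref{prop:nearMRD}, $\mU$ is $(k-2)$-scattered, and the $h$-Scattered Bound (Corollary~\ref{cor:hscattered_bound}) with $h=k-2$ gives $n\leq\frac{km}{h+1}=\frac{k(2k-2)}{k-1}=2k$; since $n=2k$ equality holds, so $\mU$ is maximum $(k-2)$-scattered. Conversely, if $\mU\in\Phi([\C])$ is maximum $(k-2)$-scattered, then $n=\frac{km}{k-1}$, so $m=\frac{(k-1)n}{k}$, forcing $k\mid n$; writing $n=ka$ gives $m=(k-1)a$ and $n-m=a$. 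By Theorem~\ref{thm:hscattered_MRD}, maximum $(k-2)$-scattered is equivalent to $\mC$ being MRD, with minimum distance (from \eqref{eq:singleton}) equal to $h+2=k$; but an MRD code is near MRD only if it fails to be $1$-MRD in every higher generalized weight in the precise way demanded, and here the relevant check is that a near MRD code has $d=n-k$, forcing $a=n-m=n-k$... wait, $n-k=ka-k=k(a-1)$ while $d=k$, so $a=2$, hence $n=2k$ and $m=2k-2$. The cleanest route, which I would use instead: a maximum $(k-2)$-scattered system has an associated MRD code whose minimum distance is $k$; since every near MRD code needs $d=n-k$, and an MRD code of length $n=\frac{km}{k-1}$ has $d=k$ by the above, consistency gives $n-k=k$, i.e.\ $n=2k$, hence $m=\frac{k(2k-2)}{k}... $ no: $m=\frac{(k-1)n}{k}=\frac{(k-1)2k}{k}=2k-2$. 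I would present this last direction carefully, the main subtlety being to argue that a maximum $(k-2)$-scattered system automatically yields a code that is genuinely near MRD (not merely satisfying the upper generalized-weight equalities but also $d=n-k$) — and this is forced precisely because $m=2k-2$ makes the three bounds in Proposition~\ref{prop:bounds_genweights} collapse so that the MRD code is automatically $s$-MRD for all $s\geq 2$ while having $d=n-k$.

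\textbf{Main obstacle.} The delicate point is the converse direction of the final equivalence: one must verify that "maximum $(k-2)$-scattered $\Rightarrow$ the associated code is near MRD with minimum distance exactly $n-k$", which requires checking that the parameters are forced to be $(2k, k, 2k-2)$ and that in this regime MRD and near MRD coincide. I expect the cleanest argument uses Theorem~\ref{thm:hscattered_MRD} to get MRD, computes $d=k$ from the Singleton bound, and then deduces $n=2k$, $m=2k-2$ from $d=n-k$; the only thing to double-check is that such an MRD code indeed satisfies $\dd_{\rk,s}=n-k+s$ for all $s$, which follows from Proposition~\ref{prop:bounds_genweights} since when $m=2k-2$ and $n=2k$ the three quantities in \eqref{eq:singbound_genweights} are all at least $n-k+s$ for $s\geq 1$, combined with monotonicity forcing equality once $d=n-k$.
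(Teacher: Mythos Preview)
Your argument for the inequalities $m \geq k$ and $n \leq m+2$ (with equality only when $m = 2k-2$) is correct and essentially the paper's, just organized contrapositively via the values $p(1),p(2),p(3)$ rather than as a direct case split on the sign of $m-(2k-2)$. One small slip: your parenthetical that the cases $k\leq 2$ are trivial ``since then $n\leq k$'' is false --- the $[4,2,2]_{8/2}$ code of Example~\ref{exa:detailed} is near MRD with $n=4>2=k$ --- but your main inequality chase in fact covers $k=2$ without modification, so this does no damage.

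The genuine gap is in the converse direction of the biconditional. First, a computational error: you claim that the MRD code associated to a maximum $(k-2)$-scattered system has ``minimum distance $h+2=k$''. That is the minimum distance of $\C^\perp$ (see the proof of Theorem~\ref{thm:hscattered_MRD}); for $\C$ itself one gets $d(\C)=m-h=m-k+2$ from \eqref{eq:singleton} with $n>m$, and this equals $k$ only once $m=2k-2$ is already known. Second, and more structurally, you then invoke the near MRD identity $d=n-k$ to force the parameters $n=2k$, $m=2k-2$ --- but near MRD is precisely the conclusion you are trying to establish in the converse, so this is circular. The biconditional should be read with the parameters $n=2k$, $m=2k-2$ as given on both sides; the task is to deduce that $\C$ is near MRD from the maximality of $\mU$.

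The paper's route for this is short and avoids your detour through Theorem~\ref{thm:hscattered_MRD} entirely. It simply verifies the three geometric conditions of Proposition~\ref{prop:nearMRD}(3): condition (a) is the hypothesis; condition (b) follows since $n=2k>2k-2=m$ rules out $(k-1)$-scatteredness by \eqref{eq:singleton}; and the crucial condition (c), $(k-1,k)$-evasiveness, comes directly from Corollary~\ref{cor:maxscatterd_hyperplane_intersection} (Remark~\ref{rem:maxhscattered_evasive}), which says that any maximum $(k-2)$-scattered system is $(k-1,\,\tfrac{km}{k-1}-m+(k-2))$-evasive, and with $m=2k-2$ this is $(k-1,\,2k-(2k-2)+(k-2))=(k-1,k)$. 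No computation of $d(\C)$ and no appeal to MRD is needed.
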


\begin{proof}
  First, using Corollary \ref{cor:existence_hscattered} there is no $(k-2)$-scattered $\Fmk$ system if $m<k$. Thus, by Proposition \ref{prop:nearMRD}, we must have $m \geq k$. 
  
  Concerning the upper bound on $n$, we divide the proof in three cases:
  
  \noindent When $m=2k-2$, then, using Proposition \ref{prop:BoundNearMRD}, we obtain $n \leq 2k=m+2$.
  
  \noindent   Assume now that $m>2k-2$. Then one can easily  verify that 
   $\max\{i \St i^2+(m-k-1)i-m \leq 0\}=1$. 
  Also in this case, using Proposition \ref{prop:BoundNearMRD}, we deduce $n \leq m+1$.
  
  \noindent   If we suppose instead that $m<2k-2$, we may write $m=2k-2-\epsilon$ for some $\epsilon\ge 1$, and obtain that $\frac{km}{k-1}=2k-\epsilon\frac{k}{k-1}< 2k-\epsilon=m+2$. Thus, $n \le m+1$ and we conclude using again Proposition \ref{prop:BoundNearMRD}.
  
  For the last part of the statement, if $\C$ is a $[2k,k,k]_{q^{2k-2}/q}$ near MRD code then by Proposition \ref{prop:nearMRD} any $\mU\in \Phi([\C])$ is maximum $(k-2)$-scattered. Viceversa, if $\mU\in \Phi([\C])$ is a maximum $(k-2)$-scattered $[2k,k,k]_{q^{2k-2}/q}$ system, then by Corollary \ref{cor:maxscatterd_hyperplane_intersection} (see also Remark \ref{rem:maxhscattered_evasive}), we have that $\mU$ is also $(k-1,k)$-evasive. Moreover, by \eqref{eq:singleton}, $\mU$ cannot be $(k-1)$-scattered, and using Proposition \ref{prop:nearMRD} we derive that $\C$ is a $[2k,k,k]_{q^{2k-2}/q}$ near MRD code.
\end{proof}

We now give a first nontrivial construction/example of near MRD codes, that is, near MRD codes  whose length is larger than $m$. 

\begin{proposition}\label{prop:nearMRD_firstConstruction} Assume that $m \geq k$. Then, 
the set $$\mU\coloneqq\{(\alpha+\lambda, \alpha^q, \ldots,\alpha^{q^{k-1}})\St \alpha \in \Fm, \lambda \in \Fq\}$$ is an $[m+1,k]_{q^m/q}$ system, which is $(k-2)$-scattered and $(k-1,k)$-evasive.
\end{proposition}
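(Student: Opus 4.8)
The plan is to verify the three claimed properties of $\mU$ directly by exploiting the connection between intersection dimensions and the ranks of $q$-linearized polynomials, or equivalently by translating everything back to the associated code $\C$ via Theorem \ref{thm:evasive_generalizedweights}. First I would check that $\mU$ is genuinely an $[m+1,k]_{q^m/q}$ system: the map $(\alpha,\lambda)\mapsto(\alpha+\lambda,\alpha^q,\ldots,\alpha^{q^{k-1}})$ is $\Fq$-linear with trivial kernel (if all coordinates vanish then $\alpha=0$ hence $\lambda=0$), so $\dim_{\Fq}\mU=m+1$; and since the second coordinate alone, $\alpha\mapsto\alpha^q$, already surjects onto $\Fm$ while the first coordinate can be adjusted by $\lambda$, the $\Fm$-span of $\mU$ is all of $\Fm^k$. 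So $\mU$ is a $q$-system, and by the Singleton bound it cannot be $(k-1)$-scattered, giving condition (b) of Proposition \ref{prop:nearMRD}(3) for free.

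The core of the argument is showing $\mU$ is $(k-2)$-scattered and $(k-1,k)$-evasive, i.e.\ that every $(k-2)$-dimensional $\Fm$-subspace $W$ of $\Fm^k$ meets $\mU$ in $\Fq$-dimension at most $k-2$, and every $(k-1)$-dimensional $W$ meets $\mU$ in $\Fq$-dimension at most $k$. An element $(\alpha+\lambda,\alpha^q,\ldots,\alpha^{q^{k-1}})$ lies in $W$ precisely when $\alpha$ satisfies the $\Fm$-linear conditions defining $W$; substituting, each linear equation $\sum_{i}c_i x_i=0$ becomes a $q$-polynomial equation in $\alpha$ of the form $c_0\lambda + \sum_{i=0}^{k-1}c_i\alpha^{q^i}=0$. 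For a $(k-1)$-codimensional $W$ (the $(k-2)$-dimensional case) we get $k-1$ such equations; eliminating $\lambda$ if some $c_0\neq 0$, and otherwise working directly, one is left with a single nonzero $q$-polynomial in $\alpha$ of $q$-degree at most $k-1$, whose $\Fq$-space of roots has dimension at most $k-2$ (a nonzero linearized polynomial of $q$-degree $\le k-1$ has at most $q^{k-1}$ roots); together with the one extra dimension possibly contributed by $\lambda$ being free, a careful bookkeeping gives $\dim_{\Fq}(\mU\cap W)\le k-2$. The same computation with one fewer equation (a $(k-1)$-dimensional $W$, two codimensions) leaves a nonzero $q$-polynomial of degree $\le k-1$, root space of dimension $\le k-1$, plus possibly $\lambda$ free, for a total of $\le k$, which is the $(k-1,k)$-evasive bound.

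A cleaner route, which I would actually present, is to pass to the code. Take the generator matrix $G$ whose columns are an $\Fq$-basis of $\mU$; concretely $\mU$ contains the "Gabidulin-type" subspace $\{(\alpha,\alpha^q,\ldots,\alpha^{q^{k-1}}):\alpha\in\Fm\}$ plus the vector $e_1=(1,0,\ldots,0)$. The code $\C\in\Psi([\mU])$ is then (equivalent to) the Gabidulin code of length $m$ and dimension $k$ with one coordinate "doubled". Its dual $\C^\perp$ has length $m+1$ and dimension $m+1-k$, and one computes $\dd_{\rk}(\C^\perp)$: the near-MRD conditions via Proposition \ref{prop:nearMRD}(3) and Theorem \ref{thm:evasive_generalizedweights} are equivalent to $\dd_{\rk,1}(\C^\perp)\ge k$ and $\dd_{\rk,2}(\C^\perp)\ge k+1$ (these are the translations of "$(k-2)$-scattered" and "$(k-1,k)$-evasive"), while non-$(k-1)$-scatteredness is automatic for length $m+1>m$. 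So it suffices to estimate the first two generalized rank weights of this explicit $\C^\perp$, which can be done by hand since $\C$ is so close to Gabidulin; the minimum distance of $\C$ itself is $n-k=m+1-k$ (a Gabidulin codeword of rank $<m-k+1$ forced to agree in two coordinates is forced to be zero, and the doubled coordinate costs exactly one), and $\dd_{\rk,2}(\C)=m+3-k$ follows similarly, then Wei-duality yields the needed inequalities for $\C^\perp$.

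The main obstacle is the careful case analysis in the elimination step: one must handle separately the situation where the defining equations of $W$ do involve the first coordinate (so $\lambda$ is constrained) versus where they do not (so $\lambda$ ranges freely over $\Fq$, contributing an extra $+1$ to the intersection dimension), and make sure the resulting linearized polynomial in $\alpha$ is genuinely nonzero of the claimed degree — this nonvanishing is where the specific shape $\alpha\mapsto(\alpha,\alpha^q,\ldots,\alpha^{q^{k-1}})$ (linear independence of $x,x^q,\ldots,x^{q^{k-1}}$ over $\Fm$, valid since $m\ge k$) is used, and it is exactly the place where the hypothesis $m\ge k$ enters. Everything else is routine linear algebra over $\Fq$ and $\Fm$.
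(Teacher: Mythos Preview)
Your linearized-polynomial approach can be made to work, but the sketch as written has two genuine problems.

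First, the codimensions are swapped throughout. A $(k-2)$-dimensional $\Fm$-subspace of $\Fm^k$ has codimension $2$, so it is cut out by \emph{two} linear equations, not $k-1$; a $(k-1)$-dimensional subspace (a hyperplane) gives \emph{one} equation. Your sentence ``the same computation with one fewer equation (a $(k-1)$-dimensional $W$, two codimensions)'' has this exactly backwards.

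Second, and more seriously, the crux of the $(k-2)$-scattered argument is your claim that the eliminated polynomial has ``root space of dimension at most $k-2$'', but your own parenthetical justification (``at most $q^{k-1}$ roots'') only yields $\le k-1$. The missing observation is that, after eliminating $\lambda$ from the two defining equations of $W$, the resulting $q$-polynomial in $\alpha$ has \emph{no linear term}: it is of the form $\sum_{i\ge 1} c_i \alpha^{q^i} = h(\alpha^q)$ with $h$ nonzero of $q$-degree $\le k-2$, and it is the root bound on $h$, pulled back through Frobenius, that gives the needed $\le k-2$. Without this step the argument stalls at $\le k-1$. (In the subcase where both defining equations already have vanishing first coefficient, $\lambda$ is unconstrained and contributes $+1$, so you must additionally eliminate between the two $\alpha$-equations to drop the degree once more before invoking the bound.) Your ``cleaner route'' via the code also leaves the hard part undone: the assertion ``$\dd_{\rk,2}(\C)=m+3-k$ follows similarly'' is precisely the $(k-2)$-scatteredness in disguise, and the translation to $\C^\perp$ should read $\dd_{\rk,2}(\C^\perp)\ge k+2$, not $k+1$.

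The paper's proof is organized differently and avoids this bookkeeping. For $(k-1,k)$-evasiveness it simply observes that $\mU$ contains the $(k-1)$-scattered pseudoregulus system $\{(\alpha,\alpha^q,\ldots,\alpha^{q^{k-1}}):\alpha\in\Fm\}$ of $\Fq$-codimension $1$ in $\mU$, so any $\Fm$-hyperplane meets $\mU$ in $\Fq$-dimension at most $(k-1)+1=k$. For $(k-2)$-scatteredness it argues by contradiction with a putative $(k-1)$-dimensional $\Fq$-subspace $\mV\subseteq\mU$ whose $\Fm$-span has dimension at most $k-2$: after reducing a basis of $\mV$ to the vector $(\lambda_1,0,\ldots,0)$ together with $k-2$ pseudoregulus vectors on $\Fq$-independent $\alpha_2,\ldots,\alpha_{k-1}$, a single Moore-determinant computation shows these $k-1$ vectors are already $\Fm$-independent, a contradiction.
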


\begin{proof}
Suppose on the contrary that there exists a $(k-1)$-dimensional $\Fq$-subspace $\mV$ of $\mU$ whose $\Fm$-span has $\Fm$-dimension at most $k-2$.  An $\Fq$-basis of $\mV$ can be taken to be of the form $$\{(\alpha_1+\lambda_1,\alpha_1^q,\ldots,\alpha_1^{q^{k-1}})\}\cup\{(\alpha_i,\alpha_i^q,\ldots,\alpha_i^{q^{k-1}}) \St 2\le i\le k-1)\}, $$
for some $\alpha_1,\ldots \alpha_{k-1}\in \Fm$,$\lambda_1\in\Fq$. First, observe that $\alpha_2,\ldots,\alpha_{k-1}$ must be $\Fq$-linearly independent, otherwise the last $k-2$ vectors would be $\Fq$-linearly dependent and hence the set above would not be an $\Fq$-basis of $\mV$. If in addition the $\alpha_i$'s are all $\Fq$-linearly independent, then the $\Fm$-span of $\mV$ has dimension $k-1$. Thus, we must have $\alpha_1\in\langle \alpha_2,\ldots,\alpha_{k-1}\rangle_{\Fq}$. This means that we can get another basis of $\mV$ that is given by 
$$\{(\lambda_1,0,\ldots,0)\}\cup\{(\alpha_i,\alpha_i^q,\ldots,\alpha_i^{q^{k-1}}) \St 2\le i\le k-1)\}, $$
implying that $\lambda_1\neq 0$. Putting these vectors as columns of a $k\times (k-1)$ matrix, we have that the $(k-1)\times(k-1)$ minor obtained deleting the last row is $\lambda_1\gamma^q$, where $\gamma$ is the determinant of the $(k-2)\times (k-2)$ Moore matrix on the elements $\alpha_2,\ldots,\alpha_{k-1}$, that are $\Fq$-linearly independent.
Therefore, these vectors are also $\Fm$-linearly independent, getting a contradiction. Thus, $\mU$ is $(k-2)$-scattered.

Moreover, observe that $\mU$ contains the $[m,k]_{q^m/q}$ system $\{(\alpha, \alpha^q, \ldots,\alpha^{q^{k-1}})\St \alpha \in \Fm\}$, which is of pseudoregulus type and hence $(k-1)$-scattered. This implies that $\mU$ is $(k-1,k)$-evasive. 
\end{proof}

\begin{remark} Proposition \ref{prop:nearMRD_firstConstruction} automatically proves that any code  $\C\in\Psi([\mU])$ is an $[m+1,k]_{q^m/q}$ near MRD code; see Proposition \ref{prop:nearMRD}. Moreover, by Theorem \ref{thm:nearMRD_characterization}, this is the longest possible near MRD code for given $k,m$ (and $q$), whenever $m\neq 2k-2$.
\end{remark}

We show with a concrete example a near MRD code obtained from Proposition \ref{prop:nearMRD_firstConstruction}.

\begin{example}
 Let $q=2, m=4$ and $k=3$. Consider the finite field $\F_{16}=\F_2(\beta)$, where $\beta^4+\beta+1=0$. Since $\{1,\beta,\beta^2,\beta^3\}$ is an $\F_2$-basis of $\F_{16}$, we have that  
 $$\{(1,0,0),(1,1,1),(\beta,\beta^2,\beta^4),(\beta^2,\beta^4,\beta^8),(\beta^3,\beta^{6},\beta^{12}) \}$$ is an $\F_2$-basis for $\mU$. Thus, a $[5,3]_{2^4/2}$ near MRD code $\C \in \Psi([\mU])$ is the one generated by the matrix 
 $$ G=\begin{pmatrix} 1 & 1 & \beta & \beta^2  & \beta^3 \\
 0 & 1 & \beta^2 & \beta^4 & \beta^6  \\
 0 & 1 & \beta^4 & \beta^8 & \beta^{12}  
 \end{pmatrix}.$$
\end{example}

\subsection{Three-dimensional and four-dimensional near MRD codes}\label{sec:3-4-dim}

We now analyze the special case of three-dimensional near MRD codes. The bound of Proposition \ref{prop:BoundNearMRD} simplifies as follows. Let $\C$ be an $[n,3]_{q^m/q}$ near MRD code. Then

\begin{equation*}
    n \leq \begin{cases} m+1 & \mbox{ if } m \geq 5, \mbox{ or } m =3, \\
    m+2 &  \mbox{ if } m=4.
    \end{cases}
\end{equation*}

Observe that for each $m \geq 3$, we already have an $[m+1,3]_{q^m/q}$ near MRD code, due to Proposition \ref{prop:nearMRD_firstConstruction}. Moreover, 
     any maximum scattered $[6,3,3]_{q^4/q}$ system $\mU$  gives a $[6,3,3]_{q^4/q}$ near MRD code $\C\in \Psi([\mU]))$; see Theorem \ref{thm:nearMRD_characterization}.  The existence of such subspaces has already been proved in \cite{csajbok2017maximum}, which includes a constructive proof; see Theorem \ref{thm:existence_maxScattered}(2). This completes the picture on the longest three-dimensional near MRD codes. 
     
    \begin{remark}
    We want to point out that  $[6,3,3]_{q^4/q}$ near MRD codes have already been encountered in \cite{alfarano2021linear}. Indeed, in that paper, the authors showed that whenever $\mU$ is a scattered $[m+2,3]_{q^m/q}$ system, then any $\C\in \Psi([\mU])$  provides an example of the shortest  $3$-dimensional \emph{minimal} rank-metric codes over $\F_{q^m}$.
    \end{remark}

We now analyze the special case of near MRD codes with dimension $4$. The bound of Proposition \ref{prop:BoundNearMRD} can be rewritten in the following way. Let $\C$ be an $[n,4]_{q^m/q}$ near MRD code. Then
\begin{equation}
    n \leq \begin{cases} m+1 & \mbox{ if } m \geq 7 \mbox{ or } 4\le m \le 5 \\
    m+2 &  \mbox{ if }  m = 6.
    \end{cases}
\end{equation}

Observe that for each $m \geq 4$ we already have an $[m+1,4]_{q^m/q}$ near MRD code, due to Proposition \ref{prop:nearMRD_firstConstruction}. The only case left is if we can construct an $[8,4,4]_{q^6/q}$ near MRD code. Note that, by Theorem \ref{thm:nearMRD_characterization}, this is equivalent to construct a $2$-scattered $[8,4]_{q^6/q}$ system. So far, there is only one known construction of $2$-scattered $[8,4]_{q^6/q}$ systems, which was shown in \cite{BMN2} for every $q=2^h$, with $h\equiv 1 \pmod 6$.

\subsection{Relation between the families of codes}

We conclude the section analyzing the relations between the families of MRD codes, $1$-MRD codes, $2$-MRD codes and near MRD codes. Let us introduce the following notation, defining the sets
$$ \begin{array}{rcl} 
 \A(q)& \coloneqq & \left\{ \C \St \C \mbox{ is an } \Fmk \mbox{ MRD code, for some } n,k,m \in \N \right\}, \\
 \A_{\mathrm{1}}(q)& \coloneqq & \left\{ \C \St \C \mbox{ is an } \Fmk \mbox{ $1$-MRD code, for some } n,k,m \in \N \right\}, \\
 \A_{\mathrm{2}}(q)& \coloneqq & \left\{ \C \St \C \mbox{ is an } \Fmk \mbox{ $2$-MRD code, for some } n,k,m \in \N \right\}, \\
 \A_{\mathrm{N}}(q)& \coloneqq & \left\{ \C \St \C \mbox{ is an } \Fmk \mbox{ near MRD code, for some } n,k,m \in \N \right\}.
\end{array} $$
 It is clear by the definition that 
  $$ \A_{\mathrm{N}}(q)\cup \A_{\mathrm{1}}(q) \subseteq \A_{\mathrm{2}}(q) \quad \mbox{ and } \quad \A_{\mathrm{N}}(q)\cap \A_{\mathrm{1}}(q)=\emptyset.$$
 Furthermore, as already mentioned in Remark \ref{rem:MRDvs1MRD}, $1$-MRD codes are also MRD, but the viceversa is not true. 
 
 \begin{proposition}
    There exist an MRD code $\C_1$ which is not $2$-MRD.. In other words,
  $$  \A_{\mathrm{2}}(q) \subsetneq \A(q). $$
 \end{proposition}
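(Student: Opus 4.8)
The statement asks us to exhibit an MRD code that is not 2-MRD, equivalently to show $\A_2(q) \subsetneq \A(q)$.

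\medskip

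The plan is to produce an explicit family where the Singleton bound \eqref{eq:singleton} is achieved (MRD) but the minimum of the two quantities in \eqref{eq:singleton} is the one coming from $n$ rather than from $m$, so that the code is \emph{not} $1$-MRD, and moreover the second generalized rank weight fails to reach $n-k+2$. The natural candidates are the dual Delsarte--Gabidulin-type codes that appear repeatedly in the excerpt: an $[\frac{mk}{2}, \frac{(m-2)k}{2}, 3]_{q^m/q}$ MRD code (the dual of a $[\frac{mk}{2},k,m-1]_{q^m/q}$ MRD code), or more simply a direct sum of such pieces. Concretely I would take $m$ even, $k$ even, $n = \frac{mk}{2}$, and let $\C$ be MRD with $d = \dd_{\rk}(\C) = 3$; such codes exist by the constructions cited in the excerpt (e.g.\ \cite{bartoli2018maximum,csajbok2017maximum} give the $[\frac{mk}{2},k,m-1]$ codes, whose duals are of this shape). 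Here $n = \frac{mk}{2} > m$ when $k \geq 3$, so by Remark \ref{rem:MRDvs1MRD} and Proposition \ref{prop:bounds_genweights}(2) this code is MRD but not $1$-MRD.

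\medskip

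The key computation is then to check $\dd_{\rk,2}(\C) < n - k + 2$. By Proposition \ref{prop:bounds_genweights}, $\dd_{\rk,2}(\C) \leq \frac{m}{n}(n-k) + m + 1$. With $n = \frac{mk}{2}$ one has $\frac{m}{n} = \frac{2}{k}$, so the bound reads $\dd_{\rk,2}(\C) \leq \frac{2(n-k)}{k} + m + 1$. On the other hand $n - k + 2 = \frac{mk}{2} - k + 2$. Comparing, one sees that for $k$ large (relative to a fixed $m$), or even just for a single well-chosen small case, the quantity $\frac{2(n-k)}{k} + m + 1$ is strictly smaller than $n - k + 2$; hence $\dd_{\rk,2}(\C)$ is forced below $n-k+2$ and $\C$ is not $2$-MRD. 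Alternatively, and perhaps more cleanly, I would argue via Corollary \ref{cor:hscattered_sMRD}: if $\C$ were $2$-MRD it would be $(k-2)$-MRD only when $k=2$, so instead I pass to the geometric side — a $2$-MRD $\Fmk$ code corresponds (via Theorem \ref{thm:evasive_generalizedweights} with $k - h = 2$, i.e.\ $h = k-2$) to the condition $\dd_{\rk,2}(\C) = n-k+2$, which by Theorem \ref{thm:evasive_generalizedweights} is equivalent to $\dd_{\rk, r-h+1}(\C^\perp) \geq r+2$ for suitable $r$; one then checks the associated $q$-system cannot be $(k-2, k-1)$-evasive once $n$ exceeds the $h$-scattered bound of Corollary \ref{cor:hscattered_bound} with $h = k-2$, namely $n \leq \frac{km}{k-1}$, which is violated by $n = \frac{mk}{2}$ as soon as $k \geq 3$.

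\medskip

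So the concrete proof I would write: fix, say, $q$ arbitrary, $m = 4$, $k = 4$, $n = 8$, and let $\C_1$ be an $[8,6,3]_{q^4/q}$ MRD code (the dual of an $[8,4,3]_{q^4/q}$ MRD code from \cite{bartoli2018maximum,csajbok2017maximum,BMN}); then $n = 8 > 4 = m$, so $\C_1$ is MRD but not $1$-MRD, and $\frac{km}{k-1} = \frac{16}{3} < 8 = n$ shows, via Corollary \ref{cor:hscattered_bound} and Corollary \ref{cor:hscattered_sMRD} applied to $h = k-2 = 2$, that no $\Fmk$ system of dimension $n$ can be $2$-scattered, hence $\C_1$ is not $2$-MRD. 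The main obstacle is purely bookkeeping: making sure the chosen parameter set genuinely supports an MRD code with $n > m$ (so citing the right existence result) and correctly translating ``$2$-MRD'' through Theorem \ref{thm:evasive_generalizedweights} / Corollary \ref{cor:hscattered_sMRD} to an evasiveness statement that the length forbids; no delicate estimate is needed beyond the elementary inequality $\frac{km}{k-1} < \frac{mk}{2}$ for $k \geq 3$.
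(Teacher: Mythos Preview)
Your core argument coincides with the paper's: take an MRD code associated to a maximum scattered $[\tfrac{km}{2},k]_{q^m/q}$ system with $k\ge 4$, and observe that if it were $2$-MRD then (Corollary~\ref{cor:hscattered_sMRD}) the system would be $(k-2)$-scattered, contradicting Corollary~\ref{cor:hscattered_bound} since $\tfrac{km}{2}>\tfrac{km}{k-1}$. The paper states this in full generality ($mk$ even, $k\ge4$) rather than for one numerical instance, and takes $\C_1\in\Psi([\mU])$ directly (i.e.\ the $[\tfrac{km}{2},k,m-1]_{q^m/q}$ code) rather than its dual.

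Two slips to fix. First, with $m=4$, $k=4$ the dual of an $[8,4]_{q^4/q}$ code is again an $[8,4]_{q^4/q}$ code, not $[8,6]_{q^4/q}$; an $[8,6,3]_{q^4/q}$ code does not even satisfy the Singleton bound since $6\cdot 4=24>8\cdot(4-3+1)=16$. Second, the strict inequality $\tfrac{km}{2}>\tfrac{km}{k-1}$ needs $k\ge 4$, not $k\ge 3$ (at $k=3$ it is an equality). Your concrete choice $k=4$ is fine, but the surrounding discussion should be adjusted. The long detour through Proposition~\ref{prop:bounds_genweights} and the dual-weight formulation is unnecessary: once you invoke Corollaries~\ref{cor:hscattered_sMRD} and~\ref{cor:hscattered_bound}, the proof is two lines.
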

 
 \begin{proof}
   Let $mk$ be even with $k \geq 4$ and let $\mU$ be any maximum scattered $[\frac{km}{2},k]_{q^m/q}$ system. Then any code $\C_1\in\Psi([\mU])$ is MRD (Theorem \ref{thm:hscattered_MRD}). If $\C_1$ is  also $2$-MRD, then by Corollary \ref{cor:hscattered_sMRD} it is also $(k-2)$-scattered. However, this is not possible by Corollary \ref{cor:hscattered_bound}, since $k-2 \geq 2$.   
 \end{proof}
 
 \begin{proposition}
  The codes $\C_2$ constructed in Proposition \ref{prop:nearMRD_firstConstruction} are near MRD but not MRD. In particular, 
  $$ \A_{\mathrm{N}}(q)\setminus \A(q) \neq \emptyset..$$
 \end{proposition}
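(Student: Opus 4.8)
The plan is to exhibit, among the codes $\C_2$ constructed in Proposition \ref{prop:nearMRD_firstConstruction}, a concrete member that fails to be MRD, and the most transparent way is a parameter count. Recall that $\C_2$ is an $[m+1,k]_{q^m/q}$ near MRD code, so by definition it has minimum rank distance $d=n-k=m+1-k$. For $\C_2$ to be MRD, the Singleton bound \eqref{eq:singleton} would have to be met with equality, i.e. $mk = \min\{m(n-d+1), n(m-d+1)\}$ with $n=m+1$ and $d=m+1-k$. The first term is $m(n-d+1)=m\cdot k=mk$, so the bound is automatically $mk \le mk$ on that side; the constraint that actually bites is the second term $n(m-d+1)=(m+1)\cdot k$, and we need $mk \le (m+1)k$, which is always true. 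Hence a direct appeal to \eqref{eq:singleton} alone does not immediately give non-MRD: I would instead compare with the \emph{second} generalized weight, or use the defining property of near MRD more carefully.

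Here is the cleaner route. By Proposition \ref{prop:nearMRD_firstConstruction} (via Proposition \ref{prop:nearMRD}), $\mU$ is $(k-2)$-scattered but \emph{not} $(k-1)$-scattered. If $\C_2$ were MRD, then since $\C_2$ is an $[m+1,k]_{q^m/q}$ code, consider its dual $\C_2^\perp$, an $[m+1,m+1-k]_{q^m/q}$ code. By Theorem \ref{thm:evasive_generalizedweights}, $\mU$ being $(k-1)$-scattered is equivalent to $\dd_{\rk}(\C_2^\perp)\ge k+1$; and $\mU$ being $(k-2)$-scattered but not $(k-1)$-scattered is equivalent to $\dd_{\rk}(\C_2^\perp)=k$ together with $\dd_{\rk,2}(\C_2^\perp)\ge k+3$ (using the ``in particular'' part of Theorem \ref{thm:evasive_generalizedweights} with $h=k-1$, $r=k$). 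On the other hand, an MRD $\C_2$ would force $\C_2^\perp$ to be MRD as well (by \cite{delsarte1978bilinear,gabidulin1985theory}), hence $1$-MRD since here $m+1 > m+1-k$ is irrelevant — more precisely $\C_2^\perp$ MRD of length $n=m+1$ and dimension $m+1-k$ over $\Fm$ with $n\le m$ false, so I must check the direction of the Singleton bound. Since $n=m+1>m$, the binding inequality for $\C_2^\perp$ is $n(m-d^\perp+1)\ge mk'$ with $k'=m+1-k$; MRD then gives $d^\perp = m+1 - \lceil mk'/n\rceil$ which one computes to be strictly less than $k+2$, contradicting $\dd_{\rk,1}(\C_2^\perp)=k$ only if that computed value differs from $k$. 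So the argument needs the exact value.

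Therefore the step I actually expect to carry out is the following short computation. For $\C_2$ near MRD of length $n=m+1$, dimension $k$, we have $\dd_{\rk}(\C_2)=m+1-k$ and $\dd_{\rk,2}(\C_2)=m+3-k$. If $\C_2$ were MRD it would in particular satisfy $\dd_{\rk,s}(\C_2)\le \frac{m}{n}(n-k)+m(s-1)+1$ from Proposition \ref{prop:bounds_genweights}, but more to the point, being MRD with $n>m$ means $\C_2$ is not $1$-MRD (Remark \ref{rem:MRDvs1MRD}), yet a near MRD code \emph{is} ``almost'' $1$-MRD on higher weights; the genuine obstruction is that an $[m+1,k]_{q^m/q}$ $1$-MRD code cannot exist because $m+1>m$ violates the necessary condition $m\ge n$ for $1$-MRD codes. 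So I would argue: were $\C_2$ MRD, then since its length exceeds $m$, $\C_2$ is MRD but not $1$-MRD, hence by definition of MRD we would need $mk = n(m-d+1) = (m+1)(k)$, i.e. $mk=(m+1)k$, which fails for $k\ge 1$. Thus $\C_2$ is not MRD, giving $\C_2 \in \A_{\mathrm N}(q)\setminus\A(q)$, and since $\A_{\mathrm N}(q)$ is nonempty this set is nonempty. The main obstacle is bookkeeping the asymmetry between $n$ and $m$ in the Singleton bound \eqref{eq:singleton}, so that one picks the correct branch $\min\{m(n-d+1),n(m-d+1)\}$ and sees that equality is impossible precisely when $n>m$; everything else is immediate from Proposition \ref{prop:nearMRD_firstConstruction} and Proposition \ref{prop:nearMRD}.
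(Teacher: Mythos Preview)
Your final paragraph lands on the correct argument, and it is exactly the paper's one-line proof: the code $\C_2$ is an $[m+1,k,m+1-k]_{q^m/q}$ near MRD code, and a direct parameter check against \eqref{eq:singleton} shows it is not MRD.

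However, your first paragraph contains an arithmetic slip that sends you on a long and unnecessary detour. With $n=m+1$ and $d=n-k=m+1-k$, one has $n-d+1=k+1$, not $k$; hence
\[
m(n-d+1)=m(k+1)=mk+m, \qquad n(m-d+1)=(m+1)k=mk+k.
\]
Both quantities strictly exceed $mk$ (since $m\ge k\ge 1$), so
\[
\min\{m(n-d+1),\,n(m-d+1)\}=mk+\min\{m,k\}>mk,
\]
and \eqref{eq:singleton} is \emph{not} met with equality. That is the entire proof; no appeal to $\C_2^\perp$, Theorem \ref{thm:evasive_generalizedweights}, or the $1$-MRD/MRD distinction is required. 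Your eventual computation $mk\ne(m+1)k$ in the last paragraph is precisely the second of these two inequalities, so you did arrive at the right place, just after an avoidable excursion caused by miscomputing $n-d+1$. Strip the proposal down to the parameter comparison and you have the paper's proof.
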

 
 \begin{proof}
   The codes $\C_2$ are $[m+1,k,m+1-k]_{q^m/q}$ near MRD codes. It is clear from their parameters that they cannot be MRD.
 \end{proof}
 
 \begin{proposition}
  For every prime power $q$, we have 
  $$ \A_{\mathrm{N}}(q)\cap \A(q) = \left\{ \C \St \C \mbox{ is a } [2k,k,k]_{q^{2k-2}/q} \mbox{ code, for some } k \in \N \right\}. $$
  Furthermore, there exists an MRD code $\C_3$ which is also near MRD. In other words,
  $$ \A_{\mathrm{N}}(q)\cap \A(q) \neq \emptyset.$$
 \end{proposition}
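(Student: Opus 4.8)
The plan is to prove the two assertions separately: first the set-theoretic identity, and then the non-emptiness, which will follow at once from the identity together with the existence results already in hand. For the identity $\A_{\mathrm{N}}(q)\cap\A(q)=\{\C : \C \text{ is a }[2k,k,k]_{q^{2k-2}/q}\text{ code, for some }k\in\N\}$, I would argue by double inclusion. Suppose first that $\C$ is an $\Fmk$ code which is simultaneously MRD and near MRD. Being near MRD, Theorem \ref{thm:nearMRD_characterization} forces $m\geq k$ and $n\leq m+2$, with equality $n=m+2$ only when $m=2k-2$; in that case $n=2k$ and $d=n-k=k$, which is exactly the target parameter set. So it suffices to rule out the cases $n\leq m+1$. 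If $\C$ is near MRD then $d_{\rk}(\C)=n-k$; if it is also MRD then its parameters meet \eqref{eq:singleton} with equality, i.e. $mk=\min\{m(n-d+1),n(m-d+1)\}=\min\{m(k+1),n(m-n+k+1)\}$. The first term gives $mk=m(k+1)$, impossible; hence $mk=n(m-n+k+1)$. Plugging $n=m+1$ yields $mk=(m+1)(k)= mk+k$, again impossible; and for $n\leq m$ a near MRD code with $d=n-k$ cannot be MRD either, since an $\Fmk$ MRD code with $n\leq m$ has minimum distance $n-k+1>n-k$ (this is the standard Delsarte--Gabidulin parameter range, and a code cannot be both MRD and have distance one less than $n-k+1$; more directly, for $n\le m$ the Singleton bound \eqref{eq:singleton} reads $mk\le m(n-d+1)$, i.e. $d\le n-k+1$, and MRD means $d=n-k+1\ne n-k$). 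Thus the only surviving parameters are $[2k,k,k]_{q^{2k-2}/q}$, giving the inclusion $\subseteq$.

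For the reverse inclusion, let $\C$ be any $[2k,k,k]_{q^{2k-2}/q}$ code. I first note that such codes need not a priori be MRD or near MRD just from their length and dimension, so the claim here must be read correctly: the right-hand set as written is the set of codes \emph{having those parameters}, and the identity asserts these are precisely the codes in $\A_{\mathrm N}(q)\cap\A(q)$. Hence for $\subseteq$ we only need what was shown above, while for $\supseteq$ the cleanest route is: such a parameter set, when it is realized by a near MRD code, is automatically also MRD, and vice versa. Concretely, for $n=2k$, $m=2k-2$ the Singleton bound \eqref{eq:singleton} gives $mk\le \min\{m(n-d+1),n(m-d+1)\}$; substituting $m=2k-2$, $n=2k$, the binding term is $n(m-d+1)=2k(2k-1-d)$, and $mk=(2k-2)k$, so the bound reads $d\le k$. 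A code with these parameters is MRD iff $d=k$ iff it is near MRD (since near MRD by definition has $d=n-k=k$, and for this parameter set $d=k$ together with $n=2k$, $m=2k-2$ is exactly the maximum $(k-2)$-scattered condition via Theorem \ref{thm:nearMRD_characterization}, Proposition \ref{prop:nearMRD} and Corollary \ref{cor:maxscatterd_hyperplane_intersection}). I should be slightly careful to phrase the statement so that "is a $[2k,k,k]$ code" is understood to carry the MRD/near-MRD content it forces; alternatively, reinterpret the displayed set as describing the common parameters, which is the intended reading and is what the double inclusion above establishes.

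Finally, for the non-emptiness $\A_{\mathrm{N}}(q)\cap\A(q)\neq\emptyset$, I would exhibit an explicit $\C_3$: by Theorem \ref{thm:existence_maxScattered}(2), since $km=k(2k-2)=2k(k-1)$ is even, there exists a maximum scattered $[\frac{km}{2},k]_{q^m/q}=[k(k-1),k]_{q^{2k-2}/q}$ system — but I actually want a maximum $(k-2)$-scattered $[2k,k]_{q^{2k-2}/q}$ system. For $k=2$ this is just a maximum $0$-scattered, i.e. any nondegenerate $[4,2]_{q^2/q}$ system, trivially realized; more usefully, take $k=2$: then $m=2k-2=2$, $n=2k=4$, and a $[4,2,2]_{q^2/q}$ MRD code exists (Delsarte--Gabidulin, since $n=4>m=2$? no — here one uses the direct-sum construction $t=2$, $k'=1$, giving a $[4,2,2]_{q^2/q}$ MRD code as the direct sum of two $[2,1,2]_{q^2/q}$ MRD codes, cf. the existence remark), and by Theorem \ref{thm:nearMRD_characterization} it is near MRD. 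Thus $\C_3$ exists and $\A_{\mathrm{N}}(q)\cap\A(q)\ni\C_3$. The main obstacle I anticipate is the bookkeeping in the case analysis of the first inclusion, in particular cleanly dispatching the $n\leq m$ subcase without circularity; the rest is a direct application of Theorem \ref{thm:nearMRD_characterization} and the Singleton bound.
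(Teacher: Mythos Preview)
Your argument is correct and follows essentially the same route as the paper: the forward inclusion is a case split on $n$ using Theorem~\ref{thm:nearMRD_characterization} (ruling out $n\le m$ because an MRD code there is $1$-MRD with $d=n-k+1\ne n-k$, and ruling out $n=m+1$ by direct computation in the Singleton bound), and the reverse inclusion is the chain $d=k\Rightarrow$ MRD $\Rightarrow$ $\mU$ maximum $(k-2)$-scattered $\Rightarrow$ near MRD via Theorems~\ref{thm:hscattered_MRD} and~\ref{thm:nearMRD_characterization}. Two minor remarks. First, your hedging about how to ``read'' the right-hand set is unnecessary: the notation $[2k,k,k]_{q^{2k-2}/q}$ already fixes $d=k$, and your own Singleton computation shows this forces MRD, so the reverse inclusion is cleaner than you make it; the paper simply cites Theorem~\ref{thm:nearMRD_characterization} and is done. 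Second, for the witness $\C_3$ the paper uses the $[6,3,3]_{q^4/q}$ codes discussed in Section~\ref{sec:3-4-dim} (maximum scattered $[6,3]_{q^4/q}$ systems exist by Theorem~\ref{thm:existence_maxScattered}(2)); your $k=2$ example $[4,2,2]_{q^2/q}$ via a direct sum is valid but degenerate, since ``$0$-scattered'' is vacuous and near MRD there reduces to nondegeneracy.
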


 \begin{proof}
By Theorem \ref{thm:nearMRD_characterization} we immediately deduce that  
 $$\A_{\mathrm{N}}(q)\cap \A(q) \supseteq  \left\{ \C \St \C \mbox{ is a } [2k,k,k]_{q^{2k-2}/q} \mbox{ code, for some } k \in \N \right\}.$$
  Let $\C$ be an $\Fmkd$ code in $\A_{\mathrm{N}}(q)\cap \A(q)$. By Theorem \ref{thm:nearMRD_characterization}, we have $n \leq m+2$. The case $n=m+2$ coincides with the one just described. Thus, we only need to prove that we cannot have $n\leq m+1$.  If $n\leq m$, then the code $\C$ would also be $1$-MRD, but it cannot be $1$-MRD and near MRD at the same time. Furthermore, if $n=m+1$, then $\C$ is an $[m+1,k,m+1-k]_{q^m/q}$ code, which is not MRD, since  $$ km< (m+1)k=\min\{m(n-d+1),n(m-d+1)\}=n(m-d+1).$$
   This shows the first claim. Finally, the $[6,3,3]_{q^4/q}$ codes $\C_3$ discussed in Section \ref{sec:3-4-dim} are near MRD (Theorem \ref{thm:nearMRD_characterization}) and MRD (Theorem \ref{thm:hscattered_MRD}). 
 \end{proof}

 \begin{proposition}
  There exist an MRD code $\C_4$ which is  $2$-MRD but neither $1$-MRD nor near MRD. In other words,
  $$\left(\A(q)\cap\A_{\mathrm{2}}(q)\right)\setminus \left(\A_{\mathrm{1}}(q)\cup \A_{\mathrm{N}}(q)\right)\neq \emptyset. $$
 \end{proposition}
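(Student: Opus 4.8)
The plan is to take for $\C_4$ a code attached to a \emph{maximum scattered} $q$-system in the special dimension $k=3$, where the notions ``$(k-1)$-MRD'' and ``$2$-MRD'' coincide. Concretely, fix any even integer $m\ge 6$, put $k=3$ and $n=\frac{3m}{2}$. Since $km=3m$ is even, Theorem \ref{thm:existence_maxScattered}(2) provides a maximum scattered $[\frac{3m}{2},3]_{q^m/q}$ system $\mU$, and I let $\C_4\in\Psi([\mU])$ be any (nondegenerate) code associated to it; recall that being MRD, $s$-MRD or near MRD depends only on the equivalence class of a code, so the choice of representative is irrelevant. I then check the four membership conditions directly.

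First I would establish $\C_4\in\A(q)\cap\A_{\mathrm{2}}(q)$. Since $h+1=2$ divides $km=3m$ and $n=\frac{km}{2}=\frac{km}{h+1}$, Theorem \ref{thm:hscattered_MRD} applied with $h=1$ shows that $\C_4$ is MRD. Because $n=\frac{3m}{2}>m$, for any $d\ge 1$ one has $n(m-d+1)\le m(n-d+1)$, so equality in the Singleton bound \eqref{eq:singleton} reads $mk=n(m-\dd_{\rk}(\C_4)+1)$, which forces $\dd_{\rk}(\C_4)=m-1$; that is, $\C_4$ is a $[\frac{3m}{2},3,m-1]_{q^m/q}$ code. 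On the other hand $\mU$ is scattered, i.e.\ $1$-scattered, so Corollary \ref{cor:hscattered_sMRD} with $h=1$ and $k=3$ gives that $\C_4$ is $(k-1)=2$-MRD. Hence $\C_4\in\A(q)\cap\A_{\mathrm{2}}(q)$.

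Next I would check $\C_4\notin\A_{\mathrm{1}}(q)\cup\A_{\mathrm{N}}(q)$, and the only arithmetic required is
$$ \dd_{\rk}(\C_4)=m-1<\frac{3m}{2}-3=n-k, $$
which holds precisely because $m>4$. From $\dd_{\rk}(\C_4)<n-k<n-k+1$ we see that $\C_4$ is not $1$-MRD, so $\C_4\notin\A_{\mathrm{1}}(q)$. Moreover, since $\C_4$ is $2$-MRD it already satisfies $\dd_{\rk,s}(\C_4)=n-k+s$ for all $2\le s\le k$; hence, by the remark following the definition of near MRD codes (equivalently, by Proposition \ref{prop:nearMRD}), $\C_4$ is near MRD if and only if $\dd_{\rk}(\C_4)=n-k$, which fails as $\dd_{\rk}(\C_4)<n-k$. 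Therefore $\C_4\notin\A_{\mathrm{N}}(q)$, and altogether $\C_4\in(\A(q)\cap\A_{\mathrm{2}}(q))\setminus(\A_{\mathrm{1}}(q)\cup\A_{\mathrm{N}}(q))$.

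There is no genuine obstacle in this argument; the single point requiring care is the choice of $m$. For $m=4$ (so $n=6$, $k=3$) one recovers exactly the $[2k,k,k]_{q^{2k-2}/q}$ codes of the previous proposition, which lie in $\A_{\mathrm{N}}(q)\cap\A(q)$, and for $m=2$ the code is trivial; this is why we impose $m\ge 6$. A concrete instance of $\C_4$ is thus any maximum scattered $[9,3]_{q^6/q}$ code, for an arbitrary prime power $q$.
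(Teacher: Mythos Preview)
Your proof is correct and follows essentially the same approach as the paper: both take $\C_4$ associated to a maximum scattered $[\tfrac{3m}{2},3]_{q^m/q}$ system with $m\ge 6$ even, use Theorem~\ref{thm:hscattered_MRD} and Corollary~\ref{cor:hscattered_sMRD} for the inclusions in $\A(q)\cap\A_2(q)$, and then verify the exclusions. The only cosmetic difference is that for $\C_4\notin\A_1(q)\cup\A_{\mathrm N}(q)$ you compute $\dd_{\rk}(\C_4)=m-1<n-k$ directly, whereas the paper appeals to the length bounds $n>m$ (with Corollaries~\ref{cor:hscattered_sMRD} and~\ref{cor:hscattered_bound}) and $n>m+2$ (with Theorem~\ref{thm:nearMRD_characterization}); both are equally valid.
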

  \begin{proof}
   Let $m\geq 6$ be even and let $\mU$ be a maximum scattered $[\frac{3m}{2},3]_{q^m/q}$ system. Since $3m/2>m$, any code  $\C_4 \in \Psi([\mU])$ cannot be $1$-MRD, due to  Corollaries \ref{cor:hscattered_sMRD} and \ref{cor:hscattered_bound}. Furthermore, since we also have that $3m/2>m+2$, by Theorem \ref{thm:nearMRD_characterization} the code $\C_4$ cannot be near MRD.
 \end{proof} 
 
 \begin{proposition}
  There exist a $2$-MRD code $\C_5$ which is  neither MRD nor near MRD. In other words,
  $$\A_{\mathrm{2}}(q)\setminus \left(\A(q)\cup \A_{\mathrm{N}}(q)\right)\neq \emptyset. $$
 \end{proposition}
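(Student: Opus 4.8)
The plan is to build such a code from its associated $q$-system, using two observations. First, by Corollary~\ref{cor:hscattered_sMRD} a nondegenerate $[n,3]_{q^m/q}$ code is $2$-MRD \emph{exactly} when its associated $q$-system is scattered (here $k-2=1$). Second, a near MRD code has $\dd_{\rk}(\C)=n-k$ by definition, and an MRD code of length $n>k$ with $\dd_{\rk}(\C)=1$ does not exist. Hence it suffices to exhibit, for each prime power $q$, a \emph{scattered} $[n,3]_{q^m/q}$ system $\mathcal U$ possessing an $\Fm$-hyperplane that meets $\mathcal U$ in $\Fq$-dimension strictly larger than $k=3$: the corresponding code is then $2$-MRD but has $\dd_{\rk}(\C)<n-3$, which rules out near MRD, and it is not MRD either.

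Concretely, for an arbitrary prime power $q$ I would fix $m\ge 4$, set $n=m+1$, and take
$$ \mathcal U\coloneqq\{(\mu,x,x^q)\St \mu\in\Fq,\ x\in\Fm\}\subseteq\Fm^3, $$
i.e.\ the ``direct sum'' of the trivial $[1,1]_{q^m/q}$ system $\Fq\subseteq\Fm$ with the pseudoregulus-type system $\{(x,x^q)\St x\in\Fm\}\subseteq\Fm^2$; equivalently, $\C_5\in\Psi([\mathcal U])$ is the direct sum of a trivial $[1,1,1]_{q^m/q}$ code and a Gabidulin $[m,2,m-1]_{q^m/q}$ code. First I would check that $\mathcal U$ is indeed an $[m+1,3]_{q^m/q}$ system: it is the image of an injective $\Fq$-linear map, so $\dim_{\Fq}\mathcal U=m+1$, and it spans $\Fm^3$ because $\Fq$ spans $\Fm$ and, for any $\alpha\in\Fm\setminus\Fq$, the vectors $(1,1)$ and $(\alpha,\alpha^q)$ span $\Fm^2$.

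Next I would verify that $\mathcal U$ is scattered. Given $0\ne v=(a,b,c)\in\Fm^3$, a scalar multiple $\lambda v$ lies in $\mathcal U$ precisely when $\lambda a\in\Fq$ and $\lambda c=\lambda^q b^q$. If $b\ne 0$, the second condition forces $\lambda$ into the kernel of the $\Fq$-linear $q$-polynomial $\lambda\mapsto \lambda^q b^q-c\lambda$, which has $q$-degree $1$ and hence $\Fq$-kernel of dimension at most $1$; if $b=0$ then either $c\ne 0$, forcing $\lambda=0$, or $b=c=0$ and $a\ne0$, forcing $\lambda\in\Fq a^{-1}$. In every case $\dim_{\Fq}(\langle v\rangle_{\Fm}\cap\mathcal U)\le 1$, so $\mathcal U$ is scattered, and by Corollary~\ref{cor:hscattered_sMRD} any $\C_5\in\Psi([\mathcal U])$ is a $2$-MRD code.

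Finally I would compute $\dd_{\rk}(\C_5)$: the $\Fm$-hyperplane $H=\{0\}\times\Fm^2$ satisfies $\mathcal U\cap H=\{(0,x,x^q)\St x\in\Fm\}$, which has $\Fq$-dimension $m$, so by Theorem~\ref{thm:gen_weights_geometric} (equivalently \eqref{eq:rank_weight}) $\dd_{\rk}(\C_5)\le(m+1)-m=1$, whence $\dd_{\rk}(\C_5)=1$. Since $n-k=m-2\ge 2>1$, the code is not near MRD; and for $d=1$ and $n>k$ the bound \eqref{eq:singleton} reads $mk\le\min\{mn,nm\}=mn$, which is strict, so $\C_5$ is not MRD. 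Thus $\C_5\in\A_{\mathrm{2}}(q)\setminus(\A(q)\cup\A_{\mathrm{N}}(q))$. I do not expect any serious difficulty; the one point deserving care is the scatteredness check for the padded direct sum, namely ruling out the ``mixed'' intersections arising when the first coordinate of $v$ is nonzero, which is exactly the case $a\ne0$ treated above.
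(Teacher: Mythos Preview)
Your proof is correct and takes a genuinely different route from the paper. The paper chooses, for even $m\ge 8$, an arbitrary $[n,3]_{q^m/q}$ subsystem $\mathcal V$ of a maximum scattered $[\tfrac{3m}{2},3]_{q^m/q}$ system with $m+3\le n<\tfrac{3m}{2}$; scatteredness is inherited (giving $2$-MRD), near MRD is excluded via the length bound of Theorem~\ref{thm:nearMRD_characterization} (since $n>m+2$), and MRD is ruled out by a Singleton-bound computation on the possible values of $d$. Your construction is instead completely explicit: the padded pseudoregulus $\mathcal U=\{(\mu,x,x^q):\mu\in\Fq,\,x\in\Fm\}$ is checked to be scattered by hand, and you exhibit a concrete hyperplane witnessing $\dd_{\rk}(\C_5)=1$, which immediately kills both near MRD (since $1<n-k$) and MRD (since $k<n$). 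Your argument is more elementary and self-contained---it needs neither the existence results of Theorem~\ref{thm:existence_maxScattered} nor the near-MRD length bound---while the paper's proof illustrates how those structural results can be used as black boxes. One small stylistic point: your opening sentence ``an MRD code of length $n>k$ with $\dd_{\rk}(\C)=1$ does not exist'' is a forward reference to the short computation you give at the end; it might read more smoothly to simply say at the outset that you will force $\dd_{\rk}(\C_5)=1$ and then check at the end that this precludes both MRD and near MRD.
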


\begin{proof}
 Let $m\geq 8$ be even and let $\mU$ be a maximum scattered $[\frac{3m}{2},3,m-1]_{q^m/q}$ system.  Let us choose any subspace $\mV$ that is an $[n,3,d]_{q^m/q}$ system, with $m+3 \leq  n < 3m/2$. Clearly, $\mV$ is still a scattered system and $d\leq m-1$. Hence, by Corollary \ref{cor:hscattered_sMRD}, any $\C_5 \in \Psi([\mV])$ is $2$-MRD. Furthermore, by Theorem \ref{thm:nearMRD_characterization} the code $\C_5$ cannot be near MRD, because of its length. Finally, if $d=m-1$ then we have
 $$km=3m>2n = n(m-d+1)=\min\{m(n-d+1),n(m-d+1)\}, $$ 
yielding a contradiction. Hence, we have $d\leq m-2$ and 
 \begin{align*}  km&=3m<3m+9 =3(m+3)\leq (m+3)(m-d+1)\leq n(m-d+1)\\
 &=\min\{m(n-d+1),n(m-d+1)\},\end{align*}
 and   $\C_5$ cannot be MRD.
\end{proof}

All these relations are graphically summarized in Figure \ref{fig:MRD}.

\begin{figure}[h!]
    \centering
    \begin{tikzpicture}[thick,auto,>=stealth,every edge/.append style={->},
    elli/.style={ellipse,draw,inner sep=0pt,
    simplify={#1},minimum width=2*\pgfkeysvalueof{/tikz/ellpar/a},
    minimum height=2*\pgfkeysvalueof{/tikz/ellpar/b}},
    simplify/.code={\tikzset{ellpar/.cd,#1}},
    ellpar/.cd,a/.initial=1cm,b/.initial=1cm]
 \path (-2,4) node[purple,elli={a=1.6cm,b=3cm},fill=purple!10,
    label={[purple]left:\scriptsize{2-MRD}}]{}
   (-1.5,5.3) node[purple!50,elli={a=0.8cm,b=1cm},fill=purple!15,
    label={[purple!50]left:\scriptsize{1-MRD}}](Omega){}
   (-1.2,3.55) node[purple!50!black,elli={a=0.7cm,b=0.7cm},fill=purple!20,
   label={[purple!50!black]left:\scriptsize{near MRD}\!}](chiB){}
   (-1,5) node[blue!40!purple,elli={a=2.5cm,b=1.4cm},fill=blue!10,
    label={[blue!40!purple]right:\scriptsize{MRD}}]{};
    \path (-2,4) node[purple,elli={a=1.6cm,b=3cm},
    label={[purple]left:\scriptsize{2-MRD}}]{}
   (-1.5,5.3) node[purple!80!blue,elli={a=0.8cm,b=1cm},
    label={[purple!80!blue]left:\scriptsize{1-MRD}}](Omega){}
   (-1.2,3.55) node[purple!50!black,elli={a=0.7cm,b=0.7cm},
   label={[purple!50!black]left:\scriptsize{near MRD}\!}](chiB){}
   (-1,5) node[blue!40!purple,elli={a=2.5cm,b=1.4cm},
    label={[blue!40!purple]right:\scriptsize{MRD}}]{};
    \path(-1.17,3.9) node(v) [circle] {\small{{\textbullet \;$\C_3$}}};
        \path(-2.61,4.5) node(v) [circle] {\small{{\textbullet \;$\C_4$}}};
        \path(-1.11,3.27) node(v) [circle] {\small{{\textbullet \;$\C_2$}}};
        \path(0.5,5.02) node(v) [circle] {\small{{\textbullet \;$\C_1$}}};
        \path(-2.1,2.2) node(v) [circle] {\small{{\textbullet \;$\C_5$}}};
        \path(-1.4,5.3) node(v) [circle] {\small{{\textbullet \;$\mathcal G$}}};

\end{tikzpicture}
    \caption{An illustration of the relations between the femilies of MRD, near MRD, $1$-MRD and $2$-MRD codes. The code indicated by $\mathcal G$ is a Delsarte-Gabidulin code.}
    \label{fig:MRD}
\end{figure}

\section{Quasi-maximum $h$-scattered subspaces and quasi-MRD codes}\label{sec:quasi_quasi}

When studying rank-metric codes, one may want to analyze codes with very good parameters, and try to get the maximum possible distance for the given dimension $k$, length $n$ and extension field degree $m$. Thus, one may want to rewrite the Singleton-like bound of \eqref{eq:singleton} as a pure upper bound on $d$, obtaining 
$$ d \leq \min \left\{ n-k+1,m-\frac{km}{n}+1 \right\}.$$
In doing so, we can see that the situation in which the minimum is attained by the fractional quantity, that is when $n>m$, cannot produce an MRD code if $n$ does not divide $km$. Hence, this motivated the study of $\Fmkd$ codes such that 
$$ d= m- \left\lceil \frac{km}{n}\right\rceil +1,$$
whose properties were hence investigated in \cite{de2018weight}. In this section we will study this class of codes,  analyze their geometric aspects and show their existence  using geometric arguments. 
\subsection{Definitions and correspondence}

More generally, we can link $h$-scattered subspaces with rank-metric codes with specific parameters. This is clear already from Theorem \ref{thm:correspondence_codes_systems} and Theorem \ref{thm:evasive_generalizedweights}. However, here we analyze this correspondence more in detail, also in terms of the rank defect of a rank-metric code. 

\begin{definition}
The \textbf{Singleton rank defect} (or simply \textbf{rank defect}) of an $\Fmkd$ code  $\C$  is given by
$$ \Rdef(\C)\coloneqq m-\left\lceil \frac{\dim_{\Fq}(\mC)}{n}\right\rceil-d+1=m-\left\lceil \frac{km}{n}\right\rceil-d+1.$$
A code is said to be \textbf{quasi-MRD} if $\Rdef(\C)=0$.
\end{definition}

Notice that for the definition of Singleton rank defect to be meaningful, it is needed that $n>m$, since it is based on the Singleton-like bound derived when $n\geq m$.  However, this assumption is not required for the truthfulness of what follows.

The following result relates $h$-scattered subspaces with rank-metric codes of a certain Singleton rank defect, and it is a generalization of Theorem \ref{thm:hscattered_MRD}.
\begin{theorem}\label{prop:rankdefect_hscattered}
 Let $\C$ be an $\Fmkd$ code and let $\mU\in\Phi([\C])$. Then, $\mU$ is $h$-scattered if and only if $\Rdef(\C^\perp)\leq\left\lfloor\frac{km}{n} \right\rfloor -h-1$.
\end{theorem}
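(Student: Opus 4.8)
The plan is to reduce the statement to the already-established correspondence in Theorem~\ref{thm:evasive_generalizedweights}, which translates the $h$-scattered property of $\mU$ into a lower bound on the minimum rank distance of $\C^\perp$. Concretely, by Theorem~\ref{thm:evasive_generalizedweights} applied with $r=h$ (so that $(h,h)$-evasive means $h$-scattered), the system $\mU$ is $h$-scattered if and only if $\dd_{\rk}(\C^\perp)=\dd_{\rk,1}(\C^\perp)\geq h+2$. The whole proof then amounts to showing that the inequality $\dd_{\rk}(\C^\perp)\geq h+2$ is equivalent to $\Rdef(\C^\perp)\leq \lfloor \frac{km}{n}\rfloor - h - 1$, which is just unwinding the definition of the Singleton rank defect.

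First I would set $\C' := \C^\perp$, which is an $[n, n-k]_{q^m/q}$ code, and write $d' := \dd_{\rk}(\C')$. By definition, $\Rdef(\C') = m - \lceil \frac{(n-k)m}{n}\rceil - d' + 1$. The key elementary observation is that $\lceil \frac{(n-k)m}{n}\rceil = m - \lfloor \frac{km}{n}\rfloor$: indeed $\frac{(n-k)m}{n} = m - \frac{km}{n}$, and for any real $x$ one has $\lceil m - x\rceil = m - \lfloor x\rfloor$ when $m\in\Z$. Substituting, $\Rdef(\C') = m - (m - \lfloor \frac{km}{n}\rfloor) - d' + 1 = \lfloor \frac{km}{n}\rfloor - d' + 1$. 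Hence $\Rdef(\C')\leq \lfloor \frac{km}{n}\rfloor - h - 1$ is equivalent to $\lfloor \frac{km}{n}\rfloor - d' + 1 \leq \lfloor \frac{km}{n}\rfloor - h - 1$, i.e.\ to $d'\geq h+2$.

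Putting the two equivalences together: $\mU$ is $h$-scattered $\iff$ $\dd_{\rk}(\C^\perp)\geq h+2$ $\iff$ $\Rdef(\C^\perp)\leq \lfloor \frac{km}{n}\rfloor - h - 1$. I should also make a brief remark addressing the degeneracy hypothesis implicit in invoking Theorem~\ref{thm:evasive_generalizedweights}: that theorem is stated for a nondegenerate code $\C$, so one needs $\C$ nondegenerate, equivalently $\dd_{\rk}(\C^\perp)\geq 2$ by Proposition~\ref{prop:nondegenerate}. In the relevant regime $h\geq 1$ this is automatic from the very inequality under consideration, and for the $h=0$ edge case (where ``$0$-scattered'' is vacuous for every $q$-system and $\Rdef(\C^\perp)\leq \lfloor\frac{km}{n}\rfloor-1$ is likewise automatic) both sides hold trivially; I would note this so the statement is clean for all $h$.

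I do not anticipate a genuine obstacle here — the result is essentially a dictionary entry. The only point requiring a moment's care is the ceiling/floor identity $\lceil \frac{(n-k)m}{n}\rceil = m - \lfloor \frac{km}{n}\rfloor$, which must be justified (it uses $m\in\Z$ and the identity $\lceil -x\rceil = -\lfloor x\rfloor$), and the bookkeeping with the dual parameters $[n,n-k]$ versus $[n,k]$. Everything else is a direct citation of Theorem~\ref{thm:evasive_generalizedweights} specialized to $h=r$.
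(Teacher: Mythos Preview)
Your proposal is correct and follows essentially the same route as the paper's proof: invoke Theorem~\ref{thm:evasive_generalizedweights} to get that $\mU$ is $h$-scattered iff $\dd_{\rk}(\C^\perp)\ge h+2$, then rewrite $\Rdef(\C^\perp)$ using the identity $\lceil (n-k)m/n\rceil = m-\lfloor km/n\rfloor$ to turn the distance bound into the rank-defect bound. Your added justification of the ceiling/floor identity and the remark on nondegeneracy are details the paper leaves implicit, but the argument is otherwise identical.
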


\begin{proof}
 Recall that by Theorem \ref{thm:evasive_generalizedweights} we have that $\mU$ is $h$-scattered if and only if $\dd_{\rk}(\C^\perp)\geq h+2$. By definition of Singleton rank defect, we have that this is true if and only if
 $$\Rdef(\C^{\perp})=m-\left\lceil \frac{(n-k)m}{n}\right\rceil-\dd_{\rk}(\C^\perp)+1
 =\left\lfloor \frac{km}{n}\right\rfloor-\dd_{\rk}(\C^\perp)+1\leq\left\lfloor \frac{km}{n}\right\rfloor-h-1.  $$
\end{proof}

With this in mind, one can try to  study the existence of $h$-scattered subspaces  of maximum possible dimension when $h+1$ does not divide $km$, and relate them with the existence of special classes of rank-metric codes. We now describe the situation in which we want to work.

Assume that $h+1$ does not divide $km$ and let us write $km=(h+1)a+\epsilon$, with $1\leq \epsilon \leq h$. Then every $h$-scattered subspace/linear set has rank at most
$$n\leq \frac{km-\epsilon}{h+1}.$$

\begin{definition}
An $h$-scattered $\Fmk$ system is said to be \textbf{quasi-maximum} if its parameters meet  this bound with equality, that is, if its rank $n$ satisfies
$$ n=\left\lfloor\frac{km}{h+1} \right\rfloor =  \frac{km-\epsilon}{h+1},$$
where $\epsilon=km-(h+1)\lfloor\frac{km}{h+1} \rfloor$.
\end{definition}

\begin{remark}
We want to highlight the fact that the objects that we introduced as \emph{quasi-maximum} $h$-scattered systems have been usually called maximum $h$-scattered, like for the case of $h+1$ dividing $km$. However, we believe that the two cases are significantly different, as we will see in the rest of this section, and this is why we use this new name.
\end{remark}

We can immediately derive the following characterization of quasi-maximum $h$-scattered subspaces in terms of its associated code(s).

\begin{corollary}\label{cor:quasimaximum_rankdefect}
 Let $\mU$ be  an $\Fmk$ system and let $\C\in\Psi([\mU])$. Then, $\mU$ is quasi-maximum $h$-scattered if and only if $\Rdef(\C^\perp) \le \left\lfloor\frac{\epsilon}{a}\right\rfloor$, where $a$ and $\epsilon$ are the unique integers such that $km=a(h+1)+\epsilon$, with $0\leq \epsilon \leq h$. In other words, $a=\lfloor\frac{km}{h+1} \rfloor$ and  $\epsilon=km-(h+1)\lfloor\frac{km}{h+1} \rfloor$.
\end{corollary}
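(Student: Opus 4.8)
The plan is to derive the statement directly from Theorem~\ref{prop:rankdefect_hscattered}, which already tells us that $\mU$ is $h$-scattered if and only if $\Rdef(\C^\perp)\leq\lfloor\frac{km}{n}\rfloor-h-1$. The only thing left to do is to convert the two conditions defining \emph{quasi-maximum} $h$-scattered — namely, that $\mU$ is $h$-scattered \emph{and} that $n=\lfloor\frac{km}{h+1}\rfloor=\frac{km-\epsilon}{h+1}$ — into the single inequality $\Rdef(\C^\perp)\leq\lfloor\frac{\epsilon}{a}\rfloor$. So first I would simply substitute $n=\frac{km-\epsilon}{h+1}=a$ (since $a=\lfloor\frac{km}{h+1}\rfloor$) into the bound coming from Theorem~\ref{prop:rankdefect_hscattered}, obtaining that in this case the $h$-scattered condition reads $\Rdef(\C^\perp)\leq\lfloor\frac{km}{a}\rfloor-h-1$.

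Next I would compute $\lfloor\frac{km}{a}\rfloor-h-1$ explicitly. Writing $km=a(h+1)+\epsilon$ with $0\leq\epsilon\leq h$, we get $\frac{km}{a}=h+1+\frac{\epsilon}{a}$, hence $\lfloor\frac{km}{a}\rfloor=h+1+\lfloor\frac{\epsilon}{a}\rfloor$, and therefore $\lfloor\frac{km}{a}\rfloor-h-1=\lfloor\frac{\epsilon}{a}\rfloor$. This is the elementary arithmetic heart of the argument. Plugging this back in shows that \emph{for a $q$-system of rank exactly $n=a$}, being $h$-scattered is equivalent to $\Rdef(\C^\perp)\leq\lfloor\frac{\epsilon}{a}\rfloor$.

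The one remaining subtlety — and the step I expect to be the main (though still minor) obstacle — is the logical bookkeeping about which direction needs the rank to be fixed at $n=a$. For the forward implication there is nothing to check: if $\mU$ is quasi-maximum $h$-scattered then by definition $n=a$ and $\mU$ is $h$-scattered, so the computed equivalence applies. For the converse one should observe that the inequality $\Rdef(\C^\perp)\leq\lfloor\frac{\epsilon}{a}\rfloor$ can only hold when $\Rdef(\C^\perp)\geq 0$, forcing $\lfloor\frac{\epsilon}{a}\rfloor\geq 0$ which is automatic, but more importantly one must note that the statement of the corollary implicitly fixes $\mU$ to already have the rank under discussion; combined with Theorem~\ref{prop:rankdefect_hscattered} the inequality $\Rdef(\C^\perp)\leq\lfloor\frac{\epsilon}{a}\rfloor=\lfloor\frac{km}{n}\rfloor-h-1$ forces $n\leq a$, while the $h$-scattered bound $n\leq\frac{km-\epsilon}{h+1}=a$ is then met with equality, so $n=a$ and $\mU$ is quasi-maximum $h$-scattered. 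I would write this up as: apply Theorem~\ref{prop:rankdefect_hscattered}, substitute $n=a$, and finish with the one-line floor computation $\lfloor\frac{km}{a}\rfloor=h+1+\lfloor\frac{\epsilon}{a}\rfloor$.
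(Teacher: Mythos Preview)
Your proposal is correct and follows exactly the paper's approach: the paper's entire proof is the single sentence ``This is an immediate consequence of Theorem~\ref{prop:rankdefect_hscattered},'' and your argument simply unpacks that consequence by substituting $n=a$ and carrying out the floor computation $\lfloor km/a\rfloor = h+1+\lfloor\epsilon/a\rfloor$. Your discussion of the converse is slightly over-elaborate---once one reads the corollary as being stated for a system with $n=a$ (which is the intended reading, as confirmed by how it is invoked in Theorem~\ref{thm:quasimaximum_quasiMRD}), both directions reduce immediately to Theorem~\ref{prop:rankdefect_hscattered} plus that one-line arithmetic, and the ``forces $n\le a$'' detour is unnecessary.
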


\begin{proof}
This is an immediate consequence of Theorem \ref{prop:rankdefect_hscattered}.
\end{proof}

As a consequence of Theorem \ref{prop:rankdefect_hscattered}, we derive a general result relating $h$-scattered $\Fmk$ system of maximum dimension  with quasi-MRD codes. The correspondence that we obtain is valid also when $h+1$ does not divide $km$, generalizing  Theorem \ref{thm:hscattered_MRD}.

\begin{theorem}[Correspondence quasi-maximum $h$-scattered subspaces -- quasi-MRD codes]\label{thm:quasimaximum_quasiMRD}
 Let $n=\lfloor\frac{km}{h+1}\rfloor$. Let $\mU$ be an $\Fmk$ system and $\C\in\Psi([\mU])$. Moreover, assume that 
 \begin{equation}\label{eq:condition_quasi} \left(km-(h+2)\left\lfloor\frac{km}{h+1}\right\rfloor\right)<0. \end{equation} 
 Then, $\mU$ is quasi-maximum $h$-scattered if and only if $\C^{\perp}$ is quasi-MRD.
\end{theorem}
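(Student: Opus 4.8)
The statement to prove is Theorem \ref{thm:quasimaximum_quasiMRD}: under the hypothesis \eqref{eq:condition_quasi}, $\mU$ is quasi-maximum $h$-scattered if and only if $\C^\perp$ is quasi-MRD. The natural approach is to combine Corollary \ref{cor:quasimaximum_rankdefect} (or directly Theorem \ref{prop:rankdefect_hscattered}) with the definition of quasi-MRD code, and to show that under the extra hypothesis the bound ``$\Rdef(\C^\perp)\le\lfloor\epsilon/a\rfloor$'' collapses to ``$\Rdef(\C^\perp)=0$''.

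First I would set $a=\lfloor\frac{km}{h+1}\rfloor=n$ and write $km=a(h+1)+\epsilon$ with $0\le\epsilon\le h$; the running assumption that $h+1\nmid km$ means $1\le\epsilon\le h$, so in particular $\epsilon<h+1\le a$ would suffice — but I need to be careful, since $a$ could a priori be small. This is exactly where hypothesis \eqref{eq:condition_quasi} enters: $km-(h+2)\lfloor\frac{km}{h+1}\rfloor<0$ rewrites as $km<(h+2)a$, i.e. $a(h+1)+\epsilon<(h+2)a$, i.e. $\epsilon<a$. Hence $\lfloor\epsilon/a\rfloor=0$. Then by Corollary \ref{cor:quasimaximum_rankdefect}, $\mU$ is quasi-maximum $h$-scattered if and only if $\Rdef(\C^\perp)\le 0$; since the rank defect is always nonnegative (it equals $m-\lceil\frac{\dim_{\Fq}\C^\perp}{n}\rceil-\dd_{\rk}(\C^\perp)+1$, which is $\ge 0$ by the Singleton-like bound \eqref{eq:singleton}), this is equivalent to $\Rdef(\C^\perp)=0$, i.e. to $\C^\perp$ being quasi-MRD.

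There is one subtlety to address cleanly: Corollary \ref{cor:quasimaximum_rankdefect} (equivalently Theorem \ref{prop:rankdefect_hscattered}) is phrased with $\C\in\Psi([\mU])$, so I should note that $\C^\perp$ is the rank-metric dual $\mU^\perp$ (well-defined up to equivalence), and that whether $\C^\perp$ is quasi-MRD depends only on the equivalence class, so the statement is well-posed. I would also recall that $\C^\perp$ is an $[n,n-k]_{q^m/q}$ code with $\dim_{\Fq}\C^\perp=(n-k)m$, so $\Rdef(\C^\perp)=m-\lceil\frac{(n-k)m}{n}\rceil-\dd_{\rk}(\C^\perp)+1=\lfloor\frac{km}{n}\rfloor-\dd_{\rk}(\C^\perp)+1$, which is the computation already done inside the proof of Theorem \ref{prop:rankdefect_hscattered}; since $n=a=\lfloor\frac{km}{h+1}\rfloor$ this equals $(h+1)+\epsilon/n$ floored... actually one should be slightly careful computing $\lfloor km/n\rfloor$, but $km=n(h+1)+\epsilon$ with $0\le\epsilon<n$ gives $\lfloor km/n\rfloor=h+1$ directly. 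So $\Rdef(\C^\perp)=h+2-\dd_{\rk}(\C^\perp)$, and quasi-MRD $\iff\dd_{\rk}(\C^\perp)=h+2\iff\mU$ is $h$-scattered of the maximal dimension $n=a$, which is quasi-maximum.

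The main obstacle — really the only nontrivial point — is verifying that hypothesis \eqref{eq:condition_quasi} is exactly what forces $\lfloor\epsilon/a\rfloor=0$ (equivalently $a>\epsilon$), and conversely checking that without it the corollary only gives the weaker inequality; so I should be explicit that \eqref{eq:condition_quasi} $\Leftrightarrow km<(h+2)n \Leftrightarrow \epsilon<n=a$. Everything else is bookkeeping with floors and an appeal to the already-proven Theorem \ref{prop:rankdefect_hscattered} (hence to Theorem \ref{thm:evasive_generalizedweights}) together with the fact that $\Rdef\ge 0$ always.
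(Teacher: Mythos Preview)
Your proposal is correct and follows essentially the same route as the paper: rewrite \eqref{eq:condition_quasi} as $\epsilon<a$ so that $\lfloor\epsilon/a\rfloor=0$, and then invoke Corollary~\ref{cor:quasimaximum_rankdefect} (together with the nonnegativity of the rank defect) to obtain $\Rdef(\C^\perp)=0$. Your write-up is in fact slightly more explicit than the paper's in justifying the equivalence $\eqref{eq:condition_quasi}\Leftrightarrow\epsilon<a$ and in noting why $\Rdef(\C^\perp)\le 0$ forces equality.
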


\begin{proof}
 Let us consider the integers $a=\lfloor\frac{km}{h+1} \rfloor$ and  $\epsilon=km-(h+1)\lfloor\frac{km}{h+1} \rfloor$. Then we have
  $$ \left\lfloor\frac{\epsilon}{a}\right\rfloor=\left\lfloor \frac{km-(h+1)\left\lfloor \frac{km}{h+1}\right\rfloor}{\left\lfloor \frac{km}{h+1}\right\rfloor}\right\rfloor,$$
  which is equal to $0$ whenever 
  $$ \left(km-(h+2)\left\lfloor\frac{km}{h+1}\right\rfloor\right)<0. $$
  Thus, we can conclude by Corollary \ref{cor:quasimaximum_rankdefect} that $\mU$ is quasi-maximum $h$-scattered if and only if $\Rdef(\C^{\perp})=0$.
\end{proof}

Thus, in the range of parameters satisfying \eqref{eq:condition_quasi}, we have by Theorem \ref{thm:quasimaximum_quasiMRD} that
the existence of quasi-maximum $h$-scattered subspaces corresponds to the existence of $\Fm$-linear quasi-MRD codes.

\begin{remark}
 When $h+1$ divides $km$, the condition in \eqref{eq:condition_quasi} is always satisfied, and from Theorem \ref{thm:quasimaximum_quasiMRD}  we recover Theorem \ref{thm:hscattered_MRD}.
\end{remark}

\subsection{Existence of nontrivial quasi-MRD codes}

Here we consider the known construction of quasi-maximum $h$-scattered subspaces, which allow to show the existence of nontrivial quasi-MRD codes. 
Up to our knowledge, there are so far only two constructions of 
quasi-maximum $h$-scattered subspaces, both when $h=1$. 

The first construction is obtained by some  linear blocking sets of R\'edei type. In general, these are special scattered $[m+1,3]_{q^m/q}$ systems. For $m=3$, one obtains a quasi-maximum scattered $[4,3]_{q^3/q}$ systems. However, for any $\C\in \Psi([\mU])$, $\C^\perp$ is a $[4,1,3]_{q^3/q}$ code. Quasi-MRD codes with these parameters are not so interesting. 

Very recently, special constructions of quasi-maximum scattered $q$-systems have been found for some special values of $q$. More specifically, these constructions have only been found for characteristics $2,3$ and $5$. 

\begin{theorem}[\textnormal{\cite[Theorem 5.1]{bartoli2021evasive}}]\label{thm:qmaxscattered}
Let $h$ be a nonnegative integer. Consider $q=p^{15h+s}$, with $\gcd(s, 15) =1$ if $p=2,3$ and  with $s=1$ if $p=5$. Then there  exist a scattered $[7,3]_{q^5/q}$  system.
\end{theorem}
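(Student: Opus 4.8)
## Proof Proposal for Theorem~\ref{thm:qmaxscattered}

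The plan is to exhibit an explicit scattered $[7,3]_{q^5/q}$ system for the stated values of $q$, rather than arguing abstractly. Since a $[7,3]_{q^5/q}$ system is a $7$-dimensional $\Fq$-subspace $\mU$ of $\Fm^3$ with $\langle\mU\rangle_{\Fm}=\Fm^3$ (here $m=5$), and scattered means $(1,1)$-evasive, i.e.\ every $1$-dimensional $\Fm$-subspace $\langle v\rangle_{\Fm}$ meets $\mU$ in $\Fq$-dimension at most $1$. Equivalently, by Corollary~\ref{cor:hscattered_sMRD} and Theorem~\ref{thm:evasive_generalizedweights}, we want any associated code $\C\in\Psi([\mU])$ to be $(k-1)=2$-MRD, i.e.\ $\dd_{\rk,2}(\C)=7-3+2=6$; equivalently $\dd_{\rk}(\C^\perp)\geq 3$ for a $[7,4]_{q^5/q}$ code. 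First I would fix the most convenient of these reformulations: scatteredness of $\mU$ says that no nonzero element of $\mU$ is an $\Fm$-multiple of another, which is the cleanest to check on an explicit parametrization.

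Next I would write $\mU$ in a form resembling the pseudoregulus / Moore-matrix constructions used elsewhere in the paper (cf.\ Proposition~\ref{prop:nearMRD_firstConstruction}), namely as a graph
$$\mU=\big\{\,(x,\ x^q+\delta x^{q^s}+\cdots,\ \beta x^{q^t}+\cdots)\ :\ x\in S\,\big\}$$
for a suitable $\Fq$-subspace $S\subseteq\F_{q^5}$ of $\Fq$-dimension close to $5$, augmented by a few extra generators to reach dimension $7$, with the scalars $\delta,\beta,\dots$ chosen in the prime field or in a controlled subfield so that the arithmetic condition on $q$ (the constraint on $s$ modulo $15$, resp.\ $s=1$ for $p=5$) is exactly what is needed. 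The scatteredness condition then translates into: a certain system of $q$-polynomial equations in $x$ has only the trivial solution, which in turn reduces to showing that an explicit low-degree polynomial (obtained by eliminating the $\Fm$-scalar $\lambda$ from $\lambda u = u'$) has no roots in $\Fm^\times$, or that a certain auxiliary curve/variety has no $\F_q$-points beyond the expected ones. This is where the hypotheses on $q$ enter: the nonexistence of roots will hold precisely when $\gcd$ conditions force certain norms or traces to be nonzero, or when a resultant is a unit.

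The main obstacle I expect is the elimination step and the root-counting: proving that the relevant polynomial (likely of degree around $q^2$ or so after clearing $q$-powers, but of bounded "$q$-degree") has no nontrivial roots in $\F_{q^5}$. I would handle this by reducing modulo the subfield structure — exploiting that $\gcd(s,15)=1$ makes the Frobenius $x\mapsto x^{q^s}$ generate the full Galois group of $\F_{q^5}/\F_q$ in the relevant sense, so no intermediate subfield obstruction arises — and then either invoke a Hasse--Weil–type bound to rule out points when $q=p^{15h+s}$ is large enough, or perform a direct finite case analysis for the finitely many small exceptional $q$. For $p=5$ the rigidity $s=1$ should make the computation essentially explicit. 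Throughout I would lean on Theorem~\ref{thm:evasive_generalizedweights} to freely pass between the geometric condition on $\mU$ and the coding-theoretic condition on $\C^\perp$, using whichever side makes a given subcase shortest.
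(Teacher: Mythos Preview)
The paper does not prove Theorem~\ref{thm:qmaxscattered}; it is quoted from \cite{bartoli2021evasive}. The only information the paper gives about the proof is the sentence immediately following the statement: the construction is carried out in $\F_{q^{15}}$ (identified with $\F_{q^5}^3$ as an $\F_{q^5}$-vector space) by exhibiting a $q$-linearized polynomial over $\F_{q^{15}}$ of $q$-degree~$7$ whose kernel has $\Fq$-dimension~$7$ and is scattered with respect to the $1$-dimensional $\F_{q^5}$-subspaces. So the actual object is $\mU=\ker f\subseteq\F_{q^{15}}$, not a graph of maps $\F_{q^5}\to\F_{q^5}^2$ with extra generators appended.

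Your proposal is not a proof but a template: the displayed formula for $\mU$ contains literal ellipses and unspecified parameters $\delta,\beta,s,t,S$, and you never commit to a construction. The heart of the matter is precisely finding those coefficients, and nothing in your outline indicates how to do so or why the arithmetic conditions on $q$ (the $\gcd(s,15)=1$ restriction for $p=2,3$, and $s=1$ for $p=5$) should emerge from your setup. Your verification strategy is also shaky: invoking Hasse--Weil would at best handle large $q$, but the theorem must hold for every $q$ of the stated form (e.g.\ $q=2$), and there is no indication that the residual ``finite case analysis'' is feasible or that the curve you would obtain is absolutely irreducible. The kernel-of-linearized-polynomial viewpoint used in \cite{bartoli2021evasive} is better suited here because scatteredness of $\ker f$ with respect to $\F_{q^5}$-lines becomes a concrete statement about $f(\alpha x)$ for $\alpha\in\F_{q^{15}}$, $x\in\F_{q^5}$, which can be analyzed directly rather than via point-counting bounds.
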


The proof of Theorem \ref{thm:qmaxscattered} is actually constructive, and its main idea is based on constructing a $q$-linearized polynomial over $\F_{q^{15}}$ of $q$-degree $7$, whose kernel has $\Fq$-dimension $7$ and that is scattered with respect to the $1$-dimensional $\F_{q^5}$-subspaces.

From a coding theoretic point of view, such $q$-systems give rise to nontrivial quasi-MRD codes.

\begin{corollary}
 Let $h$ be a nonnegative integer. Consider $q=p^{15h+s}$, with $\gcd(s, 15) =1$ if $p=2,3$ and  with $s=1$ if $p=5$. Then there  exist a $[7,4,3]_{q^5/q}$ quasi-MRD code. 
\end{corollary}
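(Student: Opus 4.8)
The plan is to read off this corollary directly from Theorem~\ref{thm:qmaxscattered} combined with the correspondence Theorem~\ref{thm:quasimaximum_quasiMRD}. First I would note that the scattered $[7,3]_{q^5/q}$ system $\mU$ furnished by Theorem~\ref{thm:qmaxscattered} is, in the terminology of Section~\ref{sec:quasi_quasi}, a quasi-maximum $1$-scattered system: indeed with $h=1$, $k=3$, $m=5$ we have $km=15$, which is not divisible by $h+1=2$, and $\lfloor\frac{km}{h+1}\rfloor=\lfloor 15/2\rfloor=7$, which is exactly the $\Fq$-dimension of $\mU$. Hence $\mU$ meets the bound $n\le\frac{km-\epsilon}{h+1}$ with equality (here $\epsilon=1$), so $\mU$ is quasi-maximum $h$-scattered by definition.

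Next I would check that the parameters $(k,m,h)=(3,5,1)$ satisfy the hypothesis~\eqref{eq:condition_quasi} of Theorem~\ref{thm:quasimaximum_quasiMRD}, namely that $km-(h+2)\lfloor\frac{km}{h+1}\rfloor<0$. This is the computation $15-3\cdot 7=15-21=-6<0$, which holds. Therefore Theorem~\ref{thm:quasimaximum_quasiMRD} applies: for any $\C\in\Psi([\mU])$, the system $\mU$ is quasi-maximum $1$-scattered if and only if $\C^\perp$ is quasi-MRD. Since $\mU$ is quasi-maximum $1$-scattered, the dual code $\C^\perp$ is quasi-MRD.

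Finally I would identify the parameters of $\C^\perp$. Here $\mU$ is an $n$-dimensional $\Fq$-subspace of $\Fm^k$ with $n=7$, $k=3$, $m=5$, so any $\C\in\Psi([\mU])$ is a $[7,3]_{q^5/q}$ code and its dual $\C^\perp$ is a $[7,4]_{q^5/q}$ code. Its minimum distance is forced by the quasi-MRD condition $\Rdef(\C^\perp)=0$, which gives $d=m-\lceil\frac{(n-k)m}{n}\rceil+1=5-\lceil\frac{20}{7}\rceil+1=5-3+1=3$. (Equivalently, by Theorem~\ref{thm:evasive_generalizedweights} the fact that $\mU$ is $1$-scattered means $\dd_{\rk}(\C^\perp)\ge h+2=3$, and the quasi-MRD property forces equality.) Thus $\C^\perp$ is a $[7,4,3]_{q^5/q}$ quasi-MRD code, which is exactly the assertion. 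I do not expect any genuine obstacle here; the only thing to be careful about is the bookkeeping of which of the two codes in the Galois-dual pair carries which set of parameters, and verifying the numerical inequality~\eqref{eq:condition_quasi} so that Theorem~\ref{thm:quasimaximum_quasiMRD} is legitimately applicable rather than just Corollary~\ref{cor:quasimaximum_rankdefect}.
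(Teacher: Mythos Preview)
Your proposal is correct and follows essentially the same route as the paper's proof: invoke Theorem~\ref{thm:qmaxscattered} to obtain the quasi-maximum scattered $[7,3]_{q^5/q}$ system, verify the numerical condition~\eqref{eq:condition_quasi} via $15-3\cdot 7=-6<0$, and apply Theorem~\ref{thm:quasimaximum_quasiMRD} to conclude that $\C^\perp$ is a $[7,4,3]_{q^5/q}$ quasi-MRD code. You include slightly more bookkeeping (explicitly checking that $\mU$ is quasi-maximum and computing the minimum distance), but the argument is the same.
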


\begin{proof}
  Let $\mU$ be a maximum scattered $[7,3]_{q^5/q}$ system as in Theorem \ref{thm:qmaxscattered}. Since $k=3$ and $m=5$, we have that 
  $$ \left(km-(h+2)\left\lfloor\frac{km}{h+1}\right\rfloor\right) = 15- 21 <0. $$
  Thus, by Theorem \ref{thm:quasimaximum_quasiMRD}, we have that $\C^\perp$ is a $[7,4,3]_{q^5/q}$ quasi-MRD code
\end{proof}

\begin{remark}
Note that the existence of nontrivial $\Fq$-linear quasi-MRD codes was shown in \cite[Example 11]{de2018weight}. However, here we considered quasi-MRD code that have the additional property of being also $\Fm$-linear. By construction, those shown in \cite{de2018weight} are not $\Fm$-linear in general. 
\end{remark}

\section*{Acknowledgements}
The research of G. Marino and R. Trombetti was supported by the Italian National Group for Algebraic and Geometric Structures and their Applications (GNSAGA - INdAM). 

\bibliographystyle{abbrv}
\bibliography{biblio}

\end{document}